\numberwithin{equation}{section}
\newtheorem{theorem}{Theorem}[section]
\newtheorem{lemma}[theorem]{Lemma}
\newtheorem{corollary}[theorem]{Corollary}
\theoremstyle{definition}
\newtheorem{proposition}[theorem]{Proposition}
\theoremstyle{remark}
\newtheorem{remark}[theorem]{Remark}
\newcommand{\R}{\mathbb{R}}
\newcommand{\E}{\mathbf{E}}
\newcommand{\BH}{\mathbb{H}}
\newcommand{\B}{\mathbf{B}}
\newcommand{\p}{\partial}
\newcommand{\ov}{\overline}
\newcommand{\be}{\begin{equation}}
	\newcommand{\ee}{\end{equation}}
\newcommand{\bse}{\begin{subequations}}
	\newcommand{\ese}{\end{subequations}}
\def\bea{\begin{eqnarray}}
	\def\eea{\end{eqnarray}}
\begin{document}

\title[Optimal Lower Bound for the Blow-up Rate]{Optimal Lower Bound for the Blow-up Rate of the Magnetic Zakharov System without the Skin Effect}
 \author{Zaihui Gan$^{1}$ , Yuchen Wang$^{2}$ , Yue Wang$^{1}$, Jialing Yu$^{1,3}$ }
\address{${}^1$ Center for Applied Mathematics, Tianjin University, \\ Tianjin 300072, China.}
\email{ganzaihui2008cn@tju.edu.cn.}
\email{wangy2017@tju.edu.cn.}
\address{${}^2$ School of Mathematics and Statistics, Central China Normal University\\ Wuhan 430079, Hubei, China}
\email{wangyuchen@mail.nankai.edu.cn}
\thanks{Zaihui Gan$^{1}$ is partially supported by the National Science Foundation of China No.11571254, \\
 Yuchen Wang$^{2}$ is partially supported by the National Science Foundation of China No.11831009 and Hubei Province Science and Technology Innovational Funding. }
 \address{${}^3$ Mathematisch Instituut-Universiteit Leiden, P.O. Box 9512, \\2300 RA Leiden, The Netherlands.}
\email{j.yu@math.leidenuniv.nl}
\date{}

\maketitle

\begin{abstract}
 We focus on the Cauchy problem of the magnetic Zakharov system in two-dimensional space:
$$
\left\{
\begin{array}{ll}
&\ i E_{1t}+\Delta E_1-n E_1+\eta E_2\left(E_1\overline{E_2}-\overline{E_1}E_2\right)=0,
\\[0.3cm]
&\ i E_{2t}+\Delta E_2-n E_2+\eta E_1\left(\overline{E_1}E_2-E_1\overline{E_2}\right)=0,
\\[0.3cm]
&\ n_t+\nabla\cdot\textbf{v}=0,
\\[0.3cm]
&\ \textbf{v}_t+\nabla n+\nabla\left(|E_1|^2+|E_2|^2\right)=0,
\\[0.3cm]
&\ (E_1,E_2,n,\textbf{v})(0,x)=(E_{10},E_{20},n_{0},\textbf{v}_{0})(x),
\end{array}
\right.\eqno(G-Z)
 $$
 which describes the spontaneous generation of a magnetic field without the skin effect in a cold plasma, where
$\eta>0$ is a physical constant coefficient. The two nonlinear terms~$E_2\left(E_1\overline{E_2}-\overline{E_1}E_2\right)$
 and $E_1\left(\overline{E_1}
E_2-E_1\overline{E_2}\right)$ generated by the cold magnetic field
 bring in a different difficulty from that for the classical Zakharov system.
Assuming the initial mass satisfies the following estimates:
\begin{gather*}
\frac{||Q||_{L^2(\mathbb{R}^2)}^2}{1+\eta}
<||E_{10}||_{L^2(\mathbb{R}^2)}^2+||E_{20}||_{L^2(\mathbb{R}^2)}^2
<\frac{||Q||_{L^2(\mathbb{R}^2)}^2}{\eta},
\end{gather*}
where $Q$ is the unique radially positive solution of the equation
$
-\Delta V+V=V^3 $, we prove that
there is a constant~$c>0$~ depending only on the initial data such that for~$t$ near $T$ (the blow-up time),
\begin{gather*}
\left\|\left(E_1,E_2,n,\textbf{v}\right)\right\|_{H^1(\mathbb{R}^2)\times H^1(\mathbb{R}^2)\times L^2(\mathbb{R}^2)\times
L^2(\mathbb{R}^2)}\geqslant\frac{c}{ T-t }.
\end{gather*}
As the magnetic coefficient $\eta$ tends to $0$, the blow-up rate recovers the result for the classical 2-D Zakharov system due to Merle \cite{25Frank}. For any size positive $\eta$,
under the current assumption on the initial mass, we give a mathematically rigorous justification for the fact that the presence of magnetic effects without the skin effect in the cold plasma does not change the optimal lower bound for the blow-up rates.\\

{\bf Keywords:} Blow-up rate; Magnetic Zakharov system; Skin effect; Optimal lower bound
\end{abstract}

{\bf Statements and Declarations:} No conflict of interest exists in the submission of this manuscript. No data was used for the research described in this manuscript.

\section{Introduction and main results}


The magnetic Zakharov system
$$
 \left\{
	\begin{array}{ll}		&i\textbf{E}_{t}+\nabla \nabla\cdot\textbf{E}- n \E- \alpha\nabla\times(\nabla\times\textbf{E})+i(\textbf{E}\land \textbf{B})=0,
\\[0.3cm]
		&\p_t n =  - \nabla \cdot  \mathbf{v},
\\[0.3cm]
		&\p_t  \mathbf{v }= - \nabla n - \nabla |\E|^2,
\\[0.3cm]
		&\Delta \textbf{B}-i\eta\nabla\times\nabla\times\left(\textbf{E}\land\overline{\textbf{E}}\right)+\mathbf{A}=0,
	\end{array}
\right.\eqno(I)
$$

 is proposed to describe the spontaneous generation of a magnetic field in a cold plasma \cite{9Kono}.
Here, $\E=(E_1,E_2,E_3) \in \mathbb{C}^3$ denotes the slowly varying complex amplitudes of the high-frequency electric field, $n$ the fluctuation of the electron density from its equilibrium, $\B$ the self-generated magnetic field in a cold plasma, $\mathbf{A} = \delta \B$, $\delta\leq 0$, $\eta>0$ and $\alpha\geq 1$ are physical constants.

It is worthing to point out that the Zakharov system (I) is a
Schr\"{o}dinger-wave coupled system with different scalings, 
and it keeps two conservation laws including the {\it total mass}
$$
\|\E\|_{L^2(\mathbb{R}^2)}^2  = \|E_1\|_{L^2(\mathbb{R}^2)}^2 + \|E_2\|_{L^2(\mathbb{R}^2)}^2 + \|E_3\|_{L^2(\mathbb{R}^2)}^2,\eqno(II)
$$
as well as the {\it total energy} 
$$
\left.
\begin{array}{ll}	\mathcal{H}:=&\displaystyle||\nabla\times\textbf{E}||_{L^2(\mathbb{R}^2)}^2
+||\nabla\cdot\textbf{E}||_{L^2(\mathbb{R}^2)}^2
\\[0.3cm]
&\displaystyle+
\frac{1}{2}||n||_{L^2(\mathbb{R}^2)}^2+\frac{1}{2}||\textbf{v}||_{L^2(\mathbb{R}^2)}^2   +\int_{\mathbb{R}^2}^{}n|\textbf{E}|^2 dx
\\[0.3cm]
&\displaystyle +\frac{\eta}{2}\int_{\mathbb{R}^2} \frac{1}{|\xi|^2-\delta}\left(\left|\xi\cdot\mathcal{F}(\textbf{E}
\times\overline{\textbf{E}})\right|^2-|\xi|^2\left|\mathcal{F}(\textbf{E}
\times\overline{\textbf{E}})\right|^2\right)d\xi.
\end{array}
\right.\eqno(III)
   $$
In the cold plasma, the term $\delta \B$ corresponds to the classical (collisionless) skin effect \cite{9Kono}. From a physical viewpoint, $\alpha$ is relative to the velocity of electrons and the plasma frequency. In the present paper, we focus on the case for $\delta=0$ and $\alpha=1$, that is, the skin effect would not be involved and the velocity of electrons increases synchronously with the frequency of plasma.

From a physical point of view, the two-dimensional case for $\textbf{E}$ is essential and of great importance. Let
$$
\E(t,x)=\left(E_1(t,x),E_2(t,x),0\right), \;\; x \in \R^2.
$$
Through standard computations, one obtains
$$
\B(t,x)=\left(0,0,B_3(t,x)\right) = \left(0,0, -i \eta\left(E_1 \ov{E_2} - \ov{E_1}E_2\right)\right)
$$
by the fact that $\nabla \cdot\left( \E \wedge \ov{\E}\right) = 0$ and  the vectorial identity $\Delta \E = \nabla (\nabla \cdot \E) - \nabla \times \nabla \times \E$.
Hence, the interaction term involving the electronic and magnetic fields is given by
$$
i \E \wedge \B = \eta \E \wedge \E \wedge \overline{\E} = \eta \left(E_2\left(E_1\overline{E_2}-\overline{E_1}E_2\right),E_1\left(\overline{E_1}E_2-
E_1\overline{E_2}\right),0\right),
$$
and (I) can be rewritten by the following two-dimensional magnetic Zakharov system:
$$
\left\{
\begin{array}{lll}
&\displaystyle i \p_t E_{1}+\Delta E_1-n E_1+ \eta E_2\left(E_1\overline{E}_2-\overline{E}_1E_2\right)=0,&\qquad (1.1-1)
\\[0.3cm]
		&\displaystyle i \p_t E_{2}+\Delta E_2-n E_2+\eta E_1\left(\overline{E}_1E_2-E_1\overline{E}_2\right)=0,&\qquad (1.1-2)
\\[0.3cm]
	&\displaystyle	\p_t n+\nabla\cdot \mathbf{v} =0,&\qquad (1.1-3)
\\[0.3cm]
	&\displaystyle	\p_t \mathbf{v} +\nabla n+\nabla\left(|E_1|^2+|E_2|^2\right)=0,
&\qquad (1.1-4)
	\end{array}
\right.\eqno(1.1)
   $$
 which describes the spontaneous generation of a magnetic field without the skin effect in a cold plasma \cite{9Kono}.
Here,
$\eta>0$ is a physical constant coefficient,~~$ E_1(t,x),E_2(t,x):\mathbb{R}^+\times\mathbb{R}^2\to\mathbb{C}, ~n(t,x):\mathbb{R}^+\times\mathbb{R}^2\to\mathbb{R}, ~\textbf{v}(t,x):\mathbb{R}^+\times\mathbb{R}^2\to\mathbb{R}^2$.
The initial data for (1.1) is given by
$$
\left\{
\begin{array}{lll}
&E_{1}(0,x)=E_{10}(x),~~&E_{2}(0,x)=E_{20}(x),
\\[0.3cm]
&n(0,x)=n_{0}(x),
~~&\textbf{v}(0,x)=\textbf{v}_{0}(x).
\end{array}
\right.\eqno(1.2)
   $$
Due to the identity (III), the Hamiltonian for (1.1) can be written as
$$
\displaystyle
\left.
\begin{array}{ll}
&\mathcal{H}(E_1,E_2,n,\mathbf{v})
 \\[0.3cm]
 &\displaystyle\quad = \| \nabla E_1\|_{L^2}^2 + \| \nabla E_2\|_{L^2}^2 + \frac{1}{2} \|n\|_{L^2}^2 + \frac{1}{2}\|\mathbf{v}\|_{L^2}^2
 \\[0.3cm]
 &\displaystyle\quad\quad + \int_{\R^2} n\left(|E_1|^2+|E_2|^2\right)dx - \frac{\eta}{2} \int_{\R^2} \left|E_1 \ov{E}_2 - E_2 \ov{E}_1\right|^2dx.
\end{array}
\right.\eqno(1.3)
$$
Clearly it is a well-defined functional on the energy space
 $
 \BH_1:=H^1(\mathbb{R}^2)\times H^1(\mathbb{R}^2) \times  L^{2}(\mathbb{R}^2)\times L^{2}(\mathbb{R}^2).
$

The blow-up dynamics of the two dimensional classical Zakharov system have been studied in detail by several authors, in particular, Glangetas and Merle \cite{22Frank,23Frank} and Merle \cite{24Frank,25Frank}.
For the Zakharov systems with magnetic field effect (I),  Laurey in \cite{20Laurey} proved the global existence of weak solutions for small initial data as well as local existence and uniqueness of smooth solutions in both two-dimensional and three-dimensional spaces. Based on Laurey's work \cite{20Laurey}, over the last decade, finite time blow-up dynamics for (I) were considered. Gan, Guo and Huang in \cite{28Gan} constructed a family of blow-up solutions in two-dimensional space and proved the existence of self-similar blow-up solutions. The instability and the concentration property of a class of periodic solution were also obtained. In \cite{27Gan}, the authors studied the Virial type blow-up solutions of the Cauchy problem for (I). Later, the authors in \cite{29Gan} established the space-time integral estimate of the blow-up rate for the finite time blow-up solutions to (I) in the three dimensional space. Note that (I) without the classical (collisionless) skin effect $(\delta=0)$ and $\alpha=1$ in two dimensional space reduces to system (1.1), these results for the system (I) are naturally true for system (1.1).\\
\indent Although for the Zakharov systems have an additional magnetic field, there have been some results on the well-posedness as well as some progress on the blowup dynamics. To our best knowledge, there is no estimate on the lower-bound of the finite time blow-up rate. The aim of this paper is to establish the (essentially optimal) lower bound of the blowup rate for the finite time blowup solutions to the system (1.1).\\
\indent Let us state a few preliminary results. Firstly, using
  the methods used in \cite{15Bourgain,17Bourgain,18Bourgain,16Bourgain}, one can obtain the local well-posedness of mild solutions in the energy space.\\

\begin{proposition} \label{1.1}
The two dimensional magentic Zakharov system (1.1) is locally well-posed in the energy space $H^1(\mathbb{R}^2)\times H^1(\mathbb{R}^2) \times  L^{2}(\mathbb{R}^2)\times L^{2}(\mathbb{R}^2) $.
That is, there exists a unique solution $\left(E_1,E_2,n,\mathbf{v}\right)$ satisfying
\begin{gather*}
		\left\|(E_1,E_2,n,\mathbf{v})(t)\right\|_{H^1(\mathbb{R}^2)\times H^1(\mathbb{R}^2) \times  L^{2}(\mathbb{R}^2)\times L^{2}(\mathbb{R}^2)}\le C, \;\; \forall t \in [0,T),
\end{gather*}
where $T$ is the maximal existence time of the solution and constant $C$ depends only on the initial data $\left( E_{10},E_{20},n_0,\mathbf{v}_0\right)$.
\end{proposition}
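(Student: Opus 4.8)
The plan is to prove Proposition~\ref{1.1} by the Fourier restriction norm ($X^{s,b}$) method of \cite{15Bourgain,16Bourgain,17Bourgain,18Bourgain}, after first reducing the wave part (1.1-3)--(1.1-4) to a single first--order dispersive equation. Taking the divergence of (1.1-4) and using (1.1-3) gives $\p_t^2 n=\Delta n+\Delta(|E_1|^2+|E_2|^2)$, while $\p_t(\nabla\times\mathbf{v})=0$ shows that the rotational part of $\mathbf{v}$ is conserved; I would treat that part as a fixed datum and reconstruct the irrotational part from $n$. Writing $D:=\sqrt{-\Delta}$ and
\be
N:=n+iD^{-1}\p_t n=n-iD^{-1}(\nabla\cdot\mathbf{v}),
\ee
a direct computation turns the wave system into the half--wave equation $i\p_t N-DN=D(|E_1|^2+|E_2|^2)$ with $n=\mathrm{Re}\,N$, so that (for irrotational $\mathbf{v}$) the system is equivalent to the coupled Duhamel system
\be
E_j(t)=e^{it\Delta}E_{j0}-i\int_0^t e^{i(t-s)\Delta}\Big((\mathrm{Re}\,N)E_j-\eta\,\mathcal C_j(E)\Big)\,ds,
\ee
\be
N(t)=e^{-itD}N_0-i\int_0^t e^{-i(t-s)D}\,D\big(|E_1|^2+|E_2|^2\big)\,ds,
\ee
where $\mathcal C_1(E)=E_2(E_1\ov{E}_2-\ov{E}_1E_2)$ and $\mathcal C_2(E)=E_1(\ov{E}_1E_2-E_1\ov{E}_2)$.

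To run the contraction I would work in the Bourgain spaces $X_S^{s,b}$ and $X_W^{s,b}$ carrying the weights $\langle\xi\rangle^s\langle\tau+|\xi|^2\rangle^b$ and $\langle\xi\rangle^s\langle\tau+|\xi|\rangle^b$ adapted to the Schr\"odinger and half--wave propagators, seeking $(E_1,E_2,N)$ in $X_S^{1,b}\times X_S^{1,b}\times X_W^{0,b}$ for some $b>\tfrac12$ and with a smooth cutoff localizing time to $[0,T]$. Together with the standard energy and Duhamel estimates for these spaces, the fixed point closes once one has the Schr\"odinger--wave product bound
\be
\|(\mathrm{Re}\,N)\,E\|_{X_S^{1,b-1}}\lesssim T^{\theta}\|N\|_{X_W^{0,b}}\|E\|_{X_S^{1,b}},
\ee
the wave--source bound
\be
\big\|D(E\,\ov{E'})\big\|_{X_W^{0,b-1}}\lesssim T^{\theta}\|E\|_{X_S^{1,b}}\|E'\|_{X_S^{1,b}},
\ee
and a cubic bound controlling the Duhamel contribution of $\mathcal C_j(E)$ in $X_S^{1,b}$ by $T^{\theta}\prod_k\|E_k\|_{X_S^{1,b}}$, each carrying a positive power $\theta>0$ of $T$ that supplies the contraction smallness.

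The two bilinear estimates for the Schr\"odinger--wave interactions are the classical Zakharov estimates, and among them the wave--source estimate is the delicate one: the derivative $D$ in $D(|E_1|^2+|E_2|^2)$ causes a loss that is recovered only through the transversality of the characteristic surfaces $\tau+|\xi|^2=0$ and $\tau\pm|\xi|=0$. This dyadic resonance analysis, in the spirit of Ginibre--Tsutsumi--Velo, is the main technical obstacle of the scheme. By contrast, the cubic terms $\mathcal C_j$ produced by the magnetic coupling carry no derivative and are $L^2$--critical in dimension two, hence energy--subcritical at the $H^1$ regularity of the energy space; their trilinear estimate follows from the two--dimensional Schr\"odinger Strichartz inequality for the admissible pair $(4,4)$ (equivalently from $X_S^{0,b}\hookrightarrow L^4_{t,x}$ for $b>\tfrac12$), exactly as in the local theory of the cubic nonlinear Schr\"odinger equation. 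The only additional bookkeeping is to track the dependence of the constants on the physical parameter $\eta$.

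With the three estimates in hand, the Duhamel map is a contraction on a ball of $X_S^{1,b}\times X_S^{1,b}\times X_W^{0,b}$ for $T$ small depending only on $\|(E_{10},E_{20},N_0)\|_{H^1\times H^1\times L^2}$, which produces a unique fixed point; applying the same estimates to differences of two solutions yields uniqueness and continuous dependence on the data. Finally I would recover the original unknowns by setting $n=\mathrm{Re}\,N\in L^2$ and reconstructing $\mathbf{v}\in L^2$ from its conserved rotational part together with $\nabla\cdot\mathbf{v}=-\p_t n=-D\,\mathrm{Im}\,N$, and then iterate the local construction up to the maximal existence time $T$, which gives the asserted bound of the solution in $\BH_1$ on $[0,T)$.
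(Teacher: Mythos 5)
Your proposal follows essentially the same route the paper indicates: the paper gives no proof of Proposition~\ref{1.1} beyond citing the Fourier restriction norm method of Bourgain--Colliander and Ginibre--Tsutsumi--Velo, and your sketch is a faithful elaboration of exactly that scheme (half-wave reduction of the acoustic part, $X^{s,b}$ contraction with the classical Zakharov bilinear estimates, and Strichartz/$L^4$ control of the extra $H^1$-subcritical cubic magnetic terms). The outline is sound and consistent with the intended argument.
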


Next, following Gan, Guo, Han and Zhang \cite{27Gan}, we can establish easily the virial-type blow-up solution for the Cauchy problem (1.1)-(1.2).\\
\begin{proposition} \label{1.2}
Let $\eta>0$. Suppose that for all time, the solutions $\left(E_1,E_2,n,\mathbf{v}\right)(t)$ to the Cauchy problem (1.1)-(1.2) in $\mathbb{R}^2$ are radially symmetric functions and $\mathcal{H}\left(E_{10},E_{20},n_{0},\mathbf{v}_{0}\right) <0$. The following alternative holds:
\\[0.3cm]
\indent (i)~~$\left(E_1,E_2,n,\mathbf{v}\right)(t)$ blows up in finite time,\\

\indent (ii)~~$\left(E_1,E_2,n,\mathbf{v}\right)(t)$ blows up in the energy space $H^1(\mathbb{R}^2)\times H^1(\mathbb{R}^2) \times  L^{2}(\mathbb{R}^2)\times L^{2}(\mathbb{R}^2)$ at infinity. That is, $\left(E_1,E_2,n,\mathbf{v}\right)(t)$ is defined for all $t$ and
$$\lim\limits_{t\rightarrow +\infty}\left\|(E_1,E_2,n,\mathbf{v})\right\|_{H^1(\mathbb{R}^2)\times H^1(\mathbb{R}^2) \times  L^{2}(\mathbb{R}^2)\times L^{2}(\mathbb{R}^2)}=+\infty.$$ \end{proposition}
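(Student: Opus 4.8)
The plan is to run a virial (variance) argument of Glassey--Glangetas--Merle type, driven by the negativity of the Hamiltonian $\mathcal H$ and the conservation laws. Writing $|E|^2:=|E_1|^2+|E_2|^2$, I introduce the variance $G(t):=\intr|x|^2|E|^2\,dx$ and the virial functional $A(t):=\mathrm{Im}\intr\big(\ov E_1\,x\cdot\nabla E_1+\ov E_2\,x\cdot\nabla E_2\big)\,dx$. From (1.1-1)--(1.1-2) and the continuity identity $\p_t|E|^2=-2\,\nabla\cdot\mathrm{Im}(\ov E\nabla E)$ (the potential and magnetic contributions drop out, the latter because $E_1\ov E_2-\ov E_1E_2$ is purely imaginary) one gets $G'(t)=4A(t)$. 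The goal is to differentiate $A$ once more, feed in the conservation of $\mathcal H$, and reach a differential inequality $\frac{d}{dt}\big(A+(\text{wave correction})\big)\le 2\mathcal H<0$.

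The decisive new step is the treatment of the two magnetic cubic terms. Differentiating $A(t)$ and substituting $iE_{jt}=-\Delta E_j+nE_j-\eta(\text{mag})_j$, the purely Schr\"odinger part produces $2\|\nabla E\|_{L^2}^2-\intr(x\cdot\nabla n)|E|^2\,dx$ exactly as in the classical case (in two dimensions $\mathrm{Re}\intr\Delta E\,(x\cdot\nabla\ov E)\,dx$ vanishes). Writing $w:=E_1\ov E_2-\ov E_1E_2$, I claim the magnetic nonlinearity contributes precisely $-\eta\intr|w|^2\,dx$: the ``mass-type'' piece yields $-2\eta\intr|w|^2$, while the ``dilation'' piece reduces, after using $\mathrm{Re}(w\,z)=\tfrac14\,x\cdot\nabla|w|^2$ for $z=E_2\,x\cdot\nabla\ov E_1-E_1\,x\cdot\nabla\ov E_2$ and integrating by parts, to $+\eta\intr|w|^2$. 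When $2\|\nabla E\|_{L^2}^2$ is replaced by $\mathcal H$ through (1.3), the magnetic energy $-\tfrac{\eta}{2}\intr|w|^2$ inside $\mathcal H$ cancels this $-\eta\intr|w|^2$ exactly. This cancellation is the a priori estimate ``corresponding for the extra terms'': at the level of the virial identity the magnetic field is invisible, and one is left with $A'(t)=2\mathcal H-\|n\|_{L^2}^2-\|\mathbf v\|_{L^2}^2+\intr n\,(x\cdot\nabla|E|^2)\,dx$.

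It remains to control the Schr\"odinger--wave coupling $\intr n\,(x\cdot\nabla|E|^2)\,dx$. Here I use the wave block (1.1-3)--(1.1-4): substituting $\nabla|E|^2=-\mathbf v_t-\nabla n$ and integrating by parts converts this term into $\|n\|_{L^2}^2-\frac{d}{dt}\intr n\,(x\cdot\mathbf v)\,dx+\intr n_t\,(x\cdot\mathbf v)\,dx$. Because the data is radial, $\mathbf v$ stays a gradient field, and the same two-dimensional identity that killed the Laplacian term gives $\intr(\nabla\cdot\mathbf v)(x\cdot\mathbf v)\,dx=0$, so that $\intr n_t\,(x\cdot\mathbf v)\,dx$ vanishes after using $n_t=-\nabla\cdot\mathbf v$. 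Collecting everything, the modified functional $\mathcal A(t):=A(t)+\intr n\,(x\cdot\mathbf v)\,dx$ obeys the clean identity $\mathcal A'(t)=2\mathcal H-\|\mathbf v(t)\|_{L^2}^2\le 2\mathcal H<0$.

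Finally, integrating yields $\mathcal A(t)\le\mathcal A(0)+2\mathcal H\,t\to-\infty$. I argue the dichotomy by contradiction: if alternative (i) fails the solution is global, and if moreover its $\BH_1$-norm stayed uniformly bounded, then Proposition \ref{1.1} together with propagation of finite variance under that bound (e.g.\ $G(t)^{1/2}\le G(0)^{1/2}+2\int_0^t\|\nabla E\|_{L^2}\,ds$, with an analogous estimate for the wave field) should prevent $\mathcal A(t)$ from decreasing linearly to $-\infty$, a contradiction; hence a global solution must satisfy $\|(E_1,E_2,n,\mathbf v)(t)\|_{\BH_1}\to\infty$, which is alternative (ii). I expect the main obstacle to be precisely the coupling term $\intr n\,(x\cdot\nabla|E|^2)\,dx$: in contrast with the focusing NLS, the wave components $n$ and $\mathbf v$ carry their own, a priori uncontrolled (possibly infinite) variance, so both the correction $\intr n\,(x\cdot\mathbf v)\,dx$ and the gradient structure of $\mathbf v$ inherited from radial symmetry are indispensable. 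It is this Schr\"odinger--wave interaction---\emph{not} the magnetic nonlinearity, which cancels identically in the virial identity---that prevents upgrading the conclusion to clean finite-time blow-up and forces the weaker ``finite-time or infinite-time'' alternative.
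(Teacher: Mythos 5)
Your formal derivation of the virial identity is correct, and it is in fact the system-specific heart of the matter: the magnetic cubic terms contribute exactly $-\eta\intr|w|^2\,dx$ to the dilation identity (mass part $-2\eta\intr|w|^2$, dilation part $+\eta\intr|w|^2$), this cancels against the magnetic part of $\mathcal{H}$, radial symmetry kills $\intr(\nabla\cdot\mathbf{v})(x\cdot\mathbf{v})\,dx$, and one lands on $\mathcal{A}'(t)=2\mathcal{H}-\|\mathbf{v}(t)\|_{L^2}^2\le 2\mathcal{H}<0$. Note, for reference, that the present paper never proves Proposition \ref{1.2} at all: it is quoted from \cite{27Gan}, whose argument is Merle's virial scheme \cite{24Frank,25Frank} adapted to these extra terms, so the identity you found is indeed the right one. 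The gap is entirely in how you conclude. First, the proposition assumes data only in $\BH_1$, so $\intr|x|^2|E|^2\,dx$ and $\intr n\,(x\cdot\mathbf{v})\,dx$ need not be finite and all your identities are formal; moreover the ``propagation of finite variance'' you invoke for the wave field is not available --- differentiating $\||x|\mathbf{v}\|_{L^2}^2$ produces weighted derivative terms such as $\intr(n+|E|^2)\,|x|^2\,\nabla\cdot\mathbf{v}\,dx$, which do not close in $\BH_1$.

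More fatally, even granting finite variance, the contradiction you aim for does not exist. Assume a global solution uniformly bounded by $C_0$ in $\BH_1$: your own estimate gives $G(t)^{1/2}\le G(0)^{1/2}+Ct$, hence $|A(t)|\le C_0\,G(t)^{1/2}\lesssim 1+t$, and similarly for the correction $\intr n(x\cdot\mathbf{v})\,dx$; i.e.\ boundedness only yields that $|\mathcal{A}(t)|$ grows \emph{at most linearly}. But $\mathcal{A}(t)\le\mathcal{A}(0)+2\mathcal{H}t$ also decays only linearly, and two linear rates with unrelated constants are perfectly compatible --- nothing prevents $\mathcal{A}$ from drifting to $-\infty$ at speed $2|\mathcal{H}|$ while the solution stays bounded. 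What actually produces the contradiction in Merle's argument (and in \cite{27Gan}) is localization: replace $x$ by $\nabla\chi_R$ with $\nabla\chi_R=x$ on $\{|x|\le R\}$ and $|\nabla\chi_R|\le CR$, and work with $F_R(t)=\mathrm{Im}\intr\ov{E}\,\nabla\chi_R\cdot\nabla E\,dx+\intr n\,\nabla\chi_R\cdot\mathbf{v}\,dx$. This is finite for $\BH_1$ data and, crucially, \emph{bounded} under the boundedness assumption, $|F_R(t)|\le CR\,(C_0+C_0^2)$, because the weight is bounded; a bounded quantity cannot satisfy $F_R(t)\le F_R(0)+\mathcal{H}t$. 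The price is error terms supported in $\{|x|\ge R\}$, and this is where radial symmetry enters a second time, through the Strauss decay $|E_j(x)|^2\le C|x|^{-1}\|E_j\|_{L^2}\|\nabla E_j\|_{L^2}$ and absorption into the good-sign terms $-2\intr(1-\chi_R'')|\nabla E|^2\,dx$, $-\tfrac12\intr(2-\Delta\chi_R)\,n^2\,dx$, $-\|\mathbf{v}\|_{L^2}^2$, yielding $F_R'(t)\le 2\mathcal{H}+\varepsilon(R)$ with $\varepsilon(R)\to0$ and constants depending only on conserved quantities, uniformly in $t$. This bound-independent form also upgrades the conclusion: for a global solution one gets $\|(E_1,E_2,n,\mathbf{v})(t)\|_{\BH_1}\gtrsim\sqrt{t}$, which is the full limit in alternative (ii); your scheme, even if its contradiction worked, would only exclude ``global and uniformly bounded,'' i.e.\ give $\limsup=\infty$ rather than $\lim=\infty$.
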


\noindent With these results, it is natural to consider more quantitative descriptions on the behaviour of
 $\left\|(E_1,E_2,n,\mathbf{v})(t)\right\|_{H^1(\mathbb{R}^2)\times H^1(\mathbb{R}^2) \times  L^{2}(\mathbb{R}^2)\times L^{2}(\mathbb{R}^2)}$ as $t$ near $T$, where $T<\infty$ is the blow-up time. Compared with the classical Zakharov system, (1.1) contains two extra cubic coupling terms.
The two extra terms~$E_2\left(E_1\overline{E_2}-\overline{E_1}E_2\right)$
 and $E_1\left(\overline{E_1}
E_2-E_1\overline{E_2}\right)$ which are physically generated by the cold magnetic field (without the skin effect), do
 bring a different difficulty from that for the classical Zakharov system.
 Assuming the initial mass satisfies the following estimates:

\begin{gather*}
\frac{||Q||_{L^2(\mathbb{R}^2)}^2}{1+\eta}
<||E_{10}||_{L^2(\mathbb{R}^2)}^2+||E_{20}||_{L^2(\mathbb{R}^2)}^2
<\frac{||Q||_{L^2(\mathbb{R}^2)}^2}{\eta},
\end{gather*}
where $Q$ is the unique radially positive solution of the equation
$
-\Delta V+V=V^3,$ we prove in the present paper that
there is a constant~$c>0$~ depending only on the initial data such that for~$t$ near $T$ (the blow-up time),
\begin{gather*}
\left\|\left(E_1,E_2,n,\textbf{v}\right)\right\|_{H^1(\mathbb{R}^2)\times H^1(\mathbb{R}^2)\times L^2(\mathbb{R}^2)\times
L^2(\mathbb{R}^2)}\geqslant\frac{c}{ T-t }.
\end{gather*}
The main result of the paper can be described by the following:\\
 \begin{theorem} \label{1.3}
Let $\left(E_1,E_2,n,\mathbf{v}\right)(t)$ be the finite time blow-up solution of the Cauchy problem (1.1)-(1.2), and $T<\infty$ be the blow-up time. Suppose that the initial data $(E_{10},E_{20})$ satisfies
$$\frac{ \|Q\|_{L^2(\mathbb{R}^2)}^2}{1+\eta}
<\|E_{10}\|_{L^2(\mathbb{R}^2)}^2+\|E_{20}\|_{L^2(\mathbb{R}^2)}^2
<\frac{\|Q\|_{L^2(\mathbb{R}^2)}^2}{\eta},\eqno(1.4)$$
where $Q$ is the unique radial positive solution of
$$-\Delta V+V=V^3,\eqno(1.5)$$
then there exist constants $c>0,\tilde c>0$ depending only on initial data, such that as $t$ near $T$,\\
$(1)$
$$
\left\|(E_1,E_2,n,\textbf{v})(t)\right\|_{H^1(\mathbb{R}^2)\times H^1(\mathbb{R}^2)\times L^2(\mathbb{R}^2)\times L^2(\mathbb{R}^2)}\geqslant\frac{c}{T-t},\eqno(1.6)
$$
$$
\left(\left\|\nabla E_1(t)\right\|_{L^2(\mathbb{R}^2)}^2+\left\|\nabla E_2(t)\right\|_{L^2(\mathbb{R}^2)}^2\right)^{\frac{1}{2}}\geqslant\frac{\tilde c}{T-t},\eqno(1.7)
$$
$$
||n(t)||_{L^2(\mathbb{R}^2)}\geqslant\frac{\tilde c}{T-t}.\eqno(1.8)
$$
\noindent More precisely,
\\[0.2cm]
\noindent $(2)$
$$
\left(||\nabla E_1(t)||_{L^2(\mathbb{R}^2)}^2+||\nabla E_2(t)||_{L^2(\mathbb{R}^2)}^2\right)^{\frac{1}{2}}\qquad\qquad\qquad\qquad\qquad\qquad
$$
$$\geqslant
\frac{\tilde c}{\left(||E_{10}||_{L^2(\mathbb{R}^2)}^2+||E_{20}||_{L^2(\mathbb{R}^2)}^2
-\frac{||Q||_{L^2(\mathbb{R}^2)}^2}{1+\eta}\right)^{\frac{1}{2}}}\frac{1}{T-t},\eqno(1.9)
$$
\\[0.1cm]
$$
\|n(t)\|_{L^2(\mathbb{R}^2)}\geqslant\frac{ \tilde c}{\left(||E_{10}||_{L^2(\mathbb{R}^2)}^2+||E_{20}||_{L^2(\mathbb{R}^2)}^2
-\frac{||Q||_{L^2(\mathbb{R}^2)}^2}{1+\eta}\right)^{\frac{1}{2}}}\frac{1}{T-t}.\eqno(1.10)
$$
\end{theorem}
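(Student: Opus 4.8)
The plan is to collapse the five estimates (1.6)--(1.10) onto a single scalar quantity and to prove for it a differential inequality whose coefficient is governed by (1.4). Throughout write $\rho=|E_1|^2+|E_2|^2$, $G(t)=\|\nabla E_1\|_{\LR}^2+\|\nabla E_2\|_{\LR}^2$, $N(t)=\|n\|_{\LR}^2$, and $M_0=\|E_{10}\|_{\LR}^2+\|E_{20}\|_{\LR}^2$ (conserved by (II)). First I would assemble the variational inputs. The sharp vector Gagliardo--Nirenberg inequality attached to the ground state $Q$ of (1.5) gives $\int_{\R^2}\rho^2\,dx\le 2\|Q\|_{\LR}^{-2}M_0\,G$, and since $|E_1\ov E_2-E_2\ov E_1|\le\rho$ pointwise, the magnetic term obeys $\tfrac{\eta}{2}\int_{\R^2}|E_1\ov E_2-E_2\ov E_1|^2\,dx\le \eta\|Q\|_{\LR}^{-2}M_0\,G$; the upper bound in (1.4) makes this strictly subcritical, $\eta\|Q\|_{\LR}^{-2}M_0<1$. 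Completing the square in the energy (1.3) via $\tfrac12 N+\int n\rho=\tfrac12\|n+\rho\|_{\LR}^2-\tfrac12\|\rho\|_{\LR}^2$ produces the coercivity identity
\[ \tfrac12\|n+\rho\|_{\LR}^2+\tfrac12\|\V\|_{\LR}^2=\mathcal H_0-G+\tfrac12\|\rho\|_{\LR}^2+\tfrac{\eta}{2}\int_{\R^2}|E_1\ov E_2-E_2\ov E_1|^2\,dx\le \mathcal H_0+\alpha\,G, \]
where $\alpha:=(1+\eta)\|Q\|_{\LR}^{-2}M_0-1=\tfrac{1+\eta}{\|Q\|_{\LR}^2}\big(M_0-\tfrac{\|Q\|_{\LR}^2}{1+\eta}\big)$. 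The lower bound in (1.4) is precisely the condition $\alpha>0$, and $\alpha$ is the constant that will surface in (1.9)--(1.10).

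Next I would establish that all the blow-up quantities are comparable. The coercivity identity bounds $\|n+\rho\|$ and $\|\V\|$ by $(\mathcal H_0+\alpha G)^{1/2}$, so $N\le 2\|n+\rho\|^2+2\|\rho\|^2\lesssim G$. Conversely, inserting $\tfrac{\eta}{2}\int|E_1\ov E_2-E_2\ov E_1|^2\le\eta\|Q\|_{\LR}^{-2}M_0 G$ and $|\int n\rho|\le N^{1/2}(2\|Q\|_{\LR}^{-2}M_0 G)^{1/2}$ into (1.3) and exploiting the strict subcriticality $\eta\|Q\|_{\LR}^{-2}M_0<1$, I treat the outcome as a quadratic in $G^{1/2}$ to get the reverse bound $G\lesssim N$. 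Hence near $T$ one has $N\simeq G\simeq\big\|(E_1,E_2,n,\V)\big\|_{\BH_1}^2$ with constants depending only on the data, so (1.6), (1.7) and (1.8) are mutually equivalent and it suffices to produce a single rate.

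The heart of the matter is a differential inequality. Differentiating $G$ and using (1.1-1)--(1.1-2) gives $G'=-2\sum_{j=1,2}\operatorname{Im}\langle\,nE_j-\eta(\text{cubic})_j,\ \Delta E_j\,\rangle$, whose coupling part equals $-2\int_{\R^2}\nabla n\cdot\mathbf J\,dx$ with $\mathbf J=\operatorname{Im}(\ov E_1\nabla E_1+\ov E_2\nabla E_2)$; this is dangerous because $\nabla n\notin\LR$. Here I would use the exact local conservation law $\p_t\rho+2\nabla\cdot\mathbf J=0$ --- the two magnetic currents cancel identically, which is the algebraic payoff of the special form of the extra terms --- to integrate by parts and trade this piece for $-\tfrac{d}{dt}\int n\rho$ plus a remainder. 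Absorbing $\int n\rho$ into the modified functional $\mathcal F:=G+\int n\rho\simeq G$, every surviving term then carries one factor of the wave energy (hence $\lesssim(\mathcal H_0+\alpha G)^{1/2}$ by the first step) against Gagliardo--Nirenberg factors, giving $|\mathcal F'|\le C\,\mathcal F^{3/2}$. Since $\mathcal F\to\infty$ as $t\to T$, integrating $(\mathcal F^{-1/2})'\ge-\tfrac{C}{2}$ from $t$ to $T$ yields $\mathcal F(t)^{1/2}\ge \tfrac{2}{C}(T-t)^{-1}$, which by the comparability above is exactly (1.6)--(1.8). To sharpen to (1.9)--(1.10) I would not discard the coercivity identity but read it as $\alpha\,G\ge \tfrac12\|n+\rho\|_{\LR}^2+\tfrac12\|\V\|_{\LR}^2-\mathcal H_0$: running the same rate argument on the wave energy (equivalently on $N$) gives a lower bound $\ge c\,(T-t)^{-2}$ with $c$ independent of $\alpha$, and dividing by $\alpha$ gives $G\ge \tfrac{c}{\alpha}(T-t)^{-2}$, i.e. $G^{1/2}\ge\tfrac{\sqrt c}{\sqrt\alpha}(T-t)^{-1}$ with $\alpha^{-1/2}\propto(M_0-\|Q\|_{\LR}^2/(1+\eta))^{-1/2}$, and likewise for $N$.

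I expect the genuine obstacle to be the coupling term in the differential inequality. Even after the cancellation $\p_t\rho+2\nabla\cdot\mathbf J=0$ removes the worst $\nabla n$ contribution, the leftover pieces --- the term $\int\V\cdot\nabla\rho$ and the two cubic magnetic terms tested against $\Delta E_j$ --- formally cost one derivative beyond $\BH_1$. Closing $|\mathcal F'|\le C\mathcal F^{3/2}$ therefore forces me either to carry the a priori estimate at one higher regularity and pass to the limit by density, or to invoke the local smoothing/$X^{s,b}$ bounds underlying Proposition \ref{1.1}. Verifying that these leftover terms reassemble into wave-energy times Gagliardo--Nirenberg factors, with no uncontrolled derivative surviving and with the subcriticality $\eta\|Q\|_{\LR}^{-2}M_0<1$ keeping $\mathcal F$ comparable to $G$, is the crux of the argument and is precisely where the two new magnetic nonlinearities must be tamed.
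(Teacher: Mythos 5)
Your overall strategy is genuinely different from the paper's, and unfortunately it founders exactly where you yourself locate the crux. The paper does not prove any differential inequality for an energy functional; it follows Merle's scheme: rescale by $\lambda(t)=\|(E_1,E_2,n,\mathbf v)(t)\|_{\BH_1}$ (Proposition 2.3), prove via Sobolev bounds, non-vanishing and concentration--compactness that the rescaled solution stays of size $A$ on a rescaled time interval of uniform length $\theta_0$ (Theorem 3.1), and convert $\lambda(t)(T-t)\ge\theta_0$ into (1.6)--(1.8); the refined constants (1.9)--(1.10) come from the quantitative lower bound $\theta_0\gtrsim\bigl(M_0-\|Q\|_{L^2}^2/(1+\eta)\bigr)^{-1/2}$ of Proposition 4.1. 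The decisive quantitative input there is Proposition 3.17, $\liminf_{t\to T}\int_0^{\theta(t)}\|\tilde{\mathbf v}(s)\|_{L^2}\,ds\ge c$, obtained by testing the wave equation $\tilde n_s=-\nabla\cdot\tilde{\mathbf v}$ against a fixed test function and playing the weak limit $N'\ne 0$ of $\tilde n(\theta(t))$ against the boundedness of $\tilde n(0)$ --- a finite-speed-of-propagation mechanism with no Gronwall step anywhere.

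The gap in your route is that the inequality $|\mathcal F'|\le C\,\mathcal F^{3/2}$ cannot be closed in $\BH_1$, and your two proposed escapes do not repair it. Your cancellation $\p_t\rho+2\nabla\cdot\mathbf J=0$ is correct (the two magnetic currents do cancel in the sum, as in (2.13)--(2.15) of the paper), and it removes the $\int\nabla n\cdot\mathbf J$ term, but the surviving term $\int\mathbf v\cdot\nabla\rho$ requires $\|\nabla\rho\|_{L^2}\lesssim\|E\|_{L^4}\|\nabla E\|_{L^4}$, i.e.\ control of $\nabla E_j$ in $L^4(\R^2)$, which is strictly beyond $H^1$ (the borderline $\|E\|_{L^\infty}$ route fails logarithmically in two dimensions). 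Carrying the estimate at higher regularity does not help: the constant $C$ would then depend on norms that may blow up faster than $(T-t)^{-1}$, and such an inequality does not pass to $\BH_1$ solutions by density. This derivative loss in the Schr\"odinger--wave coupling is precisely the reason Merle introduced the rescaling/compactness argument for the $2$D Zakharov system, and the present paper inherits that necessity. Separately, your derivation of (1.9)--(1.10) asserts that the rate constant for the wave energy is independent of $\alpha=(1+\eta)\|Q\|_{L^2}^{-2}M_0-1$; that is exactly what needs proof, and in the paper it is extracted from the Hamiltonian identity giving $\|\tilde{\mathbf v}\|_{L^2}\lesssim A\sqrt{\alpha}$ combined with Proposition 3.17 --- your completed square and the identification of $\alpha$ are the right ingredients, but they must be fed into the rescaled-time estimate of $\theta_0$, not into a Gronwall constant.
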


\indent The assumption on the lower bound for the initial mass is natural particularly when $\eta$ is small as it is basically the minimal mass needed for blow-up to occur. Indeed,
as the magnetic coefficient $\eta$ tends to $0$, the blow-up rate recovers the result for the classical 2-D Zakharov system due to Merle \cite{25Frank}. On the other hand, for any size positive $\eta$,
the assumption on the upper bound is not known to be optimal nor to be necessary, but when $\eta$ is sufficiently small this assumption is automatic for a giving finite mass initial data.
For large $\eta$ our result is in some sense more intruging. It provides a mass band in which one can get
more precise information for blow-ups. Whether or not such a statement can be generalized to more general (likely multiple bubble blow-ups) mass levels remains to be a fascinating open problem.
Under the current assumption on the initial mass, we give a mathematically rigorous justification for the fact that the presence of magnetic effects without the skin effect in the cold plasma does not change the optimal lower bound for the blow-up rates.
\begin{remark}\label{1.4}
In \cite{28Gan}, Gan, Guo and Huang constructed a family of blow-up solutions of the Cauchy problem (1.1)-(1.2):
$$
\left\{
\begin{array}{ll}
E_1(t,x)=\frac{\omega}{T-t}e^{i\left(\theta+\frac{|x|^2}{4(-T+t)}
-\frac{\omega^2}{ -T+t}\right)}\frac{\tilde
P(\frac{x\omega}{T-t})}{\sqrt{2}},
\\[0.3cm]
\ E_2(t,x)=-i E_1(t,x),\\[0.3cm]
\ n(t,x)=\frac{\omega^2}{(T-t)^2}\tilde N\left(\frac{x\omega}{T-t}\right),
\end{array}
\right.\eqno(1.11)
$$
where $\tilde P(x)=\tilde P(|x|),\tilde N(x)=\tilde N(|x|)$ are radial functions on $\mathbb{R}^2$, $\theta\in\mathbb{R}$ and $\omega>0$. Let $\tilde P=\frac{P}{\left(1+\eta \right)^{\frac{1}{2}}},~~\tilde N=\frac{N}{1+\eta}$, then $(P,N)$ satisfies
$$
\left\{
\begin{array}{ll}
\Delta P-P+\frac{\eta}{\eta+1}P^3=\frac{1}{\eta+1}NP,
\\[0.3cm]
\frac{1}{\omega^2} (r^2 N_{rr}+6rN_{r}+6N)-\Delta N=\Delta|P|^2.
\end{array}
\right.\eqno(1.12)
$$
\\
Direct calculation yields
\\
$$
\left\{
\begin{array}{ll}
||\nabla E_1(t)||_{L^2(\mathbb{R}^2)}=\frac{\omega}{T-t}||\nabla\tilde P||_{L^2(\mathbb{R}^2)},
\\[0.3cm]
||\nabla E_2(t)||_{L^2(\mathbb{R}^2)}=\frac{\omega}{T-t}||\nabla\tilde P||_{L^2(\mathbb{R}^2)},
\\[0.3cm]
||n(t)||_{L^2(\mathbb{R}^2)}=\frac{\omega}{T-t}||N||_{L^2(\mathbb{R}^2)},
\\[0.3cm]
||v(t)||_{L^2(\mathbb{R}^2)}=\frac{\omega c(P,N)}{T-t}.
\end{array}
\right.\eqno(1.13)
$$
\\
These estimates imply that the lower bound estimates of blow-up rate (1.6), (1.7) and (1.8) in Theorem 1.3 are optimal. On the other hand, letting $\omega\rightarrow+\infty$ and $(P,N)\rightarrow (Q,-Q^2)$, it yields that (1.9) and (1.10) in Theorem 1.3 are also optimal. \hfill$\Box$ \\
\end{remark}

\indent We note that the Zakharov system (1.1) is a Hamiltonian system which leads to conservations of the total mass and total energy:
\\
$$\|E_{1}\|_{L^{2}(\mathbb{R}^{2})}^{2}+\|E_{2}\|_{L^{2}(\mathbb{R}^{2})}^{2}=
\|E_{10}\|_{L^{2}(\mathbb{R}^{2})}^{2}+\|E_{20}\|_{L^{2}(\mathbb{R}^{2})}^{2},\eqno(1.14)$$
\\
$$\mathcal{H}(E_{1},E_{2},n,\mathbf{v})
=\mathcal{H}(E_{10},E_{20},n_{0},\mathbf{v}_{0})=\mathcal{H}_{0},\eqno(1.15)$$
where
$ \mathcal{H}(E_1,E_2,n,\mathbf{v})$ is defined in (1.3).\\

\indent In contrast to the Zakharov system without the magnetic field effect, the presence of extra nonlinear terms $E_{2}\left(E_{1}\overline{E _{2}}-\overline{E_{1}}E_{2}\right)$ and $E_{1}\left(\overline{E _{1}}E_{2} -E_{1}\overline{E_{2}}\right)$ (generated by the magnetic field) in (1.1), make
the study of the lower-bound estimate for blow-up rates of finite time blow-up solution (to the Cauchy problem (1.1)-(1.2)) more difficult and complicated. Motivated by Merle's beautiful arguments given in \cite{25Frank}, in particular, its geometrical estimate, the non-vanishing estimate and the compactness argument, we need to establish  additional a priori estimates for the extra nonlinear terms. For this we also need some different techniques from those adopted in \cite{25Frank}. In particular, to obtain the optimal lower bound of blow-up rate, the initial mass is required to satisfy (1.4)
so that we can establish corresponding needed a priori estimates involving the higher order nonlinear terms. As we have mentioned earlier, the condition (1.4) is natural from both the physical and mathematical point of views. Indeed, by Lemma 2.2 in Section 2, it is standard to conclude that the mild (no blow up) solution of \eqref{1.1} is globally well-defined if the initial data $(E_{10} ,E_{20})$ satisfies
$$
||E_{10}||_{L^2(\mathbb{R}^2)}^2+||E_{20}||_{L^2(\mathbb{R}^2)}^2 < \frac{||Q||_{L^2(\mathbb{R}^2)}^2}{1+\eta}.
$$
\\
In \cite{28Gan} the authors also pointed out that there is no mass-concentration at a finite time provided
$$
||E_{10}||_{L^2(\mathbb{R}^2)}^2+||E_{20}||_{L^2(\mathbb{R}^2)}^2 = \frac{||Q||_{L^2(\mathbb{R}^2)}^2}{1+\eta}.
$$
We note that condition \eqref{1.4} is also consistent with the blow-up dynamics of the classical Zakharov system \cite{23Frank} when $\eta \to 0$. On the other hand, it is a fascinating issue to investigate some quantitative properties of the blow-up dynamics of (1.1)-(1.2) when $||E_{10}||_{L^2(\mathbb{R}^2)}^2+||E_{20}||_{L^2(\mathbb{R}^2)}^2 \geq \frac{||Q||_{L^2(\mathbb{R}^2)}^2}{\eta}$ .
For example, it may involve multiple bubbles blow-ups etc, but we have to leave it for a future study.
\\
 \indent The rest of paper is organized as follows. In Section 2, we collect preparatory materials, mainly on some lemmas and propositions which are crucial to the proof of Theorem 1.3. Section 3 is devoted to establishing some properties of the rescaled Zakharov system. In section 4 we prove the optimal lower-bound of finite time blow-up rate (Theorem 1.3).

\section{Preliminaries}

In this section, we give some notations and basic estimates. First we recall a lemma in \cite{2Brezis,6Evans}.\\
\begin{lemma} \label{2.1}
Let~$\Omega$~be a smooth bounded domain in~$\mathbb{R}^n$~with~$n\geqslant 2$ and $\frac{1}{p}+\frac{1}{q}=1$.~~If~~$f_k\rightharpoonup f$ in $L^p(\Omega)$, and $g_k\to g$ in $L^q(\Omega)$ as $k\to+\infty$, then
\begin{gather*}
\int_{\Omega}f_k g_kdx\to\int_{\Omega}fgdx~~as ~~k\to+\infty.
\end{gather*}
\end{lemma}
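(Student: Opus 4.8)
The statement is the standard fact that pairing a weakly convergent sequence against a strongly convergent one in dual Lebesgue exponents passes to the limit, so the plan is to split the difference $\int_{\Omega}f_k g_k\,dx-\int_{\Omega}fg\,dx$ into a piece handled by H\"older's inequality and a piece handled directly by the definition of weak convergence. First I would write
$$\int_{\Omega}f_k g_k\,dx-\int_{\Omega}fg\,dx=\int_{\Omega}f_k(g_k-g)\,dx+\int_{\Omega}(f_k-f)g\,dx,$$
and treat the two terms separately.

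For the first term, H\"older's inequality gives $\left|\int_{\Omega}f_k(g_k-g)\,dx\right|\le\|f_k\|_{L^p(\Omega)}\|g_k-g\|_{L^q(\Omega)}$. The key point here is that a weakly convergent sequence is norm-bounded: by the Banach--Steinhaus theorem, $f_k\rightharpoonup f$ in $L^p(\Omega)$ implies $\sup_k\|f_k\|_{L^p(\Omega)}=:M<\infty$. Since $g_k\to g$ strongly in $L^q(\Omega)$ with $q$ the conjugate exponent of $p$, we have $\|g_k-g\|_{L^q(\Omega)}\to0$, and therefore $\left|\int_{\Omega}f_k(g_k-g)\,dx\right|\le M\,\|g_k-g\|_{L^q(\Omega)}\to0$.

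For the second term, since $g\in L^q(\Omega)=\bigl(L^p(\Omega)\bigr)^{*}$, the map $h\mapsto\int_{\Omega}hg\,dx$ is a bounded linear functional on $L^p(\Omega)$; hence the very definition of $f_k\rightharpoonup f$ yields $\int_{\Omega}(f_k-f)g\,dx=\int_{\Omega}f_k g\,dx-\int_{\Omega}fg\,dx\to0$. Adding the two estimates gives $\int_{\Omega}f_k g_k\,dx\to\int_{\Omega}fg\,dx$, as claimed. I do not expect a genuine obstacle here; the only step that is not purely mechanical is invoking the uniform bound $\sup_k\|f_k\|_{L^p(\Omega)}<\infty$, which must be justified by Banach--Steinhaus rather than assumed, and one should note that the argument requires $1\le p<\infty$ so that weak convergence in $L^p(\Omega)$ is tested precisely against $L^q(\Omega)=\bigl(L^p(\Omega)\bigr)^{*}$ (the boundedness of $\Omega$ and the smoothness of its boundary play no essential role beyond guaranteeing that the exponents are admissible).
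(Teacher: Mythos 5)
Your proof is correct and is the standard argument: split $\int f_k g_k - \int fg$ into $\int f_k(g_k-g)+\int(f_k-f)g$, control the first term by H\"older together with the uniform bound $\sup_k\|f_k\|_{L^p}<\infty$ from Banach--Steinhaus, and handle the second by the definition of weak convergence, with the correct caveat that one needs $1\leqslant p<\infty$ so that $L^q(\Omega)=(L^p(\Omega))^*$. The paper itself offers no proof of this lemma --- it simply cites it from the references (Brezis; Evans) --- so there is no alternative argument to compare against; your write-up supplies exactly the justification the paper omits.
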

\qquad\hfill$\Box$\\

\begin{lemma} \label{2.2}(Weinstein \cite{30Weinstein})
\quad For all $u\in H^1(\mathbb{R}^2)$,
\begin{gather*}
\frac{1}{2}\|u\|_{L^4(\mathbb{R}^2)}^4
\leqslant\left(\frac{\|u\|_{L^2(\mathbb{R}^2)}^2}
{\|Q\|_{L^2(\mathbb{R}^2)}^2}\right)\|\nabla
u\|_{L^2(\mathbb{R}^2)}^2,
\end{gather*}
where~$Q$~is the unique positive solution of the following equation
\begin{gather*}
-\Delta V+V=V^3.
\end{gather*}
\end{lemma}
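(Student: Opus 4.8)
The plan is to recover the sharp constant from the variational characterization underlying Weinstein's inequality. I would introduce the Weinstein functional
$$J(u)=\frac{\|\nabla u\|_{L^2(\R^2)}^2\,\|u\|_{L^2(\R^2)}^2}{\|u\|_{L^4(\R^2)}^4},\qquad u\in H^1(\R^2)\setminus\{0\},$$
and set $\beta:=\inf_{u}J(u)$. The Gagliardo--Nirenberg/Sobolev embedding $H^1(\R^2)\hookrightarrow L^4(\R^2)$ shows $J$ is bounded away from zero, so $\beta>0$ and the claimed inequality already holds with the a priori constant $\beta^{-1}$ in place of $2/\|Q\|_{L^2(\R^2)}^2$. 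The entire content of the lemma is therefore the identity $\beta=\tfrac12\|Q\|_{L^2(\R^2)}^2$, i.e. that the infimum is attained exactly at the ground state.

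Next I would prove that $\beta$ is attained. The key structural facts are that $J$ is invariant under the two-parameter rescaling $u\mapsto\lambda\,u(\mu\,\cdot)$, and that Schwarz symmetrization preserves $\|u\|_{L^2}$ and $\|u\|_{L^4}$ while not increasing $\|\nabla u\|_{L^2}$. Hence a minimizing sequence may be taken radially symmetric, nonnegative and decreasing, and, using the two scaling parameters, normalized so that $\|u_k\|_{L^2(\R^2)}=\|\nabla u_k\|_{L^2(\R^2)}=1$; minimizing $J$ then amounts to maximizing $\|u_k\|_{L^4}^4$. The compact embedding of radial $H^1(\R^2)$ into $L^4(\R^2)$ (Strauss' lemma) supplies a subsequence converging strongly in $L^4$, so the limit $\psi$ is nonzero, and weak lower semicontinuity of the $H^1$ norms gives $J(\psi)\le\beta$, whence $\psi$ is a genuine minimizer. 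This compactness step is the main obstacle: without the radial reduction a minimizing sequence could lose mass at spatial infinity, so one must either run the symmetrization argument above or carry out a full concentration--compactness analysis ruling out vanishing and dichotomy.

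With a minimizer $\psi$ in hand I would write out the Euler--Lagrange equation $\delta J(\psi)=0$, which after clearing norms takes the form $-a\Delta\psi+b\psi=c\,\psi^3$ with positive constants $a,b,c$ determined by $\|\psi\|_{L^2}$, $\|\nabla\psi\|_{L^2}$, $\|\psi\|_{L^4}$. Using the scaling freedom $\psi\mapsto\lambda\psi(\mu\cdot)$ I can normalize $a=b=c=1$, so that $\psi$ becomes a positive radial solution of the ground state equation $-\Delta Q+Q=Q^3$; by the uniqueness of such a solution, $\psi=Q$, and therefore $\beta=J(Q)$.

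Finally I would evaluate $J(Q)$ through two integral identities. Multiplying $-\Delta Q+Q=Q^3$ by $Q$ and integrating gives $\|\nabla Q\|_{L^2}^2+\|Q\|_{L^2}^2=\|Q\|_{L^4}^4$, while the Pohozaev identity obtained by testing against $x\cdot\nabla Q$, specialized to dimension two where the gradient term carries the vanishing factor $d-2=0$, gives $\|Q\|_{L^2}^2=\tfrac12\|Q\|_{L^4}^4$. Combining the two yields $\|\nabla Q\|_{L^2}^2=\|Q\|_{L^2}^2$ and $\|Q\|_{L^4}^4=2\|Q\|_{L^2}^2$, hence
$$J(Q)=\frac{\|\nabla Q\|_{L^2}^2\,\|Q\|_{L^2}^2}{\|Q\|_{L^4}^4}=\frac{\|Q\|_{L^2(\R^2)}^2}{2}.$$
Thus $\beta=\tfrac12\|Q\|_{L^2(\R^2)}^2$, which rearranges exactly to the sharp bound $\tfrac12\|u\|_{L^4(\R^2)}^4\le\bigl(\|u\|_{L^2(\R^2)}^2/\|Q\|_{L^2(\R^2)}^2\bigr)\|\nabla u\|_{L^2(\R^2)}^2$.
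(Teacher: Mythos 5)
Your proposal is correct. The paper itself gives no proof of this lemma --- it is quoted verbatim from Weinstein's 1983 paper and used as a black box --- so there is no internal argument to compare against; what you have written is essentially a faithful reconstruction of Weinstein's original variational proof: minimize the scale-invariant functional $J(u)=\|\nabla u\|_{L^2}^2\|u\|_{L^2}^2/\|u\|_{L^4}^4$, obtain a minimizer via Schwarz symmetrization and the compact embedding of radial $H^1(\mathbb{R}^2)$ into $L^4(\mathbb{R}^2)$, rescale the Euler--Lagrange equation onto the ground state equation, and evaluate $J(Q)$ with the two standard integral identities. The only points worth making explicit in a full write-up are (i) that the nonnegative radial minimizer is in fact strictly positive (strong maximum principle) before invoking the uniqueness of $Q$, and (ii) that the Pohozaev identity is justified by the exponential decay of $Q$; both are routine. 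Your computation $\|\nabla Q\|_{L^2}^2=\|Q\|_{L^2}^2$ and $\|Q\|_{L^4}^4=2\|Q\|_{L^2}^2$, hence $\beta=\tfrac12\|Q\|_{L^2}^2$, matches the sharp constant in the stated inequality.
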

\qquad\hfill$\Box$\\
\indent Since (1.1-1)-(1.1-2) and (1.1-3)-(1.1-4) have the same scaling on spatial structure but they are of different space-time structures, it will conserve (1.1-1)-(1.1-2) and (1.1-3)-(1.1-4) respectively. Taking a suitable space-time scaling to the Zakharov system (1.1) can yield a re-scaled system.\\
\begin{proposition} \label{2.3}
 Let $(E_{1},E_{2},n,\mathbf{v})$ be the finite-time blow-up solutions to the Zakharov system (1.1) and T be its blow-up time. For any
 $t\in [0,T)$, let
$$
\left.
\begin{array}{ll}
&\tilde{E}_1(t,s,x)=\frac{1}{\lambda(t)}E_1\left(t+\frac{s}{\lambda(t)},
\frac{x}{\lambda(t)}\right),
\\[0.3cm]
&\tilde{E}_2(t,s,x)=\frac{1}{\lambda(t)}E_2\left(t+\frac{s}{\lambda(t)},
\frac{x}{\lambda(t)}\right),
\\[0.3cm]
&\tilde{n}(t,s,x)=\frac{1}{\lambda^2(t)}n\left(t+\frac{s}{\lambda(t)},
\frac{x}{\lambda(t)}\right),
\\[0.3cm]
&\tilde{\textbf{v}}(t,s,x)=\frac{1}{\lambda^2(t)}\textbf{v}\left(t+\frac{s}
{\lambda(t)},\frac{x}{\lambda(t)}\right),
\end{array}
\right.\eqno(2.1)
$$
where~$s\in\left[0,\lambda(t)(T-t)\right)$,\\
$$
\left.
\begin{array}{ll}
\lambda^2(t)&\displaystyle =\|(E_1,E_2,n,\textbf{v})\|^{2}_{H^{1}(\mathbb{R}^{2})\times H^{1}(\mathbb{R}^{2})
\times L^{2}(\mathbb{R}^{2})\times L^{2}(\mathbb{R}^{2})}
\\[0.3cm]
 &\displaystyle =\int_{\mathbb{R}^2}|\nabla E_1(t,x)|^2\,dx+\int_{\mathbb{R}^2}^{}|\nabla E_2(t,x)|^2\,dx
\\[0.4cm]
&\qquad\displaystyle+\frac{1}{2}\int_{\mathbb{R}^2}^{}|n(t,x)|^2\,dx+
\frac{1}{2}\int_{\mathbb{R}^2}^{}|\textbf{v}(t,x)|^2\,dx.
\end{array}
\right.\eqno(2.2)
$$
\\
Then
$(\tilde E_1,\tilde E_2,\tilde n,\tilde{\textbf{v}})(s)$~satisfies the following re-scaled Zakharov system $:$
\\
$$
\left\{
\begin{array}{ll}
\frac{1}{\lambda}i\tilde{E}_{1s}+\Delta\tilde E_1-\tilde
n\tilde E_1+\eta\tilde E_2\left(\tilde E_1\overline{\tilde E_2}-\overline{\tilde E_1}\tilde E_2\right)=0,\qquad\qquad&(2.3a)
\\[0.3cm]
\frac{1}{\lambda}i\tilde E_{2s}+\Delta\tilde E_2-\tilde
n\tilde E_2+\eta\tilde E_1\left(\overline{\tilde E_1}\tilde E_2-\tilde E_1\overline{\tilde E_2}\right)=0,&(2.3b)
\\[0.3cm]
\tilde n_s+\nabla\cdot\tilde{\textbf{v}}=0,&(2.3c)
\\[0.3cm]
\tilde{\textbf{v}}_s+\nabla\left(\tilde n+|\tilde E_1|^2+|\tilde E_2|^2\right)=0.&(2.3d)
 \end{array}
\right. \eqno(2.3)
$$
\\
In addition, there hold\\
\\
$(1)$
$$
\left.
\begin{array}{ll}
&\lim_{s\to\lambda(t)(T-t)}\left\|\left(\tilde E_1,\tilde E_2,\tilde n,\tilde{\textbf{v}}\right)(s)\right\|_{H^{1}(\mathbb{R}^{2})\times H^{1}(\mathbb{R}^{2})
\times L^{2}(\mathbb{R}^{2})\times L^{2}(\mathbb{R}^{2})}^2
\\[0.4cm]
 &\qquad=\lim_{s\to\lambda(t)(T-t)}\left(\left\|\nabla\tilde E_1(s)\right\|_{L^2(\mathbb{R}^2)}^2+\left\|\nabla\tilde E_2(s)\right\|_{L^2(\mathbb{R}^2)}^2\right.
 \\[0.4cm]
&\qquad\qquad\qquad\qquad\qquad\left.+\frac{1}{2}||\tilde
n(s)||_{L^2(\mathbb{R}^2)}^2+\frac{1}{2}
||\tilde{\textbf{v}}(s)||_{L^2(\mathbb{R}^2)}^2\right)
\\[0.4cm]
 &\qquad=+\infty,
 \end{array}
\right.\eqno(2.4)
$$
\\
$(2)$
\\
$$
\int_{\mathbb{R}^2} \left(\left|\nabla\tilde E_1(t,0,x)\right|^2+\left|\nabla\tilde E_2(t,0,x)\right|^2+\frac{1}{2}|\tilde
n(t,0,x)|^2+\frac{1}{2}|\tilde{\textbf{v}}(t,0,x)|^2\right)\,dx=1,\eqno(2.5)
$$
\\
$(3)$
$$
\left.
\begin{array}{ll}
&\left\|\tilde E_1(t,s,x)\right\|_{L^2(\mathbb{R}^2)}^2+\left\|\tilde E_2(t,s,x)\right\|_{L^2(\mathbb{R}^2)}^2
\\[0.4cm]
&\qquad=\left\|\tilde E_1(t,0,x)\right\|_{L^2(\mathbb{R}^2)}^2+\left\|\tilde E_2(t,0,x)\right\|_{L^2(\mathbb{R}^2)}^2
\\[0.4cm]
&\qquad=\left\|E_{10}\right\|_{L^2(\mathbb{R}^2)}^2+\left\|E_{20}\right\|_{L^2(\mathbb{R}^2)}^2,\end{array}
\right.\eqno(2.6)
$$
and\\
\\
$(4)$
$$
\left.
\begin{array}{ll}
&\mathcal{H}(\tilde E_1,\tilde E_2,\tilde n,\tilde{\textbf{v}})
\\[0.4cm]
&\quad=\left\|\nabla\tilde E_1\right\|_{L^2(\mathbb{R}^2)}^2+\left\|\nabla\tilde E_2\right\|_{L^2(\mathbb{R}^2)}^2+\frac{1}{2}||\tilde n||_{L^2(\mathbb{R}^2)}^2+\frac{1}{2}||\tilde{\textbf{v}}||
_{L^2(\mathbb{R}^2)}^2
\\[0.4cm]
&\displaystyle\quad\quad+\int_{\mathbb{R}^2}^{}\tilde{n}\left(|\tilde E_1|^2+|\tilde E_2|^2\right)\,dx-\eta\int_{\mathbb{R}^2}^{}|\tilde E_1|^2|\tilde E_2|^2\,dx
\\[0.4cm]
&\displaystyle\quad\quad+\frac{\eta}{2}\int_{\mathbb{R}^2}^{}\left(\left(\tilde E_1\right)^2\left(\overline{\tilde E_2}\right)^2+\left(\overline{\tilde E_1}\right)^2\left(\tilde E_2\right)^2\right)\,dx
\\[0.4cm]
&\displaystyle\quad=\frac{1}{\lambda^2(t)}\mathcal{H}(E_1,E_2,n,\textbf{v})
\\[0.4cm]
&\displaystyle\quad=\frac{1}{\lambda^2(t)}\mathcal{H}(E_{10},E_{20},n_0,\textbf{v}_0).
\end{array}
\right.\eqno(2.7)
$$
\end{proposition}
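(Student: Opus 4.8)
My plan is to treat Proposition 2.3 as what it is—a change-of-variables statement—and to derive the rescaled system (2.3) and then each of the four identities (2.4)–(2.7) from the scaling (2.1) together with the conservation laws (1.14)–(1.15). Throughout I would freeze $t$ (hence $\lambda=\lambda(t)$), set $\tau=t+s/\lambda$ and $y=x/\lambda$, and differentiate the definitions (2.1) by the chain rule. For the first two equations one computes $\frac{1}{\lambda}i\tilde E_{1s}=\frac{i}{\lambda^3}(\p_\tau E_1)(\tau,y)$ and $\Delta_x\tilde E_1=\frac{1}{\lambda^3}(\Delta E_1)(\tau,y)$, while $\tilde n\tilde E_1$ and the cubic magnetic term each carry the same factor $\lambda^{-3}$; collecting terms shows the left side of (2.3a) equals $\lambda^{-3}$ times the left side of (1.1-1) evaluated at $(\tau,y)$, hence vanishes, and (2.3b) is identical. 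The conceptual point—already flagged in the text—is that the Schr\"odinger equations, being first order in time but second order in space, require the prefactor $\frac{1}{\lambda}$ to reconcile the two scalings, whereas the wave part is first order in both, so $\p_s\tilde n=\frac{1}{\lambda^3}(\p_\tau n)(\tau,y)$ and $\nabla_x\!\cdot\tilde{\mathbf{v}}=\frac{1}{\lambda^3}(\nabla\!\cdot\mathbf{v})(\tau,y)$ balance with no prefactor, yielding (2.3c)–(2.3d) exactly as stated.

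Next I would record how the $L^2$-based norms transform under $y=x/\lambda$, so that $dx=\lambda^2\,dy$ in two dimensions. A direct substitution gives the scale-invariance of the mass, $\|\tilde E_j(s)\|_{L^2}^2=\|E_j(\tau)\|_{L^2}^2$, together with $\|\nabla\tilde E_j(s)\|_{L^2}^2=\lambda^{-2}\|\nabla E_j(\tau)\|_{L^2}^2$, $\|\tilde n(s)\|_{L^2}^2=\lambda^{-2}\|n(\tau)\|_{L^2}^2$, and the same for $\tilde{\mathbf v}$. Evaluating these at $s=0$ (so $\tau=t$) and dividing by the very quantity that \emph{defines} $\lambda^2(t)$ in (2.2) immediately gives the normalization (2.5); combining mass-invariance with the conservation law (1.14) gives (2.6). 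For (2.4) I would let $s\to\lambda(t)(T-t)^-$, so that $\tau\to T^-$: the homogeneous part of the rescaled norm equals $\lambda^{-2}(t)$ times the $\dot H^1\times\dot H^1\times L^2\times L^2$ norm of the original solution, which blows up by hypothesis, whereas the $L^2$ mass of $(\tilde E_1,\tilde E_2)$ stays pinned at the finite initial mass. Hence the full $\BH_1$-norm and its homogeneous part share the limit $+\infty$, which is precisely the equality asserted in (2.4).

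For the energy identity (2.7) I would first apply the pointwise algebraic identity $|E_1\ov E_2-E_2\ov E_1|^2=2|E_1|^2|E_2|^2-\bigl(E_1^2\ov E_2^2+\ov E_1^2E_2^2\bigr)$, which rewrites the magnetic term $-\frac{\eta}{2}\int|E_1\ov E_2-E_2\ov E_1|^2$ in the form $-\eta\int|\tilde E_1|^2|\tilde E_2|^2+\frac{\eta}{2}\int\bigl((\tilde E_1)^2(\ov{\tilde E_2})^2+(\ov{\tilde E_1})^2(\tilde E_2)^2\bigr)$ displayed in (2.7). Then the same change of variables shows that each of the six terms of $\mathcal{H}$—the two gradient terms, the $\frac12\|n\|^2$ and $\frac12\|\mathbf v\|^2$ terms, the coupling $\int n(|E_1|^2+|E_2|^2)$, and the quartic magnetic terms—scales exactly as $\lambda^{-2}$, each being built from products of $\lambda^{-1}$-weighted fields integrated against $dx=\lambda^2\,dy$. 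This yields $\mathcal{H}(\tilde E_1,\tilde E_2,\tilde n,\tilde{\mathbf v})(s)=\lambda^{-2}(t)\,\mathcal{H}(E_1,E_2,n,\mathbf v)(\tau)$, and the energy conservation (1.15) replaces the right-hand side by $\lambda^{-2}(t)\mathcal{H}_0$, giving (2.7).

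The computation is routine; the only places demanding genuine care are the bookkeeping of the powers of $\lambda$ produced by the mixed space–time scaling—in particular checking that the $\frac{1}{\lambda}$ prefactor in (2.3a)–(2.3b) is exactly what homogenizes the Schr\"odinger equations while the wave equations need none—and the observation underlying (2.4) that mass conservation keeps the inhomogeneous part of the $\BH_1$-norm bounded, so that the divergence to $+\infty$ is carried entirely by the homogeneous part.
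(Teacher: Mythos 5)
Your proposal is correct, and for the derivation of the rescaled system (2.3) and the identities (2.4)--(2.5) it coincides with the paper's computation (chain rule, bookkeeping of powers of $\lambda$, and $dx=\lambda^2\,dy$ in two dimensions). Where you genuinely diverge is in parts (3) and (4): you obtain (2.6) and (2.7) purely from the homogeneity of the mass and of each term of $\mathcal{H}$ under the scaling (2.1) --- mass is scale-invariant and every term of the Hamiltonian picks up exactly $\lambda^{-2}$ --- valid for \emph{all} $s$, so that the $s$-independence of $\|\tilde E_1\|_{L^2}^2+\|\tilde E_2\|_{L^2}^2$ and of $\mathcal{H}(\tilde E_1,\tilde E_2,\tilde n,\tilde{\mathbf{v}})$ is inherited directly from the conservation laws (1.14)--(1.15) of the original system at time $\tau=t+s/\lambda(t)$. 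The paper instead re-derives these conservation laws intrinsically for the rescaled system: it pairs (2.3a)--(2.3b) with $\overline{\tilde E_j}$ (imaginary part) and with $\overline{\tilde E_{js}}$ (real part), and (2.3c)--(2.3d) with $\tilde n$ and $\tilde{\mathbf{v}}$, to show $\frac{d}{ds}$ of the rescaled mass and Hamiltonian vanish, and only then evaluates the scaling at $s=0$. Your route is shorter and avoids the inner-product manipulations (including the cancellation $\mathrm{Im}(f^2\bar g^2+\bar f^2 g^2)=0$ that the paper needs for the cubic magnetic terms); the paper's route has the side benefit of exhibiting the rescaled system (2.3) as itself mass- and energy-conserving, which is the structural fact it reuses later. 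Your algebraic identity $|E_1\overline{E_2}-E_2\overline{E_1}|^2=2|E_1|^2|E_2|^2-\bigl(E_1^2\overline{E_2}^2+\overline{E_1}^2E_2^2\bigr)$ correctly reconciles the form of the magnetic term in (1.16) with the expanded form displayed in (2.7), so no gap remains.
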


{\bf Proof.} According to (2.1),~direct calculation yields
$$
\left\{
\begin{array}{ll}
\tilde E_{1s}=\frac{1}{\lambda^2(t)}E_{1t},\quad\Delta\tilde E_1=\frac{1}{\lambda^3(t)}\Delta E_1,\quad \nabla\left|\tilde E_1\right|^2=\frac{1}{\lambda^3(t)}\nabla|E_1|^2,
 \\[0.3cm]
\tilde E_{2s}=\frac{1}{\lambda^2(t)}E_{2t},\quad\Delta\tilde E_2=\frac{1}{\lambda^3(t)}\Delta E_2,\quad \nabla\left|\tilde E_2\right|^2=\frac{1}{\lambda^3(t)}\nabla|E_2|^2,
 \\[0.3cm]
\tilde n_s=\frac{1}{\lambda^3(t)}n_t,\quad\nabla\tilde n=\frac{1}{\lambda^3(t)}\nabla n,\quad\nabla\cdot\tilde{\textbf{v}}=\frac{1}{\lambda^3(t)}\nabla\cdot\textbf{v},\quad\tilde{\textbf{v}}_s=\frac{1}{\lambda^3(t)}\textbf{v}_t.
\end{array}
\right.\eqno(2.8)
$$
Taking (2.8) into the Zakharov system (1.1) yields the re-scaled Zakharov system (2.3). Similarly, using (2.1) one obtains
$$
\left.
\begin{array}{ll}
&\displaystyle\left\|\left(\tilde E_1,\tilde E_2,\tilde n,\tilde{\textbf{v}}\right)(t,s,x)\right\|_{H^{1}(\mathbb{R}^{2})\times H^{1}(\mathbb{R}^{2})
\times L^{2}(\mathbb{R}^{2})\times L^{2}(\mathbb{R}^{2})}^2
 \\[0.4cm]
 &\displaystyle\qquad=\left\|\nabla\tilde E_1(t,s,x)\right\|_{L^2(\mathbb{R}^2)}^2+\left\|\nabla\tilde E_2(t,s,x)\right\|_{L^2(\mathbb{R}^2)}^2
 \\[0.4cm]
 &\displaystyle\qquad\quad
 +\frac{1}{2}\left\|\tilde
n(t,s,x)\right\|_{L^2(\mathbb{R}^2)}^2+\frac{1}{2}\left\|\tilde{\textbf{v}}(t,s,x)\right\|
_{L^2(\mathbb{R}^2)}^2
 \\[0.4cm]
 &\displaystyle\qquad=\left\|\frac{1}{\lambda^2(t)}\nabla E_1\right\|_{L^2(\mathbb{R}^2)}^2+\left\|\frac{1}{\lambda^2(t)}\nabla
E_2\right\|_{L^2(\mathbb{R}^2)}^2
\\[0.4cm]
 &\displaystyle\qquad\quad
+\frac{1}{2}\left\|\frac{1}{\lambda^2(t)}n\right\|_
{L^2(\mathbb{R}^2)}^2+\frac{1}{2}\left\|\frac{1}{\lambda^2(t)}\textbf{v}\right\|
_{L^2(\mathbb{R}^2)}^2
 \\[0.4cm]
&\displaystyle\qquad=\frac{1}{\lambda^2(t)}\int_{\mathbb{R}^2}^{}\left(|\nabla E_1|^2+|\nabla
E_2|^2+\frac{1}{2}|n|^2+\frac{1}{2}|\textbf{v}|^2\right)
 \\[0.4cm]
&\displaystyle\qquad\qquad\qquad\qquad
\left(t+\frac{s}
{\lambda(t)},\frac{x}{\lambda(t)}\right)\,d\left(\frac{x}{\lambda(t)}\right).
\end{array}
\right.\eqno(2.9)
$$
\\
Noting that $t+\frac{s}{\lambda(t)}\to T$ as $s\to\lambda(t)(T-t)$,~one has
$$
\left.
\begin{array}{ll}
&\displaystyle\lim_{s\to\lambda(t)(T-t)}\int_{\mathbb{R}^2}^{}\left(|\nabla E_1|^2+|\nabla
E_2|^2+\frac{1}{2}|n|^2+\frac{1}{2}|\textbf{v}|^2\right)\qquad\qquad\qquad\qquad
\\[0.4cm]
&\displaystyle\qquad\qquad\qquad\qquad\qquad\left(t+\frac{s}
{\lambda(t)},\frac{x}{\lambda(t)}\right)\,d\left(\frac{x}{\lambda(t)}\right)=+\infty.
\end{array}
\right.\eqno(2.10)
$$
That is,
 $$
\lim_{s\to\lambda(t)(T-t)}\left\|\left(\tilde E_1,\tilde E_2,\tilde n,\tilde {\mathbf{v}}\right)(t,s,x)\right\|_{H^{1}(\mathbb{R}^{2})\times H^{1}(\mathbb{R}^{2})
\times L^{2}(\mathbb{R}^{2})\times L^{2}(\mathbb{R}^{2})}^2=+\infty, \eqno(2.11)
$$
this is the estimate (2.4).\\
\indent Taking the inner product of (2.3a) with $\overline{\tilde{E_1}}$ and of (2.3b) with $\overline{\tilde{E_2}}$, integrating with respect to spatial variable $x$, then taking the imaginary part of the resulting equations yield
 $$
\left.
\begin{array}{ll}
&\displaystyle Im\int_{\mathbb{R}^2}\left[\frac{i}{\lambda}\tilde E_{1s}\cdot\overline{\tilde E_1}+\Delta\tilde E_1\cdot\overline{\tilde E_1}-\tilde n\tilde E_1\cdot\overline{\tilde E_1}+\eta\overline{\tilde E_1}\tilde E_2\left(\tilde E_1\overline{\tilde E_2}-\overline{\tilde E_1}\tilde E_2\right)\right]dx
\\[0.4cm]
 &\displaystyle\qquad=Re\int_{\mathbb{R}^2}\frac{1}{ \lambda} \tilde E_{1s}\cdot\overline{\tilde E_1} \,dx-Im\int_{\mathbb{R}^2}\eta\left(\overline{\tilde E_1}\right)^2\left(\tilde E_2\right)^2\,dx
 \\[0.4cm]
 &\displaystyle\qquad=\frac{1}{2\lambda}\frac{d}{ds}\int_{\mathbb{R}^2}\left|\tilde E_1\right|^2\,dx-Im\int_{\mathbb{R}^2}\eta\left(\overline{\tilde E_1}\right)^2\left(\tilde E_2\right)^2\,dx
 \\[0.4cm]
 &\qquad=0.
\end{array}
\right.\eqno(2.12)
$$
That is,
$$
\frac{1}{2\lambda}\frac{d}{ds}\int_{\mathbb{R}^2}\left|\tilde E_1\right|^2\,dx-Im\int_{\mathbb{R}^2}\eta\left(\overline{\tilde E_1}\right)^2\left(\tilde E_2\right)^2\,dx=0.\eqno(2.13)
$$
Similar discussion gives
$$
\frac{1}{2\lambda}\frac{d}{ds}\int_{\mathbb{R}^2}\left|\tilde E_2\right|^2\,dx-Im\int_{\mathbb{R}^2}\eta\left({\tilde E_1}\right)^2\left(\overline{\tilde E_2}\right)^2\,dx=0.\eqno(2.14)
$$
(2.13) and (2.14) yield

$$
\frac{d}{ds}\int_{\mathbb{R}^2}\left(\left|\tilde{E_1}\right|^2+\left|\tilde{E_2}\right|^2\right)\,dx=0.\eqno(2.15)
$$
Here we use the conclusion:  $\forall (f,g)\in\mathbb{C}^2$,~~$Im\left(f^2\overline{g}^2+\overline{f}^2 g^2\right)=0$.~Hence one gets
$$
\left.
\begin{array}{ll}
&\left\|\tilde E_1(t,s,x)\right\|_{L^2(\mathbb{R}^2)}^2+\left\|\tilde E_2(t,s,x)\right\|_{L^2(\mathbb{R}^2)}^2
\\[0.4cm]
&\qquad=\left\|\tilde E_1(t,0,x)\right\|_{L^2(\mathbb{R}^2)}^2+\left\|\tilde E_2(t,0,x)\right\|_{L^2(\mathbb{R}^2)}^2.
\end{array}
\right.\eqno(2.16)
$$
In addition, from the mass conservation (1.14), it follows that
$$
\left.
\begin{array}{ll}
&\left\|\tilde E_1(t,0,x)\right\|_{L^2(\mathbb{R}^2)}^2+\left\|\tilde E_2(t,0,x)\right\|_{L^2(\mathbb{R}^2)}^2
\\[0.4cm]
 &\displaystyle\qquad=\int_{\mathbb{R}^2}\left(\left|\tilde E_1(t,0,x)\right|^2+\left|\tilde E_2(t,0,x)\right|^2\right)\,dx
 \\[0.4cm]
 &\displaystyle\qquad=\int_{\mathbb{R}^2}^{}\frac{1}
 {\lambda^2(t)}\left(|E_1|^2+|E_2|^2\right)
\left(t,\frac{x}{\lambda(t)}\right)\,dx
\\[0.4cm]
&\displaystyle\qquad=\int_{\mathbb{R}^2}\left(|E_1|^2+|E_2|^2\right)
\left(t,\frac{x}{\lambda(t)}\right)d\left(\frac{x}{\lambda(t)}\right)
\\[0.4cm]
&\displaystyle\qquad=\|E_{10}\|_{L^2(\mathbb{R}^2)}^2+\|E_{20}\|_{L^2(\mathbb{R}^2)}^2.
\end{array}
\right.\eqno(2.17)
$$
Noting that
$$
\lambda^2(t)=\int_{\mathbb{R}^2}^{}|\nabla E_1|^2\,dx+\int_{\mathbb{R}^2}^{}|\nabla
E_2|^2\,dx+\frac{1}{2}\int_{\mathbb{R}^2}^{}|n|^2\,dx+\frac{1}{2}
\int_{\mathbb{R}^2}^{}|\textbf{v}|^2\,dx,\eqno(2.18)
$$
one gets
$$
\left.
\begin{array}{ll}
&\displaystyle\int_{\mathbb{R}^2}^{}\left(|\nabla\tilde E_1(t,0,x)|^2+|\nabla\tilde E_2(t,0,x)|^2+\frac{1}{2}|\tilde
n(t,0,x)|^2+\frac{1}{2}|\tilde{\textbf{v}}(t,0,x)|^2\right)\,dx
\\[0.4cm]
 &\displaystyle=\frac{1}{\lambda^2(t)}\int_{\mathbb{R}^2}^{}\left(|\nabla E_1(t,x)|^2+|\nabla
E_2(t,x)|^2+\frac{1}{2}|n(t,x)|^2+\frac{1}{2}|\textbf{v}(t,x)|^2\right)\,dx
\\[0.4cm]
&=1.
\end{array}
\right.\eqno(2.19)
$$
The above arguments imply (2.5) and (2.6).\\
\\
\indent Finally, taking the inner product of (2.3a) with $\overline{\tilde E_{1s}}$ and of (2.3b) with $\overline{\tilde E_{2s}}$,~integrating with respect to spatial variable ~$x$,~then taking the real part of the result equations,~we have
$$\displaystyle Re\int_{\mathbb{R}^2}\left[\frac{i}{\lambda}\tilde E_{1s}\cdot\overline{\tilde E_{1s}}+\Delta\tilde E_1\cdot\overline{\tilde E_{1s}}-\tilde n\tilde E_1\cdot\overline{\tilde E_{1s}}
 +\eta\overline{\tilde E_{1s}}\tilde E_2\left(\tilde E_1\overline{\tilde E_2}-\overline{\tilde E_1}\tilde E_2\right)\right]dx=0,
 $$
 that is,
$$
\left.
\begin{array}{ll}
 &\displaystyle Re\int_{\mathbb{R}^2}\frac{i}{\lambda}\left|\tilde E_{1s}\right|^2dx-\frac{1}{2}\frac{d}{ds}\int_{\mathbb{R}^2}
 \left|\nabla\tilde E_1\right|^2dx-\int_{\mathbb{R}^2}\tilde n\frac{1}{2}\frac{d}{ds}\left|\tilde E_1\right|^2dx
 \\[0.4cm]
&\displaystyle\qquad+\frac{\eta}{2}\int_{\mathbb{R}^2}\left|\tilde E_2\right|^2\cdot\frac{d}{ds}\left|\tilde E_1\right|^2 dx-\frac{\eta}{2}Re\int_{\mathbb{R}^2}\left(\tilde E_2\right)^2\cdot\frac{d}{ds}\left(\overline{\tilde E_1}\right)^2dx
\\[0.4cm]
&\displaystyle=-\frac{1}{2}\frac{d}{ds}\int_{\mathbb{R}^2}\left|\nabla\tilde E_1\right|^2dx-\frac{1}{2}\int_{\mathbb{R}^2}\tilde n\cdot\frac{d}{ds}\left|\tilde E_1\right|^2dx
\\[0.4cm]
&\displaystyle\qquad+\frac{\eta}{2}\int_{\mathbb{R}^2}\left|\tilde E_2\right|^2\frac{d}{ds}\left|\tilde E_1\right|^2dx-\frac{\eta}{2}Re\int_{\mathbb{R}^2}\left(\tilde E_2\right)^2\frac{d}{ds}\left(\overline{\tilde E_1}\right)^2dx
\\[0.4cm]
&=0.
\end{array}
\right.\eqno(2.20)
$$
Similarly, one attains
$$ \displaystyle Re\int_{\mathbb{R}^2}\left[\frac{i}{\lambda}\tilde E_{2s}\cdot\overline{\tilde E_{2s}}+\Delta\tilde E_2\cdot\overline{\tilde E_{2s}}-\tilde n\tilde E_2\cdot\overline{\tilde E_{2s}} +\eta\overline{\tilde E_{2s}}\tilde E_1\left(\overline{\tilde E_1}\tilde E_2-\tilde E_1\overline{\tilde E_2}\right)\right]dx
=0,$$
namely,
$$
\left.
\begin{array}{ll}
&\displaystyle-\frac{1}{2}\frac{d}{ds}\int_{\mathbb{R}^2}|\nabla\tilde E_2|^2dx-\frac{1}{2}\int_{\mathbb{R}^2}\tilde n\cdot\frac{d}{ds}|\tilde E_2|^2dx
\\[0.4cm]
&\displaystyle\quad+\frac{\eta}{2}\int_{\mathbb{R}^2}|\tilde E_1|^2\frac{d}{ds}|\tilde E_2|^2dx-\frac{\eta}{2}Re\int_{\mathbb{R}^2}(\tilde E_1)^2\frac{d}{ds}\left(\overline{\tilde E_2}\right)^2dx
\\[0.4cm]
&=0.\end{array}
\right.\eqno(2.21)
$$
Next, taking the inner product of (2.3c) with $\tilde n$~yields
$$
 \displaystyle\int_{\mathbb{R}^2}\left(\tilde n_s  \tilde n+ \left(\nabla\cdot\tilde {\mathbf{v}} \right)  \tilde n\right)dx
 =\frac{1}{2}\frac{d}{ds}\int_{\mathbb{R}^2}|\tilde n|^2dx-\int_{\mathbb{R}^2}\tilde {\mathbf{v}}\cdot\nabla\tilde ndx
 =0, \eqno(2.22)
$$
On the other hand, taking the inner product of (2.3d) with $\tilde v$~implies
$$
\left.
\begin{array}{ll}
\displaystyle\int_{\mathbb{R}^2}\left[\tilde v_s\cdot\tilde {\mathbf{v}}+\nabla\left(\tilde n+|\tilde E_1|^2+|\tilde E_2|^2\right)\cdot \tilde {\mathbf{v}}\right]dx&=\frac{1}{2}\frac{d}{ds}\int_{\mathbb{R}^2}|\tilde {\mathbf{v}} |^2dx-\int_{\mathbb{R}^2}\tilde n\nabla\cdot\tilde {\mathbf{v}}dx
\\[0.3cm]
&\displaystyle +\int_{\mathbb{R}^2}\tilde n_s\left(|\tilde E_1|^2+|\tilde E_2|^2\right)dx
=0.
\end{array}
\right.\eqno(2.23)
$$
Combining (2.21) with (2.22) and (2.23) gives
$$
\left.
\begin{array}{ll}
&\displaystyle \frac{d}{ds}\int_{\mathbb{R}^2}\left(\left|\nabla\tilde E_1\right|^2+\left|\nabla\tilde E_2\right|^2+\frac{1}{2}\left|\tilde n\right|^2+\frac{1}{2}\left|\tilde {\mathbf{v}}\right|^2+\tilde n\left(\left|\tilde E_1\right|^2+\left|\tilde E_2\right|^2\right)\right) dx
\\[0.4cm]
 &\displaystyle\qquad-\frac{d}{ds}\int_{\mathbb{R}^2}\left[\eta\left|\tilde E_1\right|^2\left|\tilde E_2\right|^2+\frac{\eta}{2}\left(\left(\tilde E_1\right)^2\left(\overline{\tilde E_2}\right)^2+\left(\overline{\tilde E_1}\right)^2\left(\tilde E_2\right)^2\right)\right]dx=0,
\end{array}
\right.
$$
which implies
$$
\mathcal{H}\left(\tilde E_1,\tilde E_2,\tilde n,\tilde {\mathbf{v}}\right)(t,s,x)=\mathcal{H}\left(\tilde E_1,\tilde E_2,\tilde n,  \tilde {\mathbf{v}}\right)(t,0,x).\eqno(2.24)
$$
According to conservation of Hamiltonian (1.15), we have
$$
\left.
\begin{array}{ll}
&\displaystyle\mathcal{H}\left(\tilde E_1,\tilde E_2,\tilde n,\tilde{\textbf{v}}\right)(t,0,x)
\\[0.4cm]
 &\displaystyle\qquad=\left\|\nabla\tilde E_1(t,0,x)\right\|_{L^2(\mathbb{R}^2)}^2+\left\|\nabla\tilde E_2(t,0,x)\right\|_{L^2(\mathbb{R}^2)}^2
 \\[0.4cm]
 &\displaystyle\qquad\quad+\frac{1}{2}\left\|\tilde n(t,0,x)\right\|_{L^2(\mathbb{R}^2)}^2
+\frac{1}{2}\left\|\tilde{\textbf{v}}(t,0,x)\right\|_{L^2(\mathbb{R}^2)}^2
\\[0.3cm]
&\displaystyle\qquad\quad+\int_{\mathbb{R}^2}^{}\tilde
n(t,0,x)\left(\left|\tilde E_1(t,0,x)\right|^2+\left|\tilde E_2(t,0,x)\right|^2\right)\,dx
\\[0.4cm]
&\displaystyle\qquad\quad-\eta\int_{\mathbb{R}^2}^{}\left|\tilde E_1(t,0,x)\right|^2\left|\tilde E_2(t,0,x)\right|^2\,dx
\\[0.4cm]
&\displaystyle\qquad\quad+\frac{\eta}{2}
\int_{\mathbb{R}^2}^{}\left(\left(\tilde{E_1}\left(t,0,x\right)\right)^2
\left(\overline{\tilde E_2}\left(t,0,x\right)\right)^2\right.
\\[0.4cm]
&\displaystyle\qquad\qquad\qquad\qquad\left.+\left(\overline{\tilde E_1}\left(t,0,x\right)\right)^2\left(\tilde E_2\left(t,0,x\right)\right)^2\right)\,dx
\\[0.4cm]
 &\displaystyle\qquad=\left\|\frac{1}{\lambda^2(t)}\nabla{E_1}(t)\right\|_{L^2(\mathbb{R}^2)}^2
+\left\|\frac{1}{\lambda^2(t)}\nabla{E_2}(t)\right\|_{L^2(\mathbb{R}^2)}^2
\\[0.4cm]
&\displaystyle\qquad\quad+\frac{1}{2}\left\|\frac{1}{\lambda^2(t)}
n(t)\right\|_{L^2(\mathbb{R}^2)}^2
+\frac{1}{2}\left\|\frac{1}{\lambda^2(t)}\textbf{v}(t)\right\|_{L^2(\mathbb{R}^2)}^2
\\[0.4cm]
 &\displaystyle\qquad\quad+\int_{\mathbb{R}^2}^{}\frac{1}{\lambda^4(t)}n(t)
 \left( \left|E_1(t)\right|^2+ \left|E_2(t)\right|^2\right)\,dx
 \\[0.4cm]
  &\displaystyle\qquad\quad-\eta\int_{\mathbb{R}^2}^{}
 \frac{1}{\lambda^4(t)} \left|E_1(t)\right|^2 \left|E_2(t)\right|^2\,dx
 \\[0.4cm]

 &\displaystyle\qquad\quad+\frac{\eta}{2}\int_{\mathbb{R}^2}^{}\frac{1}{\lambda^4(t)}
 \left(\left(E_1\left(t\right)\right)^2\left(\overline{E_2}\left(t\right)\right)^2
 +\left(\overline{E_1}\left(t\right)\right)^2\left(E_2\left(t\right)\right)^2
 \right)\,dx
 \\[0.4cm]
 &\displaystyle\qquad=\frac{1}{\lambda^2(t)}\mathcal{H}
 \left(E_1,E_2,n,\textbf{v}\right)(t)
 \\[0.4cm]
 &\displaystyle\qquad=\frac{1}{\lambda^2(t)}\mathcal{H}
 \left(E_{10},E_{20},n_0,\textbf{v}_0\right),
\end{array}
\right.\eqno(2.25)
$$
which is just the estimate (2.7). This finishes the proof of Proposition 2.3.
$$\eqno{\Box}$$

\section{Estimates for the rescaled Zakharov system (2.3)}

In this section, we first establish some a priori estimates for the rescaled Zakharov system (2.3) in order to gain the optimal lower bound for the blow-up rate of the finite time blow-up solution to the Zakharov system (1.1). For simplicity, we denote $\left(\tilde E_1,\tilde E_2,\tilde n,\tilde{\textbf{v}}\right)(t,s,x)$ by $\left(\tilde E_1,\tilde E_2,\tilde n,\tilde{\textbf{v}}\right)(s)$.\\
\indent  We now claim the following conclusion concerning some a priori estimates for the solution $\left(\tilde E_1,\tilde E_2,\tilde n,\tilde{\textbf{v}}\right)(s)$ to the re-scaled Zakharov system (2.3).\\

\begin{theorem}\label{3.1} (A priori estimates on $\left(\tilde E_1,\tilde E_2,\tilde n,\tilde{\textbf{v}}\right)(s)$)
\\
  Let $(E_1,E_2,n,\mathbf{v})(t)$ be a solution of the Zakharov system (1.1), $\left(\tilde E_1,\tilde E_2,\tilde n,\tilde{\textbf{v}}\right)(s) $ be a solution of the rescaled Zakharov system (2.3) and $T$ be the blow-up time. Suppose that the initial mass $(E_{10},E_{20})$ satisfies
$$
\frac{\|Q\|_{L^2(\mathbb{R}^2)}^2}{1+\eta}<\left\|E_{10}\right\|_{L^2(\mathbb{R}^2)}^2
+\left\|E_{20}\right\|_{L^2(\mathbb{R}^2)}^2
<\frac{\|Q\|_{L^2(\mathbb{R}^2)}^2}{\eta},
\eqno(3.1)
$$
where~$Q$ is the unique radial positive solution to the equation
$$
-\Delta V+V=V^3,\eqno(3.2)
$$
 then there are constants $\theta_{0}>0$,~and~$A>0$ depending only on the initial data such that
$$
\forall s\in[0,\theta_{0}),~~\left\|\left(\tilde E_1,\tilde E_2,\tilde n,\tilde{\textbf{v}}\right)(s)\right\|_{H_1}\leqslant A~~\mbox{for} ~~t~~\mbox{near}~~ T.\eqno(3.3)
$$
Furthermore, we can choose ~ $\theta_0=\tilde{c}\left(\left\|E_{10}\right\|_{L^2(\mathbb{R}^2)}^2
+\left\|E_{20}\right\|_{L^2(\mathbb{R}^2)}^2
-\frac{\|Q\|_{L^2(\mathbb{R}^2)}^2}{1+\eta}\right)^{-\frac{1}{2}}$.
\end{theorem}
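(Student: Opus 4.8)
The plan is to run the argument on the rescaled solution $(\tilde E_1,\tilde E_2,\tilde n,\tilde{\mathbf v})(s)$ supplied by Proposition~\ref{2.3}, exploiting its three structural features: unit energy-norm at $s=0$ (estimate (2.5)), conserved mass $\|\tilde E_1(s)\|_{L^2}^2+\|\tilde E_2(s)\|_{L^2}^2=M:=\|E_{10}\|_{L^2}^2+\|E_{20}\|_{L^2}^2$ (estimate (2.6)), and vanishing Hamiltonian $\mathcal H(\tilde E_1,\tilde E_2,\tilde n,\tilde{\mathbf v})(s)=\lambda^{-2}(t)\mathcal H_0\to0$ as $t\to T$ (estimate (2.7)). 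First I would reduce the full $\BH_1$-bound (3.3) to an a priori bound on the Schrödinger gradient $G(s):=\|\nabla\tilde E_1(s)\|_{L^2}^2+\|\nabla\tilde E_2(s)\|_{L^2}^2$ alone, and then control $G(s)$ on $[0,\theta_0)$ by a compactness argument in the regime $\lambda(t)\to\infty$.

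For the reduction I would first simplify the two magnetic quartic terms in (2.7). Writing $w=\tilde E_1\ov{\tilde E_2}$, one has the pointwise identity $-\eta|\tilde E_1|^2|\tilde E_2|^2+\tfrac{\eta}{2}\big((\tilde E_1)^2(\ov{\tilde E_2})^2+(\ov{\tilde E_1})^2(\tilde E_2)^2\big)=-2\eta(\mathrm{Im}\,w)^2\le0$. Completing the square in the density coupling as $\tfrac12\|\tilde n\|_{L^2}^2+\int_{\mathbb R^2}\tilde n\,\rho=\tfrac12\|\tilde n+\rho\|_{L^2}^2-\tfrac12\|\rho\|_{L^2}^2$ with $\rho:=|\tilde E_1|^2+|\tilde E_2|^2$ then rewrites (2.7) as $G+\tfrac12\|\tilde n+\rho\|_{L^2}^2+\tfrac12\|\tilde{\mathbf v}\|_{L^2}^2=\lambda^{-2}\mathcal H_0+\tfrac12\|\rho\|_{L^2}^2+2\eta\int_{\mathbb R^2}(\mathrm{Im}\,w)^2$. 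Bounding the last two terms by $(1+\eta)\big(\|\tilde E_1\|_{L^4}^4+\|\tilde E_2\|_{L^4}^4\big)$ and invoking Weinstein's inequality (Lemma~\ref{2.2}) together with $\|\tilde E_i\|_{L^2}^2\le M$ controls them by a constant multiple of $G$ governed by $(1+\eta)M/\|Q\|_{L^2}^2$, so that a bound on $G(s)$ transfers to $\tfrac12\|\tilde n\|_{L^2}^2+\tfrac12\|\tilde{\mathbf v}\|_{L^2}^2$ and hence to the whole $\BH_1$-norm. The upper restriction $M<\|Q\|_{L^2}^2/\eta$ in (3.1) is what keeps the magnetic quartic term strictly subordinate so that these constants are admissible.

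The heart of the matter is then the bound on $G$ over $[0,\theta_0)$, and this is where the main obstacle lies: differentiating $G(s)$ in $s$ using (2.3a)--(2.3b) produces the large factor $\lambda(t)$ together with terms such as $\int_{\mathbb R^2}\tilde n\,\tilde E_1\,\ov{\Delta\tilde E_1}$ that lose a derivative on $\tilde n$, which is only controlled in $L^2$; consequently no direct differential inequality closes. I would instead argue by contradiction and compactness, in the spirit of Merle \cite{25Frank}. Fix a threshold $A$ large enough and set $s^*(t)=\sup\{s:\ \|(\tilde E_1,\tilde E_2,\tilde n,\tilde{\mathbf v})(\sigma)\|_{\BH_1}\le A\ \text{for all}\ \sigma\in[0,s]\}$, which is positive by (2.5) and continuity. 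Assuming the conclusion fails, I would find $t_k\to T$ with $s^*(t_k)\to s_*<\theta_0$; on $[0,s^*(t_k))$ the rescaled solutions are uniformly bounded in $\BH_1$, so up to a subsequence they converge weakly in $H^1$ and strongly in $L^2_{\mathrm{loc}}$, and Lemma~\ref{2.1} allows passage to the limit in every quadratic and cubic nonlinearity as well as in the coupling $\int_{\mathbb R^2}\tilde n\,\rho$. The lower bound $M>\|Q\|_{L^2}^2/(1+\eta)$, combined with the unit normalization, is used to rule out a trivial (vanishing) limit.

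Because of the prefactor $\lambda^{-1}(t)\to0$ in (2.3a)--(2.3b), the limiting Schrödinger equations degenerate to the elliptic relations $\Delta\bar E_j-\bar n\bar E_j+\eta(\cdots)=0$, while the wave equations (2.3c)--(2.3d) pass to the limit unchanged; together with the conserved mass $M$ and the vanishing Hamiltonian this pins down a rigid limiting profile whose concentration is governed by a virial-type quantity $V(s)=\int_{\mathbb R^2}|x|^2\bar\rho$. The decisive computation is a localized virial/concentration estimate for this limit in which Weinstein's inequality produces precisely the deficit $\delta:=M-\|Q\|_{L^2}^2/(1+\eta)$ as the coefficient of the destabilizing term, forcing $|V''(s)|\lesssim\delta$ and $|V'(0)|\lesssim\sqrt{\delta}$ while $V(0)\sim1$; hence $V$ cannot reach the concentration scale before a time of order $\delta^{-1/2}$. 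Choosing $\theta_0=\tilde c\,\delta^{-1/2}$ with $\tilde c$ small then contradicts $s_*<\theta_0$, yielding $s^*(t)\ge\theta_0$ for $t$ near $T$, which is exactly (3.3). I expect this virial step on the limiting profile to be the principal technical work: both the derivative loss of the Schrödinger--wave coupling and the extra magnetic nonlinearity must be shown to be compatible with the critical Weinstein bound, and it is the quadratic-in-$s$ behaviour of $V$ that converts the small deficit $\delta$ into the inverse-square-root law for $\theta_0$.
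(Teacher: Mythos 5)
Your overall frame --- rescale via Proposition 2.3, use the unit normalization (2.5), the conserved mass (2.6) and the vanishing Hamiltonian (2.7), reduce the full $\BH_1$ bound to a bound on $G(s)=\|\nabla\tilde E_1\|_{L^2}^2+\|\nabla\tilde E_2\|_{L^2}^2$ through Weinstein's inequality and the mass condition, and then run a continuity/contradiction argument on the first exit time from the ball of radius $A$ --- matches the paper (Proposition 3.3, Corollary 3.4, and the definition of $\theta(t)$ in Subsection 3.4). The gap is at the decisive step, where you replace the paper's mechanism by a virial argument on the limiting profile. The quantity $V(s)=\int_{\R^2}|x|^2\bar\rho\,dx$ is not usable here: there is no weighted integrability for general $H^1$ data, and, more seriously, the virial identity does not survive the rescaling. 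From (2.3a)--(2.3b) one gets $\frac{d}{ds}\int|x|^2|\tilde E_j|^2\,dx=4\lambda(t)\,\mathrm{Im}\int \ov{\tilde E_j}\,x\cdot\nabla\tilde E_j\,dx$, i.e.\ the time derivative carries the unbounded factor $\lambda(t)$; and in the limit the Schr\"odinger equations degenerate (as you note) to elliptic relations in which all information about $\partial_s\tilde E_j$ has been destroyed, so no second-order ODE for $V$ can be extracted from the limiting profile. The claimed bounds $|V''|\lesssim\delta$ and $|V'(0)|\lesssim\sqrt\delta$ are therefore not derived and I do not see how they could be; as written, this step fails.

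What the paper actually uses is a first-order transport mechanism on the wave part, which is $\lambda$-free. From $\tilde n_s=-\nabla\cdot\tilde{\textbf{v}}$ one gets, for a fixed bump $\psi_0\in C_0^\infty$, the estimate $\left|\int\tilde n(\theta(t))\psi_0-\int\tilde n(0)\psi_0\right|\leqslant\|\nabla\psi_0\|_{L^2}\int_0^{\theta(t)}\|\tilde{\textbf{v}}(s)\|_{L^2}\,ds$ ((3.202)--(3.204)). The compactness and non-vanishing machinery (Propositions 3.5--3.12, Corollary 3.11) shows that $\tilde n(\theta(t_n))$ converges weakly to some $N'$ with $\|N'\|_{L^2}\geqslant Ac_1\geqslant 4$ and nontrivial local concentration, so the left-hand side is at least $2-\sqrt{2}>0$ while $\|\tilde n(0)\|_{L^2}\leqslant\sqrt{2}$; hence $\int_0^{\theta(t)}\|\tilde{\textbf{v}}(s)\|\,ds\geqslant c$ (Proposition 3.17). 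Combined with the uniform bound $\|\tilde{\textbf{v}}(s)\|_{L^2}\leqslant Ac_2$ this gives $\theta(t)\geqslant\theta_0>0$, and in Proposition 4.1 the sharper bound $\|\tilde{\textbf{v}}(s)\|_{L^2}^2\lesssim \delta$ (obtained from the Hamiltonian identity plus Weinstein, and this is where $M>\|Q\|_{L^2}^2/(1+\eta)$ really enters --- not, as you suggest, to rule out a vanishing limit, which is handled by (2.5) and the upper mass bound in Proposition 3.3) yields $\theta_0\gtrsim\delta^{-1/2}$. If you replace your virial step by this wave-equation transport estimate, the rest of your outline goes through.
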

\begin{remark}\label{3.2}
Theorem 3.1 is a crucial ingredient to show the main result (Theorem 1.3), whereas condition (3.1) being a key point for gaining the optimal lower bound on the  blow-up rate.  \hfill$\Box$
\end{remark}
\indent Ahead of proving Theorem 3.1, we first establish the geometrical estimates on the solution $\left(\tilde E_1,\tilde E_2,\tilde n,\tilde{\textbf{v}}\right)$ to the re-scaled Zakharov system (2.3). These estimates concern Sobolev type estimates for $\left(\tilde E_1,\tilde E_2,\tilde n,\tilde{\textbf{v}}\right)(0)$, nonvanishing properties of $\left(\tilde E_1,\tilde E_2,\tilde n\right)(0)$ and compactness properties of $\left(\tilde E_1,\tilde E_2,\tilde n\right)(0)$. We shall consider them in four portions:\\
\indent $\diamondsuit$ 3.1\quad  Sobolev Estimates on $\left(\tilde E_1(0),\tilde E_2(0),\tilde n(0),\tilde{\textbf{v}}(0)\right)$ for $t$ near $T$;\\
\indent $\diamondsuit$ 3.2 \quad Non-vanishing properties of the solutions to the re-scaled Zakharov system as $t$ near $T$;\\
 \indent $\diamondsuit$ 3.3 \quad Compactness of the solution to the re-scaled Zakharov system (2.3);\\
 \indent $\diamondsuit$ 3.4 \quad Proof of Theorem 3.1.\\
\subsection{\Large Sobolev Estimates on $\left(\tilde E_1(0),\tilde E_2(0),\tilde n(0),\tilde{\textbf{v}}(0)\right)$ for $t$ near $T$}
\qquad\\
\\
\indent The Sobolev estimates on $\left(\tilde E_1(0),\tilde E_2(0),\tilde n(0),\tilde{\textbf{v}}(0)\right)$ is given as follow.
\begin{proposition}\label{3.3}
Let $(E_1(t),E_2(t),n(t),\textbf{v}(t))$ be the finite time blow-up solution to the Cauchy problem (1.1)-(1.2) on $  t\in[0,T)$, and $T$ be the blow-up time. Suppose that the initial mass satisfies
$$
\frac{1}{1+\eta}\|Q\|_{L^2(\mathbb{R}^2)}^2<\|E_{10}\|_{L^2(\mathbb{R}^2)}^2
+\|E_{20}\|_{L^2(\mathbb{R}^2)}^2<\frac{1}{\eta}\|Q\|_{L^2(\mathbb{R}^2)}^2,\eqno(3.4)
$$
then there are constants
 $\delta_1>0,~c_1>0,~c_2>0$ and $c_{3}>0$ depending only on $(E_{10},E_{20},n_{0},\textbf{v}_{0})$, such that for
 $  t\in[T-\delta_1,T)$, the solution $\left(\tilde{E}_{1},\tilde{E}_{2},\tilde{n},\tilde{\textbf{v}}\right)(s)$ to the re-scaled Zakharov system (2.3) admits

$$ 0<c_1\leqslant\left(\left\|\nabla\tilde E_1(0)\right\|^2_{L^2(\mathbb{R}^2)}+\left\|\nabla\tilde E_2(0)\right\|^2_{L^2(\mathbb{R}^2)}\right)^{\frac{1}{2}}\leqslant c_2,\eqno(3.5)$$
\\[0.2cm]
$$ 0<c_1\leqslant\left\|\tilde n(0)\right\|_{L^2(\mathbb{R}^2)}\leqslant c_3,\eqno(3.6)$$
\\[0.2cm]
$$ ~0\leqslant\left\|\tilde{\textbf{v}}(0)\right\|_{L^2(\mathbb{R}^2)}\leqslant c_3.\eqno(3.7)$$
 \end{proposition}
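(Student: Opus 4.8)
The plan is to read the upper bounds of (3.5)--(3.7) off the normalization (2.5), and to extract the lower bounds from the decay of the rescaled Hamiltonian; essentially all the difficulty lies in the lower bound (3.6) for $\|\tilde n(0)\|_{L^2}$. First I would dispose of the upper bounds: by (2.5) the four nonnegative quantities $\|\nabla\tilde E_1(0)\|_{L^2}^2$, $\|\nabla\tilde E_2(0)\|_{L^2}^2$, $\tfrac12\|\tilde n(0)\|_{L^2}^2$ and $\tfrac12\|\tilde{\mathbf v}(0)\|_{L^2}^2$ sum to $1$, so each is $\le 1$; this gives the right-hand inequalities of (3.5)--(3.7) with $c_2=1$, $c_3=\sqrt2$, and the trivial lower bound in (3.7). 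For the lower bounds I would start from the Hamiltonian identity (2.7). Since $(E_1,E_2,n,\mathbf v)$ blows up at $T$, $\lambda^2(t)\to\infty$, hence $\mathcal{H}(\tilde E_1,\tilde E_2,\tilde n,\tilde{\mathbf v})(0)=\lambda^{-2}(t)\,\mathcal{H}_0\to 0$ as $t\to T$. Writing out this Hamiltonian and using (2.5) to replace its four quadratic terms by $1$, we obtain (writing $m_0:=\|E_{10}\|_{L^2}^2+\|E_{20}\|_{L^2}^2$ and suppressing the argument $(0)$)
\[
1+\intr \tilde n\big(|\tilde E_1|^2+|\tilde E_2|^2\big)\,dx-\frac{\eta}{2}\intr\big|\tilde E_1\ov{\tilde E_2}-\tilde E_2\ov{\tilde E_1}\big|^2\,dx=\frac{\mathcal{H}_0}{\lambda^2(t)}.
\]
Consequently $\frac{\eta}{2}\intr|\tilde E_1\ov{\tilde E_2}-\tilde E_2\ov{\tilde E_1}|^2-\intr\tilde n(|\tilde E_1|^2+|\tilde E_2|^2)\to 1$, so this quantity is $\ge 1-\varepsilon(t)$ with $\varepsilon(t):=|\mathcal{H}_0|\,\lambda^{-2}(t)\to0$.

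Next I would estimate the two nonlinear integrals in terms of $G:=(\|\nabla\tilde E_1\|_{L^2}^2+\|\nabla\tilde E_2\|_{L^2}^2)^{1/2}$ and $N:=\|\tilde n\|_{L^2}$. Applying the scalar Weinstein inequality (Lemma 2.2) to $\tilde E_1$ and $\tilde E_2$ separately, handling the cross term by Cauchy--Schwarz, and inserting the conserved mass (2.6) $\|\tilde E_1\|_{L^2}^2+\|\tilde E_2\|_{L^2}^2=m_0$, gives the vectorial bound $\intr(|\tilde E_1|^2+|\tilde E_2|^2)^2\le \frac{2m_0}{\|Q\|_{L^2}^2}\,G^2$. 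By Cauchy--Schwarz this yields $|\intr\tilde n(|\tilde E_1|^2+|\tilde E_2|^2)|\le \sqrt{2m_0}\,\|Q\|_{L^2}^{-1}\,N\,G$, while the pointwise inequality $|\tilde E_1\ov{\tilde E_2}-\tilde E_2\ov{\tilde E_1}|^2\le(|\tilde E_1|^2+|\tilde E_2|^2)^2$ gives $\frac{\eta}{2}\intr|\tilde E_1\ov{\tilde E_2}-\tilde E_2\ov{\tilde E_1}|^2\le \beta\,G^2$ with $\beta:=\eta\,m_0\,\|Q\|_{L^2}^{-2}$. The decisive consequence of the \emph{upper} mass constraint in (3.4) is that $\beta<1$.

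Finally I would close the lower bounds. Using $G\le1$ and $N\le\sqrt2$ (both from (2.5)), the relation above gives $1-\varepsilon(t)\le\beta G^2+\sqrt{2m_0}\,\|Q\|_{L^2}^{-1}NG\le(\beta+2\sqrt{m_0}\,\|Q\|_{L^2}^{-1})\,G$, whence $G$ is bounded below by a positive constant once $\varepsilon(t)<\tfrac12$, which is (3.5). For (3.6) I would instead bound the magnetic term by $\beta G^2\le\beta$ (i.e.\ use $G\le1$, not $G^2\le G$): then $1-\varepsilon(t)\le\beta+\sqrt{2m_0}\,\|Q\|_{L^2}^{-1}N$, so $\sqrt{2m_0}\,\|Q\|_{L^2}^{-1}N\ge(1-\beta)-\varepsilon(t)$. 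Since $1-\beta>0$, I fix $\delta_1>0$ so that $\varepsilon(t)<\tfrac{1-\beta}{2}$ for $t\in[T-\delta_1,T)$; this forces $N\ge\frac{1-\beta}{2}\|Q\|_{L^2}(2m_0)^{-1/2}>0$, which is (3.6). Taking $c_1$ to be the smaller of the two lower constants completes the argument.

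The main obstacle is precisely the lower bound (3.6). The magnetic energy does not involve $\tilde n$, so a priori $\tilde n(0)$ could degenerate as $t\to T$ with the entire deficit ``$1$'' being carried by the magnetic term; what rules this out is exactly $\beta<1$, i.e.\ the upper bound $m_0<\|Q\|_{L^2}^2/\eta$, which keeps the magnetic energy strictly below $1$ and thereby obliges $\tilde n(0)$ to stay away from $0$. Isolating this mechanism---and spending the budget $G\le1$ correctly (as $G^2\le1$ rather than $G^2\le G$) in the two estimates---is the crux of the proof.
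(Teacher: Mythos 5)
Your proposal is correct and follows essentially the same route as the paper: upper bounds from the normalization (2.5), and lower bounds by combining the decay $\mathcal{H}/\lambda^2(t)\to 0$ with the vectorial Gagliardo--Nirenberg bound $\int(|\tilde E_1|^2+|\tilde E_2|^2)^2\le \tfrac{2m_0}{\|Q\|^2}G^2$ and the observation that the upper mass constraint keeps the magnetic term strictly below $1$, forcing the $\tilde n$-coupling (hence $\|\tilde n(0)\|_{L^2}$) away from zero. The only difference is cosmetic packaging: the paper first extracts an intermediate lower bound on the $L^4$-type norm via a Young-inequality splitting before applying Gagliardo--Nirenberg, whereas you fold these steps into a single direct estimate.
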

\qquad \\
\begin{proof}
From (2.5) it follows that
$$
\left\{
\begin{array}{ll}
&\displaystyle\left(\left\|\nabla\tilde E_1(0)\right\|_{L^2(\mathbb{R}^2)}^2+\left\|\nabla\tilde E_2(0)\right\|_{L^2(\mathbb{R}^2)}^2\right)^{\frac{1}{2}}\leqslant 1,
\\[0.4cm]
  &\displaystyle\left\|\tilde n(0)\right\|_{L^2(\mathbb{R}^2)}\leqslant \sqrt{2},~~\left\|\tilde{\textbf{v}}(0)\right\|_{L^2(\mathbb{R}^2)}\leqslant \sqrt{2}.
  \end{array}
\right.\eqno(3.8)
$$
Note that $\lambda(t)\to+\infty$ as $t\to T$, by (2.7), there exists $\delta_1>0$ such that for any $
t\in[T-\delta_1,T)$,
$$\left|\mathcal{H}\left(\tilde E_1(0),\tilde E_2(0),\tilde n(0),\tilde{\textbf{v}}(0)\right)\right|=\left|\frac{\mathcal{H}_0}{\lambda^2(t)}\right|\leqslant \frac{1}{64}.\eqno(3.9)$$
Hence (2.5) and (2.7) yield
$$
\left.
\begin{array}{ll}
  1&\displaystyle= \int_{\mathbb{R}^2}|\nabla   \tilde E_1(0)|^2dx+\int_{\mathbb{R}^2}|\nabla \tilde E_2(0)|^2dx
\\[0.4cm]
&\displaystyle \quad+\frac{1}{2}\int_{\mathbb{R}^2}|\tilde n(0)|^2dx+\frac{1}{2}\int_{\mathbb{R}^2}|\tilde{\textbf{v}}(0)|^2 dx
\\[0.4cm]
&\displaystyle \leqslant \frac{1}{64} -\int_{\mathbb{R}^2}\tilde n(0)\left(|\tilde E_1(0)|^2+|\tilde E_2(0)|^2\right)dx
\\[0.4cm]
&\displaystyle \quad+ \frac{\eta}{2}\int_{\mathbb{R}^2}\left|\overline{\tilde E_1}(0) \tilde E_2(0) - \tilde E_1(0) \overline{\tilde E_2}(0) \right|^2dx.
\end{array}
\right.\eqno(3.10)
$$
Since $\overline{\tilde E_1}\tilde E_2 $ and $\tilde E_1\overline{\tilde E_2}$ are conjugate complex-valued functions, we claim the following estimate for the quadric term $\displaystyle \int_{\mathbb{R}^2}\left|\overline{\tilde E_1}(0)\tilde E_2(0)-\tilde E_1(0)\overline{\tilde E_2}(0) \right|^2dx$:
$$
\left.
\begin{array}{ll}
&\displaystyle
\frac{1}{2}\int_{\mathbb{R}^2}\left|\overline{\tilde E_1}(0) \tilde E_2(0) -   \tilde E_1(0) \overline{\tilde E_2}(0)\right |^2dx
  \\[0.4cm]
 &\displaystyle \qquad\leqslant 2 \int_{\mathbb{R}^2} |\tilde E_1(0)|^2| \tilde E_2(0)|^2 dx
 \\[0.4cm]
&\displaystyle \qquad  \leqslant 2 \int_{\mathbb{R}^2}  \left(\frac{|\tilde E_1(0)|^2 +| \tilde E_2(0)|^2}{2}\right)^2dx
\\[0.4cm]
&\displaystyle \qquad  =  \frac{1}{2} \int_{\mathbb{R}^2} ( |\tilde E_1(0)|^2 +| \tilde E_2(0)|^2  )^2dx.
 \end{array}
\right.\eqno(3.11)
$$
On the other hand, it follows from the H\"{o}lder inequality that
$$
\left.
\begin{array}{ll}
&\displaystyle
 \int_{\mathbb{R}^2} -\tilde n(0)\left ( |\tilde E_1(0)|^2 + |\tilde E_2(0)|^2\right)dx
 \\[0.5cm]
 &\displaystyle\qquad \leqslant  \left( b^2 \int_{\mathbb{R}^2} |\tilde n(0)|^2 dx\right)^{\frac{1}{2}}
  \left(\frac{1}{b^2} \int_{\mathbb{R}^2} \left( |\tilde E_1(0)|^2 + |\tilde E_2(0)|^2\right)^2 dx \right)^{\frac{1}{2}}
  \\[0.5cm]
  &\displaystyle\qquad \leqslant  \frac{b^2}{2} \int_{\mathbb{R}^2} |\tilde n(0)|^2dx + \frac{1}{2b^2}\int_{\mathbb{R}^2} \left( |\tilde E_1(0)|^2 + |\tilde E_2(0)|^2 \right)^2dx.
  \end{array}
\right.\eqno(3.12)
$$
Let $b^2 = \frac{1}{2}$. Combining (3.10) with (3.11) and (3.12) yields
$$\frac{3}{4} \leqslant  \frac{1}{4} \int_{\mathbb{R}^2} |\tilde n(0)|^2 dx+ \left(\frac{\eta}{2} + 1\right) \int_{\mathbb{R}^2} \left(|\tilde E_1(0)|^2 + |\tilde E_2(0)|^2\right)^2dx.\eqno(3.13)$$
Due to $\|\tilde n(0)\|_{L^{2}(\mathbb{R}^2)}^2 \leqslant 2$, (3.13) implies
$$\int_{\mathbb{R}^2} \left(|\tilde E_1(0)|^2 + |\tilde E_2(0)|^2\right)^2dx > \frac{1}{4 + 2 \eta}.\eqno(3.14)$$
Using the Gagliardo-Nirenberg inequality(Lemma 2.2), one gets the estimate for  $\displaystyle\int_{\mathbb{R}^2} \left(|\tilde E_1(0)|^2 + |\tilde E_2(0)|^2\right)^2dx$:
\\[0.2cm]
$$
\left.
\begin{array}{ll}
&\displaystyle \int_{\mathbb{R}^2} \left(|\tilde E_1(0)|^2 + |\tilde E_2(0)|^2\right)^2dx
\\[0.4cm]
&\displaystyle= \int_{\mathbb{R}^2} \left(|\tilde E_1(0)|^4 + |\tilde E_2(0)|^4\right)dx + 2 \int_{\mathbb{R}^2} |\tilde E_1(0)|^2 |\tilde E_2(0)|^2dx
  \\[0.4cm]
&\displaystyle \leqslant \frac{2 \|\tilde E_1(0)\|_{L^{2}(\mathbb{R}^2)}^2}{\|Q\|_{L^{2}(\mathbb{R}^2)}^2} \int_{\mathbb{R}^2} |\nabla \tilde E_1(0)|^2dx + \frac{2 \|\tilde E_2(0)\|_{L^{2}(\mathbb{R}^2)}^2}{\|Q\|_{L^{2}(\mathbb{R}^2)}^2} \int_{\mathbb{R}^2} |\nabla \tilde E_2(0)|^2dx
\\[0.4cm]
&\displaystyle\quad+ 2\left(\int_{\mathbb{R}^2} |\tilde E_1(0)|^4dx\right)^{\frac{1}{2}} \left(\int_{\mathbb{R}^2}  |\tilde E_2(0)|^4 dx\right)^{\frac{1}{2}}
\\[0.4cm]
&\displaystyle \leqslant  \frac{2 \|\tilde E_1(0)\|_{L^{2}(\mathbb{R}^2)}^2}{\|Q\|_{L^{2}(\mathbb{R}^2)}^2} \int_{\mathbb{R}^2} |\nabla \tilde E_1(0)|^2dx + \frac{2 \|\tilde E_2(0)\|_{L^{2}(\mathbb{R}^2)}^2}{\|Q\|_{L^{2}(\mathbb{R}^2)}^2} \int_{\mathbb{R}^2} |\nabla \tilde E_2(0)|^2dx
\\[0.4cm]
&\displaystyle\quad + 4  \frac{ \|\tilde E_1(0)\|_{L^{2}(\mathbb{R}^2)} \|\tilde E_2(0)\|_{L^{2}(\mathbb{R}^2)} }{\|Q\|_{L^{2}(\mathbb{R}^2)}^2} \| \nabla \tilde E_1(0)\|_{L^{2}(\mathbb{R}^2)} \| \nabla \tilde E_2(0)\|_{L^{2}(\mathbb{R}^2)}
\\[0.4cm]
 &\displaystyle \leqslant \frac{2}{\|Q\|_{L^{2}(\mathbb{R}^2)}^2}
  \\[0.4cm]
 &\displaystyle\quad\cdot \left( \|\tilde E_1(0)\|_{L^{2}(\mathbb{R}^2)}^2  \| \nabla \tilde E_1(0)\|_{L^{2}(\mathbb{R}^2)}^2 + \|\tilde E_2(0)\|_{L^{2}(\mathbb{R}^2)}^2\|\nabla \tilde E_1(0)\|_{L^{2}(\mathbb{R}^2)}^2\right.
\\[0.4cm]
 &\displaystyle\quad  \left.+ \|\tilde E_1(0)\|_{L^{2}(\mathbb{R}^2)}^2 \|\nabla \tilde E_2(0)\|_{L^{2}(\mathbb{R}^2)}^2
 + \|\tilde E_2(0)\|_{L^{2}(\mathbb{R}^2)}^2 \|\nabla \tilde E_2(0)\|_{L^{2}(\mathbb{R}^2)}^2 \right)
 \\[0.4cm]
&\displaystyle =  \frac{2}{\|Q\|_{L^{2}(\mathbb{R}^2)}^2}\left (\| \tilde E_1(0)\|_{L^{2}(\mathbb{R}^2)}^2 + \| \tilde E_2(0)\|_{L^{2}(\mathbb{R}^2)}^2\right)
\\[0.4cm]
 &\displaystyle\qquad\qquad\qquad\cdot\left(\| \nabla \tilde E_1(0)\|_{L^{2}(\mathbb{R}^2)}^2 + \| \nabla \tilde E_2(0)\|_{L^{2}(\mathbb{R}^2)}^2\right).
\end{array}
\right.\eqno(3.15)
$$
This together with (3.14) yields
$$
\left.
\begin{array}{ll}
\displaystyle
\frac{1}{8+4 \eta} &\displaystyle\leqslant \frac{\left\|\tilde E_1(0)\right\|_{L^{2}(\mathbb{R}^2)}^2 + \left\|  \tilde E_2(0)\right\|_{L^{2}(\mathbb{R}^2)}^2}{\|Q\|_{L^{2}(\mathbb{R}^2)}^2}
\\[0.6cm]
&\quad\displaystyle\cdot \left(\left\| \nabla \tilde E_1(0)\right\|_{L^{2}(\mathbb{R}^2)}^2 + \left\| \nabla \tilde E_2(0)\right\|_{L^{2}(\mathbb{R}^2)}^2 \right),
\end{array}
\right.\eqno(3.16)
$$
and the conclusion (3.5) follows from (2.5) and the mass identity (2.6).\\
\indent In the following we prove the  conclusions (3.6) and (3.7). In view of condition (3.4), we can assume that there exists a sufficiently small $\delta_{0}$ with $0<\delta_{0}<\frac{1}{1+\eta}$ such that
$$\| E_1(0)\|_{L^{2}(\mathbb{R}^2)}^2+\| E_2(0)\|_{L^{2}(\mathbb{R}^2)}^2 < \frac{1-\delta_{0}}{\eta}  \|Q\|_{L^{2}(\mathbb{R}^2)}^2.\eqno(3.17)$$
Then from (3.8) it follows that
$$
\left.
\begin{array}{ll}
&\displaystyle\frac{\eta}{2}\int_{\mathbb{R}^2}\left|\overline{\tilde E_1}(0) \tilde E_2(0)  - \tilde E_1(0) \overline{\tilde E_2}(0) \right|^2dx
\\[0.4cm]
&\displaystyle\qquad\leqslant  2 \eta\int_{\mathbb{R}^2} |\tilde E_1(0)|^2| \tilde E_2(0)|^2 dx \\[0.4cm]
&\displaystyle \qquad\leqslant 2 \eta \left(\int_{\mathbb{R}^2} |\tilde E_1(0)|^4dx\right)^\frac{1}{2} \left(\int_{\mathbb{R}^2} |\tilde E_2(0)|^4dx\right)^\frac{1}{2}
\\[0.4cm]
&\displaystyle\qquad\leqslant 4 \eta \frac{\|\tilde E_1(0)\|_{L^{2}(\mathbb{R}^2)} \|\tilde E_2(0)\|_{L^{2}(\mathbb{R}^2)} \|\nabla \tilde E_1(0)\|_{L^{2}(\mathbb{R}^2)}\|\nabla \tilde E_2(0)\|_{L^{2}(\mathbb{R}^2)} } {\|Q\|_{L^{2}(\mathbb{R}^2)}^2}
\\[0.4cm]
&\displaystyle\qquad \leqslant \eta \bigg(\|\nabla \tilde E_1(0)\|_{L^{2}(\mathbb{R}^2)}^2 + \|\nabla\tilde E_2(0)\|_{L^{2}(\mathbb{R}^2)}^2 \bigg)
\\[0.4cm]
&\displaystyle \qquad\qquad\cdot\left(\frac{\| \tilde E_1(0)\|_{L^{2}(\mathbb{R}^2)}^2 + \|\tilde E_2(0)\|_{L^{2}(\mathbb{R}^2)}^2}{\|Q\|_{L^{2}(\mathbb{R}^2)}^2}\right)
\\[0.4cm]
&\displaystyle \qquad\leqslant 1 - \delta_0.
  \end{array}
\right.\eqno(3.18)
$$
Combining (3.18) with (2.2), (2.5) and (2.7) yields
$$
\left.
\begin{array}{ll}
\delta_0 & \displaystyle\leqslant - \int_{\mathbb{R}^2}\tilde n(0) \left(|\tilde E_1(0)|^2 + |\tilde E_2(0)|^2\right)dx
 \\[0.4cm]
& \displaystyle\leqslant \left(\int_{\mathbb{R}^2}\tilde n^2(0)dx\right)^{\frac{1}{2}}
\left( \int_{\mathbb{R}^2} \left( |\tilde E_1(0)|^2 + |\tilde E_2(0)|^2\right)^2dx \right)^{\frac{1}{2}}.
\end{array}
\right.\eqno(3.19)
$$
Recalling (3.15), (3.17) yields

$$
\left.
\begin{array}{ll}
&\displaystyle\int_{\mathbb{R}^2} \left( |\tilde E_1(0)|^2 + |\tilde E_2(0)|^2\right)^2dx
\\[0.3cm]
&\displaystyle\qquad \leqslant 2 \bigg(\|\nabla \tilde E_1(0)\|_{L^{2}(\mathbb{R}^2)}^2 + \| \nabla \tilde E_2(0)\|_{L^{2}(\mathbb{R}^2)}^2 \bigg)
\\[0.4cm]
&\displaystyle\qquad\qquad\cdot\left(\frac{\| \tilde E_1(0)\|_{L^{2}(\mathbb{R}^2)}^2 + \|\tilde E_2(0)\|_{L^{2}(\mathbb{R}^2)}^2}{\|Q\|_{L^{2}(\mathbb{R}^2)}^2} \right)
\\[0.5cm]
&\displaystyle\qquad\leq \frac{2-2 \delta_0}{\eta},
\end{array}
\right.\eqno(3.20)
$$
Note that (3.19), for any fixed small $\delta_0>0$, there exists a constant $c_1>0$ such that
$$c_1 \leqslant \frac{\eta}{2} \frac{\delta_0^2}{1-\delta_0} \leqslant \int_{\mathbb{R}^2}|\tilde n(0)|^2dx.\eqno(3.21)$$
Note that for $0<\delta<\frac{1}{1+\eta}$ and $\eta>0$, $\frac{\eta}{2} \frac{\delta_0^2}{1-\delta_0}<2$, the upper bounds for $\|\nabla \tilde E_1(0)\|_{L^{2}(\mathbb{R}^2)}^2+\|\nabla  \tilde E_2(0)\|_{L^{2}(\mathbb{R}^2)}^2$,~~$\|\tilde n(0)\|_{L^{2}(\mathbb{R}^2)}^2$~
and\\
$\|  \tilde {\textbf{v}}(0)\|_{L^{2}(\mathbb{R}^2)}^2$ follow from  (2.5). The proof of Proposition 3.3 is completed.

\end{proof}
\indent Due to the condition (2.5) for the re-scaled Zakharov system (2.3), using Proposition 3.3 and the following scaling properties:
$$
\left.
\begin{array}{ll}
&\displaystyle\left\|\nabla \tilde E_j(0)\right\|_{L^{2}(\mathbb{R}^2)}= \frac{1}{\lambda(t)} \left\|\nabla E_j(t)\right\|_{L^{2}(\mathbb{R}^2)}, ~~j=1,2,
\\[0.4cm]
&\displaystyle \|\tilde n(0)\|_{L^{2}(\mathbb{R}^2)} = \frac{1}{\lambda(t)} \| n(t)\|_{L^{2}(\mathbb{R}^2)},~~ \| \tilde{\mathbf{ v}}(0) \|_{L^{2}(\mathbb{R}^2)} = \frac{1}{\lambda(t)} \|  \mathbf{v} (t)\|_{L^{2}(\mathbb{R}^2)},
\end{array}
\right.\eqno(3.22)
$$
we claim the following Sobolev-type estimates for the solution $\left(E_{1}(t),E_{2}(t),n(t),\mathbf{v}(t)\right)$
  to the Zakharov system (1.1).\\
  \begin{corollary}\label{3.4}
  Under the assumptions in Proposition 3.3, there exists constants $\delta_{1}>0$, $c^{*}_{1}$,~$c^{*}_{2}$ depending only on initial data (1.2) such that for $t \in [T-\delta_1,T)$, there hold:
  $$c^{*}_1 \|n(t)\|_{L^{2}(\mathbb{R}^2)}\leqslant\left( \|\nabla E_1(t)\|_{L^{2}(\mathbb{R}^2)}^2 + \|\nabla E_2(t)\|_{L^{2}(\mathbb{R}^2)}^2\right)^{\frac{1}{2}} \leqslant \frac{1}{c^{*}_1} \|n(t)\|_{L^{2}(\mathbb{R}^2)},\eqno(3.23)
$$
  $$\|v(t)\|_{L^{2}(\mathbb{R}^2)}  \leqslant \frac{1}{c^{*}_1} \|n(t)\|_{L^{2}(\mathbb{R}^2)},\eqno(3.24)$$

  $$
\left.
\begin{array}{ll}
 \displaystyle c^{*}_2 \|n(t)\|_{L^{2}(\mathbb{R}^2)}& \leqslant \|(E_1,E_2,n,v)(t)\|_{H^{1}(\mathbb{R}^2)\times H^{1}(\mathbb{R}^2)
 \times L^{2}(\mathbb{R}^2)\times L^{2}(\mathbb{R}^2)}
  \\[0.3cm]
  \displaystyle &\leqslant \frac{1}{c^{*}_2} \|n(t)\|_{L^{2}(\mathbb{R}^2)}.
  \end{array}
\right.\eqno(3.25)
$$
 \end{corollary}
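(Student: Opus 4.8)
The plan is to transfer the two-sided bounds on the rescaled profile $(\tilde E_1,\tilde E_2,\tilde n,\tilde{\mathbf{v}})(0)$ furnished by Proposition 3.3 back to the original solution $(E_1,E_2,n,\mathbf{v})(t)$ via the scaling identities (3.22). The essential observation is that the normalizing factor $\lambda(t)$ enters \emph{uniformly} across all four components in (3.22); consequently, when one forms the ratio of any two of these quantities the factor $\lambda(t)$ cancels, leaving a dimensionless expression pinched between constants that do not depend on $t$. Everything then reduces to algebra on the inequalities (3.5)--(3.7).

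First I would multiply (3.5)--(3.7) through by $\lambda(t)$ and substitute (3.22), obtaining for all $t\in[T-\delta_1,T)$ the original-variable bounds
\begin{gather*}
c_1\lambda(t) \leqslant \left(\|\nabla E_1(t)\|_{L^2(\mathbb{R}^2)}^2 + \|\nabla E_2(t)\|_{L^2(\mathbb{R}^2)}^2\right)^{\frac{1}{2}} \leqslant c_2\lambda(t),\\
c_1\lambda(t) \leqslant \|n(t)\|_{L^2(\mathbb{R}^2)} \leqslant c_3\lambda(t),\\
0 \leqslant \|\mathbf{v}(t)\|_{L^2(\mathbb{R}^2)} \leqslant c_3\lambda(t).
\end{gather*}
To produce (3.23) and (3.24) I would eliminate $\lambda(t)$ by dividing the gradient bound, respectively the velocity bound, by the two-sided bound on $\|n(t)\|_{L^2(\mathbb{R}^2)}$; this yields $\frac{c_1}{c_3}\leqslant \frac{(\|\nabla E_1\|^2+\|\nabla E_2\|^2)^{1/2}}{\|n\|_{L^2}} \leqslant \frac{c_2}{c_1}$ together with $\frac{\|\mathbf{v}\|_{L^2}}{\|n\|_{L^2}} \leqslant \frac{c_3}{c_1}$, so that the single choice $c_1^\ast = \min\{c_1/c_2,\,c_1/c_3\}$ delivers both inequalities at once. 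For (3.25) the argument is even more direct: by the very definition (2.2) the full energy-space norm satisfies $\|(E_1,E_2,n,\mathbf{v})(t)\|_{\mathbb{H}_1}=\lambda(t)$, so combining this identity with the two-sided bound on $\|n(t)\|_{L^2(\mathbb{R}^2)}$ gives $\frac{1}{c_3}\|n\|_{L^2} \leqslant \|(E_1,E_2,n,\mathbf{v})\|_{\mathbb{H}_1} \leqslant \frac{1}{c_1}\|n\|_{L^2}$, and one takes $c_2^\ast=\min\{c_1,\,1/c_3\}$.

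Since every step is an algebraic manipulation of already-established two-sided bounds, I do not anticipate any genuine obstacle; the corollary is essentially a bookkeeping consequence of Proposition 3.3 and the scale invariance encoded in the definition of $\lambda(t)$. The only point requiring mild care is arranging that a \emph{single} pair $(c_1^\ast,c_2^\ast)$ serves both the lower and the upper inequality simultaneously, which is precisely why one passes to the minima of the naive ratio constants. It is worth emphasizing that the whole mechanism hinges on the strict positivity of the lower constant $c_1$ in (3.5)--(3.6) --- itself the nontrivial content of Proposition 3.3, obtained under the mass condition (3.4) --- for this is what keeps the denominators $\|n(t)\|_{L^2(\mathbb{R}^2)}$ bounded away from the degenerate regime relative to $\lambda(t)$.
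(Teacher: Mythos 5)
Your proposal is correct and follows essentially the same route as the paper: transfer the two-sided bounds (3.5)--(3.7) from the rescaled profile back to $(E_1,E_2,n,\mathbf{v})(t)$ via the scaling identities (3.22), cancel the common factor $\lambda(t)$ by taking ratios, and read off the constants (the paper simply states explicit choices $c_1^*=c_1/c_2$ and $c_2^*=\sqrt{2c_1^2/(4c_2^2+c_1^2)}$ without showing the algebra, while you track the upper bounds $c_2$ and $c_3$ separately and take minima, which is if anything slightly more careful). Your observation that $\|(E_1,E_2,n,\mathbf{v})(t)\|_{\mathbb{H}_1}=\lambda(t)$ by the definition (2.2) is exactly the mechanism the paper relies on for (3.25).
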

 \begin{proof}
 From Proposition 3.3 it follows that $c_{1}<c_{2}$. Taking $c^{*}_1=\frac{c_{1}}{c_{2}}$ and
$c^{*}_2=\sqrt{\frac{2c_{1}^{2}}{4c_{2}^{2}+c_{1}^{2}}}$
 yields the conclusion of Corollary 3.4.
 \end{proof}
 \qquad\\
 \subsection{Non-vanishing Properties of the Solutions to the Re-scaled Zakharov System as $t$ near $T$ }
 \qquad\\
 \\
 \indent We now consider the non-vanishing properties of the solution $\left(\tilde E_1(s),\tilde E_2(s), \tilde n(s)\right)$ of the rescaled magnetic Zakharov system (2.3) for $s=0$, i.e., the non-vanishing properties of $\left(\tilde E_1(0),\tilde E_2(0), \tilde n(0)\right)$.
\begin{proposition}\label{3.5}
For any $t\in[0,T)$,  suppose that $\left(E_1(t),E_2(t),n(t),\textbf{v}(t)\right)$ and\\ $\left(\tilde E_1(0),\tilde E_2(0), \tilde n(0),\tilde {\textbf{v}}(0)\right)$ is the finite time blow-up solution to the Cauchy problem (1.1)-(1.2), the initial data $\left(E_{10}(x),E_{20}(x)\right)$ satisfy condition (3.4) and $T$ be the blow-up time. Then we claim:\\
\\
(1)\quad There exist constants $R_1>0$ and $\beta_1>0$ depending only on ~$\left\|E_{10}\right\|_{L^2(\mathbb{R}^2)}$ and $\left\|E_{20}\right\|_{L^2(\mathbb{R}^2)}$
such that for a sequence $x(t)\in\mathbb{R}^2$ one has
$$\liminf_{t\to T}\left(\left\|\tilde E_1\left(0,x\right)\right\|_{L^2\left(|x-x\left(t\right)|\leqslant R_1\right)}^2+\left\|\tilde E_2\left(0,x\right)\right\|_{L^2\left(|x-x\left(t\right)|\leqslant R_1\right)}^2\right)^{\frac{1}{2}}\geqslant\beta_1,\eqno(3.26)$$

$$\liminf_{t\to T}\left\|\tilde n(0,x)\right\|_{L^2(|x-x(t)|\leqslant R_1)}\geqslant\beta_1.\eqno(3.27)
 $$
 (2)\quad Let $\left(\tilde{E}_{1n},\tilde{E}_{2n},\tilde{n}_n\right)(s)$ be a sequence satisfying the estimates as follows :
$$
\left\|\tilde{E}_{1n}(0)\right\|_{L^2(\mathbb{R}^2)}^2+\left\|\tilde{E}_{2n}(0)\right\|_{L^2(\mathbb{R}^2)}^2
\leqslant\left\|E_{10}\right\|_{L^2(\mathbb{R}^2)}^2+\left\|E_{20}\right\|_{L^2(\mathbb{R}^2)}^2,\eqno(3.28)
$$

$$
c_1\leqslant\int_{\mathbb{R}^2}^{}\left|\nabla \tilde{E}_{1n}(0)\right|^2dx+\int_{\mathbb{R}^2}^{}\left|\nabla \tilde{E}_{2n}(0)\right|^2dx\leqslant c_2,\eqno(3.29)
$$

$$
c_1\leqslant\int_{\mathbb{R}^2}^{}|\tilde{n}_n(0)|^2dx\leqslant c_2,\eqno(3.30)
$$

$$
\limsup_{t\to T}\mathcal{H}\left(\tilde{E}_{1n}(0),\tilde{E}_{2n}(0),\tilde{n}_n(0),0\right)\leqslant 0.\eqno(3.31)
$$
Then there exist  $\beta_1>0$ and $R_1>0$ depending only on $\|E_{10}\|_{L^2(\mathbb{R}^2)},~\|E_{20}\|_{L^2(\mathbb{R}^2)}$, $c_1>0$ and $c_2>0$
 such that for a sequence $x_n\in\mathbb{R}^2$,
 $$
\lim_{n\to+\infty}\left(\left\|\tilde{E}_{1n}\right\|_{L^2\left(|x-x_n|\leqslant R_1\right)}^2+\left\|\tilde{E}_{2n}\right\|_{L^2\left(|x-x_n|\leqslant R_1\right)}^2\right)^{\frac{1}{2}}\geqslant\beta_1>0,\eqno(3.32)$$

$$
\lim_{n\to+\infty}\left\|\tilde{n}_n\right\|_{L^2(|x-x_n|\leqslant R_1)}\geqslant\beta_1>0.\eqno(3.33)
$$
\end{proposition}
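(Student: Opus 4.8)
The plan is to obtain Part (1) as a special case of the abstract statement in Part (2). Given any sequence $t_k\uparrow T$, set $\tilde E_{jk}:=\tilde E_j(t_k,0,\cdot)$ and $\tilde n_k:=\tilde n(t_k,0,\cdot)$; then the mass identity (2.6) furnishes (3.28), Proposition 3.3 furnishes the bounds (3.29)--(3.30), and the energy scaling (2.7) together with $\lambda(t_k)\to+\infty$ (cf. (3.9)) gives $\mathcal H(\tilde E_{1k},\tilde E_{2k},\tilde n_k,\tilde{\mathbf v}_k)\to0$, so that dropping $\tfrac12\|\tilde{\mathbf v}_k\|_{L^2}^2\ge0$ yields (3.31). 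Thus Part (1) is exactly Part (2) applied along this sequence, and I focus on Part (2).

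Throughout write $\rho_n:=|\tilde E_{1n}|^2+|\tilde E_{2n}|^2$ and $X_n:=\int_{\mathbb R^2}\rho_n^2\,dx$. The first step is a quartic lower bound. Using the algebraic identity $-\eta|\tilde E_1|^2|\tilde E_2|^2+\tfrac\eta2\big((\tilde E_1)^2(\overline{\tilde E_2})^2+(\overline{\tilde E_1})^2(\tilde E_2)^2\big)=-\tfrac\eta2|\tilde E_1\overline{\tilde E_2}-\overline{\tilde E_1}\tilde E_2|^2$, hypothesis (3.31) with $\tilde{\mathbf v}_n=0$ becomes $\|\nabla\tilde E_{1n}\|_{L^2}^2+\|\nabla\tilde E_{2n}\|_{L^2}^2+\tfrac12\|\tilde n_n\|_{L^2}^2+\int\tilde n_n\rho_n\,dx-\tfrac\eta2\int|\tilde E_{1n}\overline{\tilde E_{2n}}-\overline{\tilde E_{1n}}\tilde E_{2n}|^2dx\le o(1)$. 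Estimating $\int\tilde n_n\rho_n\ge-\|\tilde n_n\|_{L^2}X_n^{1/2}\ge-\sqrt{c_2}\,X_n^{1/2}$ and $-\tfrac\eta2\int|\cdots|^2\ge-\tfrac\eta2 X_n$, and keeping $\|\nabla\tilde E_{1n}\|_{L^2}^2+\|\nabla\tilde E_{2n}\|_{L^2}^2+\tfrac12\|\tilde n_n\|_{L^2}^2\ge\tfrac32 c_1$ from (3.29)--(3.30), I arrive at $\tfrac\eta2 X_n+\sqrt{c_2}\,X_n^{1/2}\ge\tfrac32 c_1-o(1)$, hence $\liminf_n X_n\ge\beta>0$. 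Since $w_n:=\rho_n^{1/2}$ is bounded in $H^1$ (diamagnetic inequality together with (3.28)--(3.29)) and $\|w_n\|_{L^4}^4=X_n\ge\beta$, Lions' vanishing lemma forbids $\sup_{y}\int_{B_R(y)}w_n^2\to0$; this produces $R_1,\beta_1>0$ and centers $x_n$ with $\int_{B_{R_1}(x_n)}\rho_n\ge\beta_1^2$, which is (3.32).

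For the density $\tilde n_n$ I first promote the above to a lower bound on the coupling. From (3.31), $-\int\tilde n_n\rho_n\ge\|\nabla\tilde E_{1n}\|_{L^2}^2+\|\nabla\tilde E_{2n}\|_{L^2}^2+\tfrac12\|\tilde n_n\|_{L^2}^2-\tfrac\eta2\int|\cdots|^2-o(1)$, and by Weinstein's inequality in the form (3.15), $\tfrac\eta2\int|\cdots|^2\le2\eta\int|\tilde E_{1n}|^2|\tilde E_{2n}|^2\le\tfrac\eta2 X_n\le\tfrac{\eta M}{\|Q\|_{L^2}^2}(\|\nabla\tilde E_{1n}\|_{L^2}^2+\|\nabla\tilde E_{2n}\|_{L^2}^2)$, where $M:=\|\tilde E_{1n}\|_{L^2}^2+\|\tilde E_{2n}\|_{L^2}^2\le\|E_{10}\|_{L^2}^2+\|E_{20}\|_{L^2}^2$. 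Here the upper mass bound $M<\|Q\|_{L^2}^2/\eta$ from (3.4) is \emph{decisive}: it makes the coefficient $1-\eta M/\|Q\|_{L^2}^2$ strictly positive, so with $\|\nabla\tilde E_{1n}\|_{L^2}^2+\|\nabla\tilde E_{2n}\|_{L^2}^2\ge c_1$ I obtain $-\int\tilde n_n\rho_n\ge\delta>0$. Splitting this integral across $B_R(x_n)$ and its complement, Cauchy--Schwarz gives $\delta\le\|\tilde n_n\|_{L^2(B_R(x_n))}X_n^{1/2}+\|\tilde n_n\|_{L^2}\big(\int_{B_R^c(x_n)}\rho_n^2\big)^{1/2}$, so once the tail $\int_{B_R^c(x_n)}\rho_n^2$ is controlled the first term stays bounded below and (3.33) follows.

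The hard part is precisely this tail control, i.e. the $L^4$-tightness of the translated sequence $\tilde E_{jn}(\cdot+x_n)$; the $L^2$-concentration of the previous step does not by itself prevent quartic mass from leaking to spatial infinity. I would handle it by translation and limit passage: along a subsequence $\tilde E_{jn}(\cdot+x_n)\rightharpoonup E_j^\infty$ in $H^1$, hence strongly in $L^2_{\mathrm{loc}}\cap L^4_{\mathrm{loc}}$ by Rellich with $E_j^\infty\not\equiv0$ by (3.32), while $\tilde n_n(\cdot+x_n)\rightharpoonup n^\infty$ in $L^2$. Lemma 2.1 then lets me pass the localized coupling to the limit on each ball; ruling out escape of quartic mass to infinity through Lions' concentration--compactness alternative (vanishing excluded by the quartic lower bound, dichotomy excluded through the strict bound $M<\|Q\|_{L^2}^2/\eta$ and the Weinstein inequality) forces $-\int n^\infty(|E_1^\infty|^2+|E_2^\infty|^2)\,dx>0$, whence $n^\infty\not\equiv0$; choosing $R_1$ large and invoking weak lower semicontinuity of $\|\cdot\|_{L^2(B_{R_1})}$ then delivers (3.33).
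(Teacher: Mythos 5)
Your reduction of Part (1) to Part (2), and your proof of (3.32), are sound and take a genuinely different (and cleaner) route than the paper: the quartic lower bound $\liminf_n\int(|\tilde E_{1n}|^2+|\tilde E_{2n}|^2)^2\,dx>0$ extracted from (3.28)--(3.31), fed into Lions' vanishing lemma for $w_n=(|\tilde E_{1n}|^2+|\tilde E_{2n}|^2)^{1/2}$, replaces the paper's pigeonhole choice of squares in Lemma 3.7 and part of the machinery of Proposition 3.6. Your derivation of the global coupling bound $-\int\tilde n_n\big(|\tilde E_{1n}|^2+|\tilde E_{2n}|^2\big)\,dx\geqslant\delta>0$ from the upper constraint in (3.4) also matches the paper's (3.51) and (3.175) and is correct.

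The gap is exactly where you flag it, and the repair you sketch does not close it. To obtain (3.33) at the \emph{same} centers $x_n$ you must show that a definite fraction of the coupling integral $-\int\tilde n_n\rho_n\,dx$ lives on a fixed ball about $x_n$, i.e.\ that the tail $\int_{|x-x_n|>R}\rho_n^2\,dx$ cannot stay bounded below for every $R$. You assert that dichotomy is ``excluded through the strict bound $M<\|Q\|_{L^2}^2/\eta$ and the Weinstein inequality,'' but that upper mass bound only produces the \emph{global} positivity of the coupling; it does nothing to prevent $\rho_n^2$ and $\tilde n_n\rho_n$ from splitting into two bumps drifting apart, with the weak limit at $x_n$ capturing none of the coupling (so that $n^\infty\equiv 0$ there). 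What actually rules this out in the paper is a mechanism your proposal never invokes: in Lemma 3.7 the squares $C_k$ are selected by the pigeonhole argument (3.47)--(3.48) so that the coupling term dominates $q$ times the local $H^1\times L^2$ norms \emph{on $C_k$ itself}, which is what localizes (3.49)--(3.51); and in Proposition 3.6 the dichotomy decomposition (3.91), (3.108) is iterated at most $p$ times (with $p$ as in (3.110)), each extracted bump removing at least $\delta_1$ of mass, until the remainder falls below the threshold $\|Q\|_{L^2(\mathbb{R}^2)}^2/(1+\eta)$, where $\mathcal E\geqslant 0$ by Lemma 2.2 contradicts (3.112). This coercivity of $\mathcal E$ below the $1/(1+\eta)$ mass threshold --- the quantity appearing in the \emph{lower} constraint of (3.4) --- is the engine of the compactness step, and it appears nowhere in your argument. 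Without either the pigeonhole localization or the terminating iteration, the final paragraph of your proof is an unproved claim rather than a proof of (3.33).
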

  The proof of this proposition is similar to Proposition 3.6 in \cite{25Frank}. However, it is much more complicated here since the higher-order magnetic field terms are essentially involved. Proposition 3.5 will be proven step by step later.\\
  \\
\indent We first claim:\\
\begin{proposition}\label{3.6}
Assume there is $m_k= m_k\left(\|E_{10}\|_{L^2(\mathbb{R}^2)},\|E_{20}\|_{L^2(\mathbb{R}^2)}\right) >0$ such that the sequences $\left(E_{1k},E_{2k},n_k,\textbf{v}_k\right)\in H^1(\mathbb{R}^2)\times H^1(\mathbb{R}^2)\times L^2(\mathbb{R}^2) \times
L^2(\mathbb{R}^2)$ satisfy
$$\|E_{1k}\|_{L^2(\mathbb{R}^2)}^2+\|E_{2k}\|_{L^2(\mathbb{R}^2)}^2
=\|E_{10}\|_{L^2(\mathbb{R}^2)}^2+\|E_{20}\|_{L^2(\mathbb{R}^2)}^2>0,\eqno(3.34)$$
\\
and there exist constants $R_0>0$ and $\delta'_0>0$ such that
\\
$$
\sup_{y\in\mathbb{R}^2}\int_{|x-y|<R_0}\left(|E_{1k}(x)|^2+|E_{2k}(x)|^2
\right)dx\leqslant\frac{\|Q\|_{L^2(\mathbb{R}^2)}^2}{1+\eta}-\delta'_0,\eqno(3.35)
$$
or
$$
\sup_{y\in\mathbb{R}^2}\int_{|x-y|<R_0}|n_k(x)|dx\leqslant m_k-\delta'_0.\eqno(3.36)
$$
\\
Then there exist constants $C_1>0,~~C_2>0$ such that
$$
\left.
\begin{array}{ll}
&\displaystyle
-C_1+ C_2\int_{\mathbb{R}^2}\left(|\nabla E_{1k}|^2+|\nabla E_{2k}|^2+\frac{1}{2}|n_k|^2+\frac{1}{2}|\textbf{v}_k|^2\right)dx
\\[0.4cm]
&\displaystyle\qquad\qquad\leqslant\mathcal{H}(E_{1k},E_{2k},n_k,\textbf{v}_k),\end{array}
\right.\eqno(3.37)
$$
where $\mathcal{H}$ is defined by (1.3).
\end{proposition}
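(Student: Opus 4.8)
The plan is to reduce the coercivity (3.37) to a pair of localized nonlinear estimates and then absorb the only two terms of $\mathcal{H}$ that can be negative---the coupling term $\int_{\mathbb{R}^2} n_k\big(|E_{1k}|^2+|E_{2k}|^2\big)\,dx$ and the magnetic quartic $-\frac{\eta}{2}\int_{\mathbb{R}^2}\big|E_{1k}\ov{E}_{2k}-E_{2k}\ov{E}_{1k}\big|^2\,dx$---into the positive quadratic part. Write $P_k:=|E_{1k}|^2+|E_{2k}|^2$, $G_k:=\|\nabla E_{1k}\|_{L^2}^2+\|\nabla E_{2k}\|_{L^2}^2$ and $M:=\|E_{10}\|_{L^2}^2+\|E_{20}\|_{L^2}^2$. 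Since $\big|E_{1k}\ov{E}_{2k}-E_{2k}\ov{E}_{1k}\big|^2\le 4|E_{1k}|^2|E_{2k}|^2\le P_k^2$ exactly as in (3.11), it suffices to bound $\int_{\mathbb{R}^2}P_k^2\,dx$ and $\big|\int_{\mathbb{R}^2}n_kP_k\,dx\big|$ by a strictly sub-unit multiple of $G_k+\tfrac12\|n_k\|_{L^2}^2$ plus a fixed constant; the dichotomy (3.35)/(3.36) is precisely what controls these two quantities.

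The common tool is a covering of $\mathbb{R}^2$ by balls $B_j=B(y_j,R_0)$ of bounded overlap together with a subordinate partition of unity; applying the sharp Gagliardo--Nirenberg inequality (Lemma 2.2) on each ball as in the derivation of (3.15) replaces the global mass by the local mass $\int_{B_j}P_k\,dx$, the cutoff gradients contributing an error bounded by $CR_0^{-2}\!\int_{\mathbb{R}^2}P_k\,dx=CR_0^{-2}M$. Under hypothesis (3.35) this yields the improved bound
\[
\int_{\mathbb{R}^2}P_k^2\,dx\le \theta\,G_k+C_0,\qquad \theta:=\frac{2}{\|Q\|_{L^2}^2}\Big(\frac{\|Q\|_{L^2}^2}{1+\eta}-\delta_0'\Big)<\frac{2}{1+\eta}.
\]

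In the first case the gain $\theta<\frac{2}{1+\eta}$ closes the argument: bounding the magnetic term by $\frac{\eta}{2}\int P_k^2$, the coupling by $\int n_kP_k\ge-\|n_k\|_{L^2}\big(\int P_k^2\big)^{1/2}$, and completing the square after peeling off a small fraction $\frac{\varepsilon}{2}\|n_k\|_{L^2}^2$, one obtains
\[
\mathcal{H}\ge\Big[1-\Big(\tfrac{1}{2(1-\varepsilon)}+\tfrac{\eta}{2}\Big)\theta\Big]G_k+\tfrac{\varepsilon}{2}\|n_k\|_{L^2}^2+\tfrac12\|\mathbf{v}_k\|_{L^2}^2-C_1 .
\]
As $\varepsilon\to0$ the bracket tends to $1-\frac{1+\eta}{2}\theta>0$, so for $\varepsilon$ small every coefficient is positive and (3.37) holds with $C_2$ their minimum.

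The second case, hypothesis (3.36), is the crux. Now (3.35) may fail and only the global inequality $\int P_k^2\le\frac{2M}{\|Q\|^2}G_k$ is at hand; here the \emph{upper} constraint $M<\|Q\|_{L^2}^2/\eta$ from (3.1) supplies the subcritical margin $G_k-\frac{\eta}{2}\int P_k^2\ge\varepsilon_0 G_k$ with $\varepsilon_0:=1-\eta M/\|Q\|^2>0$, which disposes of the magnetic term. It remains to control $\int n_kP_k$ using only the local $L^1$ smallness of $n_k$: on each $B_j$ one interpolates $\|n_k\|_{L^{3/2}(B_j)}\le\|n_k\|_{L^1(B_j)}^{1/3}\|n_k\|_{L^2(B_j)}^{2/3}$, pairs this with a Gagliardo--Nirenberg bound for $\chi_j P_k$ in $L^{3}$, and sums over $j$ by bounded overlap and Young's inequality (the homogeneity $\tfrac23+\tfrac43=2$ being exactly what makes the result linear in $G_k+\|n_k\|_{L^2}^2$) to reach $\big|\int n_kP_k\big|\le\nu\,(G_k+\|n_k\|_{L^2}^2)+C_1$; choosing the threshold $m_k$ (hence the bound on $\|n_k\|_{L^1(B_j)}$) together with $R_0,\delta_0'$ so that $\nu<\min(\varepsilon_0,\tfrac12)$, and absorbing into $\varepsilon_0 G_k+\tfrac12\|n_k\|_{L^2}^2$, yields (3.37). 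I expect the main obstacle to be precisely this last coupling estimate: extracting an honestly small constant $\nu$ from the $L^1$ non-concentration of $n_k$ while the two field components and the extra magnetic quartic are simultaneously present is exactly what makes the argument heavier than its counterpart in \cite{25Frank}, and one must check carefully that the powers of $G_k$ and $\|n_k\|_{L^2}^2$ produced by interpolation recombine, after Young's inequality, into a genuinely linear bound with the remaining cutoff errors swept into $C_1$.
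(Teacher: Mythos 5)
Your strategy is genuinely different from the paper's: you try to prove the coercivity (3.37) directly, by localizing the sharp Gagliardo--Nirenberg inequality over a bounded-overlap covering and absorbing the coupling and magnetic terms, whereas the paper argues by contradiction --- it rescales by $\lambda_k$, uses Lemma 3.7 to show that failure of (3.37) forces a non-vanishing local $L^1$ mass of $n_k$ (ruling out (3.36)), and then runs a Lions-type concentration-compactness/profile iteration to contradict (3.35). The algebra in your Case 1 (completing the square in $n_k$, bounding the magnetic quartic by $\frac{\eta}{2}\int P_k^2$) is fine \emph{given} your claimed bound $\int P_k^2\le\theta G_k+C_0$ with $\theta<\frac{2}{1+\eta}$, but that bound is exactly where the argument breaks.

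The sharp constant $2/\|Q\|_{L^2}^2$ of Lemma 2.2 does not survive localization. To write $\int P_k^2$ as a sum of localized pieces you need $\sum_j\chi_j^4=1$ (or a tiling), and then summing the terms $\|\nabla(\chi_jE_{ik})\|_{L^2}^2$ produces the factor $\sup_x\sum_j\chi_j^2(x)$, which is at least $K^{1/2}>1$ for any genuine covering with overlap number $K$; the commutator errors are indeed only $O(R_0^{-2}M)$ and harmless, but the multiplicative overlap loss gives $\theta=\frac{2K^{1/2}}{\|Q\|_{L^2}^2}\bigl(\frac{\|Q\|_{L^2}^2}{1+\eta}-\delta_0'\bigr)$, which is below $\frac{2}{1+\eta}$ only when $\delta_0'$ exceeds the fixed fraction $\frac{\|Q\|_{L^2}^2}{1+\eta}\bigl(1-K^{-1/2}\bigr)$ of the threshold. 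Since the proposition must hold for every $\delta_0'>0$, no partition-of-unity argument recovers the sharp constant here; this is precisely why the paper passes to weak limits and profiles, for which the global sharp inequality applies with no cutoff loss. Your Case 2 has a second gap of the same flavor: $m_k$ is the specific non-vanishing threshold produced by Lemma 3.7/Proposition 3.8 (it is with this $m$ that Proposition 3.6 is later invoked in the proof of Proposition 3.8), so you are not free to ``choose the threshold $m_k$'' small; with $\sup_y\|n_k\|_{L^1(B(y,R_0))}\le m_k-\delta_0'$ merely \emph{bounded}, your interpolation constant $\nu\sim(m_k-\delta_0')^{1/3}M^{1/3}$ is a fixed $O(1)$ quantity and cannot be absorbed into $\varepsilon_0G_k+\frac12\|n_k\|_{L^2}^2$. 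In the paper, (3.36) is used only qualitatively, to contradict the non-vanishing conclusion of Lemma 3.7 after rescaling, not to produce a small absorption constant.
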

\qquad\\
{\bf Proof of Proposition 3.6.}\\

\indent We first define two functionals as follows:
$$
\left.
\begin{array}{ll}
 \displaystyle\mathcal{E}(E_1,E_2) &\displaystyle \triangleq \| \nabla E_1 \|_{L^{2}(\mathbb{R}^2)}^2 + \| \nabla E_2 \|_{L^{2}(\mathbb{R}^2)}^2 - \frac{1}{2} \int_{\mathbb{R}^2} \left(|E_1|^2 + |E_2|^2\right)^2 dx
\\[0.3cm]
&\displaystyle\quad - \frac{\eta}{2} \int_{\mathbb{R}^2}\left |\overline{E_1}E_2 - E_1 \overline{E_2}\right|^2dx,\\[0.3cm]
\end{array}
\right.\eqno(3.38)
$$
\\
$$\mathcal{H}_1(E_1,E_2,n) \triangleq  \mathcal{E}(E_1,E_2)+ \frac{1}{2} \int_{\mathbb{R}^2} \left(n + |E_1|^2 + |E_2|^2\right)^2dx.\eqno(3.39)$$
 \\
Let
$$
\left\{
\begin{array}{ll}
&\displaystyle
\tilde E_{1k}(x)=\frac{1}{\lambda_k}E_{1k}\left(\frac{x}{\lambda_k}\right),
\\[0.4cm]
&\tilde E_{2k}(x)=\frac{1}{\lambda_k}E_{2k}\left(\frac{x}{\lambda_k}\right),
\\[0.3cm]
&\tilde n_{k}(x)=\frac{1}{\lambda_k^2}n_{k}\left(\frac{x}{\lambda_k}\right),
\end{array}
\right.\eqno(3.40)
$$
where
$$\lambda_k^2=\|\nabla E_{1k}\|^{2}_{L^{2}(\mathbb{R}^{2})}+\|\nabla E_{2k}\|^{2}_{L^{2}(\mathbb{R}^{2})}+\frac{1}{2}\|n_{k}\|^{2}_{L^{2}(\mathbb{R}^{2})}.
\eqno(3.41)$$
We continue the proof of Proposition 3.6 through four steps.\\
\\
{\bf Step 1. \quad A non-vanishing property of $\left(\tilde E_{1k},\tilde E_{2k},\tilde n_{k}\right)$}\\
\\
\begin{lemma}\label{3.7}
For the sequences $(E_{1k},E_{2k},n_{k})$ introduced in Proposition 3.6, assume there is a sequence $(\tilde E_{1k},\tilde E_{2k},\tilde n_{k})\in H^1(\mathbb{R}^2)\times H^1(\mathbb{R}^2)\times L^2(\mathbb{R}^2)$ such that as $k\to+\infty$,  the following estimates hold:
$$\mathcal{H}\left(\tilde E_{1k},\tilde E_{2k},\tilde n_{k},0\right)\leqslant 0,\eqno(3.42)$$
$$\int_{\mathbb{R}^2}\left(|\tilde E_{1k}|^2+|\tilde E_{2k}|^2\right)dx\to c_1>0,\eqno(3.43)$$
$$\int_{\mathbb{R}^2}\left(|\nabla \tilde E_{1k}|^2+|\nabla \tilde E_{2k}|^2\right)dx+\frac{1}{2}\int_{\mathbb{R}^2}|\tilde n_{k}|^2dx\to
c_2>0,\eqno(3.44)$$
$$
\left.
\begin{array}{ll}
&\displaystyle\int_{\mathbb{R}^2}\tilde n_{k}\left(|\tilde E_{1k}|^2+|\tilde E_{2k}|^2\right)dx
-\eta\int_{\mathbb{R}^2}|\tilde E_{1k}|^2|\tilde E_{2k}|^2dx
\\[0.3cm]
 &\displaystyle\quad+\frac{\eta}{2}\int_{\mathbb{R}^2}
\left(\left(\tilde E_{1k}\right)^2\left(\overline{\tilde E_{2k}}\right)^2
+\left(\overline{\tilde E_{1k}}\right)^2\left(\tilde E_{2k}\right)^2\right)dx\to -c_3<0.
\end{array}
\right.\eqno(3.45)
$$
Then there exist a constant $c_4=c_4(c_1,c_2,c_3)>0$ and a sequence $x_k\in\mathbb{R}^2$ such that
$$\int_{|x-x_k|<1}|\tilde n_{k}|dx>c_4.\eqno(3.46)$$
\end{lemma}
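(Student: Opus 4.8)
The plan is to argue by contradiction. If the conclusion (3.46) failed, then for every sequence $x_k\in\R^2$ we would have $\int_{|x-x_k|<1}|\tilde n_k|\,dx\to 0$; equivalently, $\tilde n_k$ vanishes in the sense of Lions, $\sup_{y\in\R^2}\int_{|x-y|<1}|\tilde n_k|\,dx\to 0$. I will show this is incompatible with the hypotheses (3.42)--(3.45).

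First I would put the magnetic coupling in a usable form. Writing $\rho_k:=|\tilde E_{1k}|^2+|\tilde E_{2k}|^2$ and using the pointwise identity $-\eta|a|^2|b|^2+\frac{\eta}{2}\big(a^2\bar b^2+\bar a^2b^2\big)=-\frac{\eta}{2}|a\bar b-\bar a b|^2$ for $a,b\in\C$, hypothesis (3.45) reads
\[
\int_{\R^2}\tilde n_k\,\rho_k\,dx-\frac{\eta}{2}\int_{\R^2}\big|\tilde E_{1k}\overline{\tilde E_{2k}}-\overline{\tilde E_{1k}}\tilde E_{2k}\big|^2\,dx\longrightarrow -c_3<0 .
\]
Since the rescaling (3.41) preserves the $L^2$ mass, $\int_{\R^2}\rho_k\,dx\to c_1=\|E_{10}\|_{L^2}^2+\|E_{20}\|_{L^2}^2<\|Q\|_{L^2}^2/\eta$ by (3.4), so Weinstein's inequality (Lemma 2.2), exactly as in (3.15)--(3.18), yields the magnetic bound $\frac{\eta}{2}\int|\tilde E_{1k}\overline{\tilde E_{2k}}-\overline{\tilde E_{1k}}\tilde E_{2k}|^2\,dx\le \theta\,G_k+o(1)$, where $G_k:=\|\nabla\tilde E_{1k}\|_{L^2}^2+\|\nabla\tilde E_{2k}\|_{L^2}^2$ and $\theta:=\eta c_1/\|Q\|_{L^2}^2<1$. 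Combining the displayed limit with $G_k\le c_2+o(1)$ (from (3.44)) and with (3.42) in the expanded form $\mathcal H(\tilde E_{1k},\tilde E_{2k},\tilde n_k,0)=G_k+\frac12\|\tilde n_k\|_{L^2}^2+\int\tilde n_k\rho_k-\frac{\eta}{2}\int|\ldots|^2\le 0$ (cf. (3.38)--(3.40)) gives in the limit $c_2\le c_3$, and hence
\[
\limsup_{k\to\infty}\int_{\R^2}\tilde n_k\,\rho_k\,dx\le \theta c_2-c_3\le (\theta-1)c_3<0 .
\]
Thus the hypotheses force the interaction term to stay bounded away from $0$.

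The heart of the proof is therefore to show that, under vanishing of $\tilde n_k$, one has $\int_{\R^2}\tilde n_k\rho_k\,dx\to 0$, in direct contradiction with the last display. To prove this I would extract weak limits $\tilde E_{jk}\rightharpoonup E_j$ in $H^1(\R^2)$ (the bounds are (3.43)--(3.44)); the uniform $L^1$-vanishing gives $\tilde n_k\to 0$ in $L^1_{\mathrm{loc}}$, hence $\tilde n_k\rightharpoonup 0$ in $L^2$. By Rellich's theorem $\rho_k\to\rho:=|E_1|^2+|E_2|^2$ strongly in $L^2(B_R)$ for each fixed $R$, so splitting $\int\tilde n_k\rho_k=\int_{B_R}\tilde n_k\rho+\int_{B_R}\tilde n_k(\rho_k-\rho)+\int_{B_R^c}\tilde n_k\rho_k$ kills the first term by weak convergence and the second by the strong $L^2(B_R)$ convergence of $\rho_k$ against the $L^2$-bounded $\tilde n_k$. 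The tail $\int_{B_R^c}\tilde n_k\rho_k$ I would control by localizing on unit balls and interpolating $\tilde n_k$ between $L^1$ and $L^2$, e.g. $\|\tilde n_k\|_{L^{3/2}(B)}\le\|\tilde n_k\|_{L^1(B)}^{1/3}\|\tilde n_k\|_{L^2(B)}^{2/3}$, while exploiting the extra regularity of $\rho_k$: from $\nabla\rho_k=2\operatorname{Re}(\overline{\tilde E}_{jk}\nabla\tilde E_{jk})$ the density $\rho_k$ is bounded in $W^{1,1}\cap L^p$ for every $p<\infty$.

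\textbf{The main obstacle} is exactly this interaction estimate. Unlike the Schr\"odinger fields $\tilde E_{jk}$, the acoustic field $\tilde n_k$ is only bounded in $L^2$ and enjoys no derivative gain, so Lions' vanishing lemma cannot be invoked for $\tilde n_k$ to upgrade its vanishing to decay in a stronger norm; all the compactness must be transferred from $\tilde E_{jk}$ to $\rho_k$, and one must rule out that the mass of $\rho_k$ escapes to spatial infinity (tightness), since such escaping mass would drag the pairing $\int\tilde n_k\rho_k$ along with a correspondingly translated piece of $\tilde n_k$ whose $L^1$-on-balls is already known to vanish. This is the step that is substantially more delicate than in the classical Zakharov system, precisely because the extra magnetic term $\frac{\eta}{2}\int|\tilde E_{1k}\overline{\tilde E_{2k}}-\overline{\tilde E_{1k}}\tilde E_{2k}|^2$ must be absorbed into the kinetic energy $G_k$, which is where $\theta<1$, i.e. the mass restriction (3.4), is indispensable. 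Once $\int\tilde n_k\rho_k\to 0$ is secured it contradicts the strictly negative lower bound obtained above, so the vanishing assumption is untenable and (3.46) follows.
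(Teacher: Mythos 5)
Your proposal is correct in outline and reaches (3.46) by a genuinely different route from the paper. You negate the conclusion into Lions-type vanishing of $\tilde n_{k}$ on unit balls, use the identity $-\eta|a|^2|b|^2+\tfrac{\eta}{2}(a^2\bar b^2+\bar a^2 b^2)=-\tfrac{\eta}{2}|a\bar b-\bar a b|^2$ to rewrite (3.45), deduce from (3.42)--(3.45), Weinstein's inequality and the mass bound that $\limsup_k\int\tilde n_k\rho_k\leqslant(\theta-1)c_3<0$ with $\theta=\eta c_1/\|Q\|_{L^2}^2<1$, and then kill this pairing under vanishing by covering $\mathbb{R}^2$ with unit balls and interpolating $\|\tilde n_k\|_{L^{3/2}(B)}\leqslant\|\tilde n_k\|_{L^1(B)}^{1/3}\|\tilde n_k\|_{L^2(B)}^{2/3}$ against the local integrability of $\rho_k$. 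The paper instead localizes first: a pigeonhole argument over a tiling by unit squares produces one square $C_k$ on which the interaction is at least a fixed fraction $q$ of the local energy (estimate (3.47)), ``Conclusion I'' converts this into the local lower bound $\int_{C_k}-\tilde n_k\rho_k\geqslant c_3^{*}>0$ (estimate (3.51)), and local Rellich compactness together with Lemma 2.1 shows this is impossible when $\int_{C_k}|\tilde n_k|\to 0$. Your global version bypasses the paper's rather delicate local lower-bound computations, at the cost of the tail/covering estimate, which you only sketch; that estimate is standard and does close (one sums $\|\tilde n_k\|_{L^{3/2}(B_i)}\|\rho_k\|_{L^{3}(B_i)}$ over a finite-overlap cover, using H\"older in $i$, the uniform bound $\sum_i\|\tilde E_{jk}\|_{H^1(B_i)}^2\lesssim 1$ and Gagliardo--Nirenberg on each ball, so that the factor $\sup_i\|\tilde n_k\|_{L^1(B_i)}^{1/3}\to 0$ forces $\int\tilde n_k\rho_k\to 0$). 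Both arguments invoke the mass restriction at the same point, namely to absorb $\tfrac{\eta}{2}\int|\tilde E_{1k}\overline{\tilde E_{2k}}-\overline{\tilde E_{1k}}\tilde E_{2k}|^2$ into a fraction $\theta<1$ of the kinetic energy; you should state explicitly that this bound on $c_1$ comes from (3.34) and the standing hypothesis (3.4), since the lemma's hypotheses alone only give $c_1>0$.
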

\begin{proof}
By (3.40), we claim that there exists a sequence $x_k\in\mathbb{R}^2$ such that
$$
\left.
\begin{array}{ll}
&\displaystyle
 \int_{C_k}-\tilde n_{k}\left(|\tilde E_{1k}|^2+|\tilde E_{2k}|^2\right)dx+\eta\int_{C_k}\left|\tilde E_{1k}\right|^2\left|\tilde E_{2k}\right|^2dx
 \\[0.4cm]
&\displaystyle \qquad-\frac{\eta}{2}\int_{C_k}\left(\left(\tilde E_{1k}\right)^2\left(\overline{\tilde E_{2k}}
\right)^2+\left(\overline{\tilde E_{1k}}\right)^2\left(\tilde E_{2k}\right)^2\right)dx
\\[0.4cm]
&\displaystyle \geqslant  q\cdot\int_{C_k}\left[\left(|\nabla \tilde E_{1k}|^2+|\nabla \tilde E_{12k}|^2\right)+\left(|\tilde E_{1k}|^2+|\tilde E_{2k}|^2\right)+\frac{1}{2}\left|\tilde n_{k}\right|^2\right]dx,
\end{array}
\right.\eqno(3.47)
$$
for $k$ large enough , where $C_k$ is the square of center $x_k$ and~$\displaystyle q=\frac{c_3}{c_0(c_1+c_2)}$ with $c_0>1$ is a fixed constant. Otherwise,  one would obtain
$$
\left.
\begin{array}{ll}
&\displaystyle\int_{C_k}-\tilde n_{k}\left(|\tilde E_{1k}|^2+|\tilde E_{2k}|^2\right)dx
+\eta\int_{C_k}\left|\tilde E_{1k}\right|^2\left|\tilde E_{2k}\right|^2dx
\\[0.4cm]
&\displaystyle\quad-\frac{\eta}{2}\int_{\mathbb{R}^2}\left(\left(\tilde E_{1k}\right)^2
\left(\overline{\tilde E_{2k}}\right)^2+\left(\overline{\tilde E_{1k}}\right)^2
\left(\tilde E_{2k}\right)^2\right)dx
\\[0.4cm]
&\displaystyle<  q\cdot\int_{C_k}\left[\left(|\nabla \tilde E_{1k}|^2+|\nabla \tilde E_{2k}|^2\right)+\left(|\tilde E_{1k}|^2+|\tilde E_{2k}|^2\right)+\frac{1}{2}|\tilde n_{k}|^2\right]dx.
\end{array}
\right.\eqno(3.48)
$$
Let $k\to+\infty$, (3.48) yields $\displaystyle c_3< q(c_1+c_2)=\frac{c_3}{c_0}~(c_0>1)$, which is a contradiction.
 \\
\indent We now claim the following conclusion.\\
\\
{\bf Conclusion I:} There exist constants
\\
$$c^*_1=\frac{2\sqrt{2}q^2}{1+q\eta}\|Q\|_{L^2(\mathbb{R}^2)}>0,~~ c^*_2=\frac{4q^3}{1+q\eta}\|Q\|_{L^2(\mathbb{R}^2)}>0,$$
$$ c^*_3=\varepsilon c^*_1>0~~\mbox{with}~~\varepsilon=\dfrac{\sqrt{2}q\|Q\|_{L^{2}(C_{k})}}{q\eta+1}$$  such that
$$\left[\int_{C_k}\left(\left|\tilde E_{1k}\right|^2+\left|\tilde E_{2k}\right|^2\right)^2dx\right]^{\frac{1}{2}}\geqslant c^*_1>0,\eqno(3.49)$$
$$
\left.
\begin{array}{ll}
&\displaystyle\int_{C_k}-\tilde n_{k}\left(\left|\tilde E_{1k}\right|^2+\left|\tilde E_{2k}\right|^2\right)dx+
\eta\int_{C_k}\left|\tilde E_{1k}\right|^2\left|\tilde E_{2k}\right|^2dx
\\[0.4cm]
&\displaystyle\qquad-\frac{\eta}{2}\int_{C_k}
\left(\left(\tilde E_{1k}\right)^2\left(\overline{\tilde E_{2k}}\right)^2+\left(\overline{\tilde E_{1k}}\right)^2\left(\tilde E_{2k}\right)^2\right)dx\geqslant c^*_2>0,
\end{array}
\right.\eqno(3.50)
$$
$$\int_{C_k}-\tilde n_{k}\left(\left|\tilde E_{1k}\right|^2+\left|\tilde E_{2k}\right|^2\right)dx\geqslant c^*_3>0.\eqno(3.51)$$
{\bf Proof of Conclusion I.}\\
\\
\indent Lemma 2.2, Cauchy-Schwartz inequality: $\displaystyle 2ab\leqslant \frac{(a +b)^{2}}{2} ~(a,b>0)$ and Young's inequality give

$$
\left.
\begin{array}{ll}
&\displaystyle\int_{C_k}\left(\left|\nabla \tilde E_{1k}\right|^2+\left|\nabla \tilde E_{2k}\right|^2\right)dx+\int_{C_k}\left(\left|\tilde E_{1k}\right|^2+\left|\tilde E_{2k}\right|^2\right)dx
\\[0.3cm]
&\displaystyle\qquad\geqslant\sqrt{2}\|Q\|_{L^2\left(\mathbb{R}^2\right)}
\left[\int_{C_k}\left(\left|\tilde E_{1k}\right|^2+\left|\tilde E_{2k}\right|^2\right)^2dx\right]^{\frac{1}{2}}.
\end{array}
\right.\eqno(3.52)
$$

$$
\left.
\begin{array}{ll}
&\displaystyle q\cdot\sqrt{2}\|Q\|_{L^2(C_k)}\left[\int_{C_k}\left(\left|\tilde E_{1k}\right|^2+\left|\tilde E_{2k}\right|^2\right)^2dx
\right]^{\frac{1}{2}}+\frac{q}{2}\left\|\tilde n_{k}\right\|^2_{L^2\left(C_k\right)}
\\[0.4cm]
&\displaystyle\quad\leqslant\int_{C_k}-\tilde n_{k}\left(\left|\tilde E_{1k}\right|^2+\left|\tilde E_{2k}\right|^2\right)dx
+\eta\int_{C_k}\left|\tilde E_{1k}\right|^2\left|\tilde E_{2k}\right|^2dx
\\[0.4cm]
&\displaystyle\qquad-\frac{\eta}{2}\int_{C_k}\left(\left(\tilde E_{1k}\right)^2
\left(\overline{\tilde E_{2k}}\right)^2+\left(\overline{\tilde E_{1k}}\right)^2
\left(\tilde E_{2k}\right)^2\right)dx
\\[0.4cm]
&\displaystyle\quad\leqslant\frac{q}{2}\int_{C_k}\left|\tilde n_{k}\right|^2dx+\frac{1}{2q}
\int_{C_k}\left(\left|\tilde E_{1k}\right|^2+\left|\tilde E_{2k}\right|^2\right)^2dx
\\[0.4cm]
&\displaystyle\qquad+\frac{\eta}{2}\int_{C_k}\left(\left|\tilde E_{1k}\right|^2+\left|\tilde E_{2k}\right|^2\right)^2dx.
\end{array}
\right.\eqno(3.53)
$$
That is,
$$\left(\frac{1+\eta q}{2q}\right)\left[\int_{C_k}\left(\left|\tilde E_{1k}\right|^2+\left|\tilde E_{2k}\right|^2\right)^2dx\right]
^{\frac{1}{2}}\geqslant\sqrt{2}q\|Q\|_{L^2\left(C_k\right)}.\eqno(3.54)$$
This yields (3.49).\\
\indent Similarly, (3.53) and (3.54) imply
$$
\left.
\begin{array}{ll}
&\displaystyle\int_{C_k}-\tilde n_{k}\left(\left|\tilde E_{1k}\right|^2+\left|\tilde E_{2k}\right|^2\right)dx+
\eta\int_{C_k}\left|\tilde E_{1k}\right|^2\left|\tilde E_{2k}\right|^2dx
\\[0.4cm]
&\displaystyle\qquad-\frac{\eta}{2}\int_{C_k}\left(\left(\tilde E_{1k}\right)^2
\left(\overline{\tilde E_{2k}}\right)^2+\left(\overline{\tilde E_{1k}}\right)^2\left(\tilde E_{2k}\right)^2
\right)dx
\\[0.4cm]
&\displaystyle\quad\geqslant  q\sqrt{2}\|Q\|_{L^2\left(C_k\right)}\left[\int_{C_k}
\left(\left|\tilde E_{1k}\right|^2+\left|\tilde E_{2k}\right|^2\right)^2dx\right]^{\frac{1}{2}}
\\[0.4cm]
&\displaystyle\quad\geqslant \frac{4q^3\|Q\|_{L^2\left(C_k\right)}^2}{1+q\eta}=c^*_2>0.
\end{array}
\right.\eqno(3.55)
$$
So (3.50) is true.\\
\indent In the following we prove (3.51). By (3.47), one obtains
$$
\left.
\begin{array}{ll}
&\displaystyle q \int_{C_k}\left[\left(\left|\nabla \tilde E_{1k}\right|^2+\left|\nabla \tilde E_{2k}\right|^2\right)
 +\left(\left| \tilde E_{1k}\right|^2+\left|\tilde E_{2k}\right|^2\right)\right]dx
  \\[0.4cm]
 &\displaystyle\qquad\qquad\qquad+\frac{q}{2}\int_{C_k}\left|\tilde n_{k}\right|^2dx
 \\[0.4cm]
 &\displaystyle\quad\leqslant \frac{\eta}{2}\int_{C_k}\left|\overline{\tilde E_{1k}}\tilde E_{2k}
 -\tilde E_{1k}\overline{\tilde E_{2k}}\right|^{2}dx+\int_{C_k}-\tilde n_{k}\left(\left|\tilde E_{1k}\right|^2+\left|\tilde E_{2k}\right|
 ^2\right)dx.
\end{array}
\right.\eqno(3.56)
$$
From Lemma 2.2, it follows
$$\int_{C_k}\left(\left|\nabla \tilde E_{1k}\right|^2+\left|\nabla \tilde E_{2k}\right|^2\right)dx\geqslant
\dfrac{\|Q\|^{2}_{L^{2}(C_k)}\int_{C_k}\left(\left| \tilde E_{1k}\right|^2+\left|  \tilde E_{2k}\right|^2\right)^2dx}
{2\int_{C_k}\left(\left|  \tilde E_{1k}\right|^2+\left| \tilde E_{2k}\right|^2\right)dx}.$$

Note that
$$
\left.
\begin{array}{ll}
&\displaystyle\int_{C_k}\left[\left(\left|\nabla \tilde E_{1k}\right|^2+\left|\nabla \tilde E_{2k}\right|^2\right)
 +\left(\left|\tilde E_{1k}\right|^2+\left|\tilde E_{2k}\right|^2\right)\right]dx\qquad\qquad
 \\[0.4cm]
 &\displaystyle\geqslant 2\left(\int_{C_k}\left( \left|\nabla \tilde E_{1k}\right|^2+\left|\nabla \tilde E_{2k}\right|^2\right)dx\right)^{\frac{1}{2}}\left(\int_{C_k}\left(\left|\tilde E_{1k}\right|^2+\left|\tilde E_{2k}\right|^2\right)dx\right)^{\frac{1}{2}},
\end{array}
\right.
$$
(3.53),(3.54),(3.55) and (3.56) give
$$
\left.
\begin{array}{ll}
&\displaystyle\dfrac{\sqrt{2}q\|Q\|_{L^{2}(C_k)}}{\frac{\eta}{2}+\frac{1}{2q}}
 \sqrt{2}q\|Q\|_{L^{2}(C_k)}+\frac{q}{2}\int_{C_k}\left|\tilde n_{k}\right|^2dx
   \\[0.4cm]
 &\displaystyle\qquad\qquad-\frac{\eta}{2}
 \int_{C_k}\left(\left| \tilde E_{1k}\right|^2 + \left| \tilde E_{2k}\right|^2\right)^{2}dx
  \\[0.4cm]
 &\displaystyle\qquad \leqslant \int_{C_k}-\tilde n_{k}\left(\left|\tilde E_{1k}\right|^2+\left|\tilde E_{2k}\right|
 ^2\right)dx.
\end{array}
\right.\eqno(3.57)
$$
Assume that there exists an $\varepsilon>0$ such that
\\
$$\dfrac{\sqrt{2}q }{\frac{\eta}{2}+\frac{1}{2q}}
 \|Q\|_{L^{2}(C_k)}\leqslant\left(\int_{C_k}\left( \left| \tilde E_{1k}\right|^2 + \left|  \tilde E_{2k}\right|^2\right)^{2}dx\right)^{\frac{1}{2}}\leqslant\dfrac{\sqrt{2}q \|Q\|_{L^{2}(C_k)}-\varepsilon }{\frac{\eta}{2}}.\eqno(3.58)$$
 \\
This yields
$$\displaystyle\eta\leqslant\dfrac{2\sqrt{2}q \|Q\|_{L^{2}(C_k)}-2\varepsilon}{\left(\int_{C_k}\left(|\tilde E_{1k}|^2 + |  \tilde E_{2k}|^2\right)^{2}dx\right)^{\frac{1}{2}}}\leqslant
\dfrac{2\sqrt{2}q \|Q\|_{L^{2}(C_k)}-2\varepsilon}{\sqrt{2}q \|Q\|_{L^{2}(C_k)}}\left(\frac{\eta}{2}+\frac{1}{2q}\right),\eqno(3.59)$$
that is,
$$
\left.
\begin{array}{ll}
 \varepsilon&\displaystyle\leqslant  \sqrt{2}q \|Q\|_{L^{2}(C_k)}\left(1-\dfrac{\eta}{\eta+\frac{1}{q}}\right)
 \\[0.4cm]
&\displaystyle= \sqrt{2}q \|Q\|_{L^{2}(C_k)}\dfrac{1}{q\eta+1}=\dfrac{ \sqrt{2}q \|Q\|_{L^{2}(C_k)}}{q\eta+1}.
\end{array}
\right.\eqno(3.60)
$$
Taking $\varepsilon=\dfrac{\sqrt{2}q \|Q\|_{L^{2}(C_k)}}{q\eta+1}$ in (3.58), one obtains
$$
\left.
\begin{array}{ll}
&\displaystyle\sqrt{2}q \|Q\|_{L^{2}(C_k)}-\frac{\eta}{2}\left(\int_{C_k}\left(|  \tilde E_{1k}|^2 + | \tilde E_{2k}|^2\right)^{2}dx\right)^{\frac{1}{2}}
 \\[0.4cm]
 &\displaystyle\qquad\geqslant\sqrt{2}q \|Q\|_{L^{2}(C_k)}-\dfrac{\sqrt{2}q \|Q\|_{L^{2}(C_k)}-\dfrac{ \sqrt{2}q \|Q\|_{L^{2}(C_k)}}{q\eta+1} }{\frac{\eta}{2}}\cdot \frac{\eta}{2}
 \\[0.4cm]
 &\displaystyle\qquad=\dfrac{ \sqrt{2}q \|Q\|_{L^{2}(C_k)}}{q\eta+1}.
\end{array}
\right.\eqno(3.61)
$$
Note that $\eta >0$, then there exists $\varepsilon>0$ small enough such that the following estimate holds:
$$\sqrt{2}q\|Q\|_{L^2(C_k )}-\frac{\eta}{2}
\left[\int_{C_k}(|\tilde E_{1k}|^2+|\tilde E_{2k}|^2)^2dx\right]^{\frac{1}{2}}\geqslant\varepsilon>0.
\eqno(3.62)$$
Combining (3.47), (3.52), (3.53) and (3.62) together yields

$$
\left.
\begin{array}{ll}
&\displaystyle\int_{C_k}-\tilde n_{k}\left(\left|\tilde E_{1k}\right|^2+\left|\tilde E_{2k}\right|^2\right)dx
\\[0.4cm]
&\displaystyle\qquad\geqslant\sqrt{2}q\|Q\|_{L^2\left(C_k\right)}
\left(\int_{C_k}\left(\left|\tilde E_{1k}\right|^2+\left|\tilde E_{2k}\right|^2\right)^2dx\right)^{\frac{1}{2}}
\\[0.4cm]
&\displaystyle\qquad\quad+\frac{q}{2}\left\|\tilde n_{k}\right\|_{L^2\left(C_k\right)}^2
-\frac{\eta}{2}\left(\int_{C_k}\left(\left|\tilde E_{1k}\right|^2+\left|\tilde E_{2k}\right|^2\right)^2dx\right)
\\[0.4cm]
&\displaystyle\qquad=\left[\sqrt{2}q\|Q\|_{L^2\left(C_k\right)}
-\frac{\eta}{2}\left(\int_{C_k}\left(\left|\tilde E_{1k}\right|^2+\left|\tilde E_{2k}\right|^2\right)^2dx\right)
^{\frac{1}{2}}\right]
\\[0.4cm]
&\displaystyle\qquad\qquad\times\left(\int_{C_k}\left(\left|\tilde E_{1k}\right|^2+\left|\tilde E_{2k}\right|^2\right)^2dx\right)
^{\frac{1}{2}}+\frac{q}{2}\left\|\tilde n_{k}\right\|_{L^2\left(C_k\right)}^2
\\[0.4cm]
&\displaystyle\qquad\geqslant \varepsilon\left(\int_{C_k}\left(\left |\tilde E_{1k}\right|^2+\left|\tilde E_{2k}\right|^2\right)^2dx\right)
^{\frac{1}{2}}+\frac{q}{2}\left\|\tilde n_{k}\right\|_{L^2\left(C_k\right)}^2
\\[0.4cm]
&\displaystyle\qquad\geqslant \varepsilon\dfrac{2\sqrt{2}q^{2} \|Q\|_{L^{2}(C_k)}}{q\eta+1}= \varepsilon c^*_1=c^{*}_3>0.
\end{array}
\right.\eqno(3.63)
$$
This is just the estimate (3.51). Hence Conclusion I follows from (3.54),(3.55) and (3.63).\hfill$\sharp$ \\
\\
\indent We next finish the proof of Lemma 3.7 according to Conclusion I by contradiction.\\
 \indent Assume by contradiction that there exists a subsequence (still denoted by $\tilde n_{k}$) such that as $k\to+\infty$,
$$\int_{C_k}|\tilde n_{k}|dx\to 0.\eqno(3.64)$$
Let $$\tilde n_{k}(x_k+\cdot)\rightharpoonup N'~~\mbox{in}~~L^2(\mathbb{R}^2),\eqno(3.65)$$
and
$$\left(\tilde E_{1k}(x_k+\cdot),\tilde E_{2k}(x_k+\cdot)\right)\rightharpoonup\left(E'_{1},E'_{2}\right)
~~\mbox{in}~~H^1(\mathbb{R}^2)\times H^1(\mathbb{R}^2).\eqno(3.66)$$
By Sobolev-type estimates, we have
$$\left(\tilde E_{1k}(x_k+\cdot),\tilde E_{2k}(x_k+\cdot)\right)\rightarrow \left(E'_{1},E'_{2}\right)
~~\mbox{in}~~L_{loc}^4(\mathbb{R}^2)\times L_{loc}^4(\mathbb{R}^2),\eqno(3.67)$$
and
$$\left(|\tilde E_{1k}(x_k+\cdot)|^{2},|\tilde E_{2k}(x_k+\cdot)|^{2}\right)\rightarrow \left(|E'_{1}|^{2},|E'_{2}|^{2}\right)
~~\mbox{in}~~L_{loc}^2(\mathbb{R}^2)\times L_{loc}^2(\mathbb{R}^2).\eqno(3.68)$$
\indent Indeed, for a bounded open domain $C_{k}$ in  $\mathbb{R}^2$, $H^1(C_{k})\subset\subset L^p(C_{k}),~~p\in[2,+\infty)$, there holds:
$$
\left.
\begin{array}{ll}
&\displaystyle\left\|~\left|\tilde E_{1k}\right|^2-\left|E'_{1}\right|^2\right\|_{L^2(C_{k})}
\\[0.4cm]
&\displaystyle\qquad\leqslant c\left(\int_{\mathbb{R}^2}\left|\tilde E_{1k}-E'_{1} \right|^4dx
\right)^{\frac{1}{4}}\cdot\left(\int_{\mathbb{R}^2}\left(\left|\tilde E_{1k}\right|^2+\left|E'_{1}\right|^2\right)^2
dx\right)^{\frac{1}{4}}
\\[0.4cm]
&\displaystyle\qquad\leqslant 2c\left(\int_{\mathbb{R}^2}\left|\tilde E_{1k}-E'_{1}\right|^4dx\right)
^{\frac{1}{4}}\cdot\left(\int_{\mathbb{R}^2}\left(|\tilde E_{1k}|^4+|E'_{1}|^4\right)dx\right)
^{\frac{1}{4}}.
\end{array}
\right.\eqno(3.69)
$$
From (3.64) it follows that as $k\rightarrow\infty$,
$$\tilde n_{k}(x_k+\cdot)\rightharpoonup 0~~\mbox{in}~~L^2(C_k).\eqno(3.70)$$
On the other hand, by Lemma 2.1 there holds as $k\to+\infty$,
$$
\left.
\begin{array}{ll}
&\displaystyle\int_{C_k}\tilde n_{k}\left(|\tilde E_{1k}|^2+|\tilde E_{2k}|^2\right)dx
\\[0.3cm]
 &\displaystyle\qquad=\int_{C_0}\tilde n_{k}\left(x_k+x\right)\left(|\tilde E_{1k}\left(x_k+x\right)|^2
 +|\tilde E_{2k}\left(x_k+x\right)|^2\right)dx
 \\[0.4cm]
 &\qquad\to   0,
 \end{array}
\right.\eqno(3.71)
$$
which is contradictory to (3.63). Hence the Lemma 3.7.

\end{proof}
\qquad\\
{\bf Step 2: An alternative form for (3.37) }\\
\\
\indent Implementing similar arguments to those in the previous section, we follow from (3.63) that there exists a constant $c^*_4>0$ such that
$$
\int_{C_0}\left(\left|\tilde E_{1k}(x_k+x)\right|^2+\left|\tilde E_{2k}(x_k+x)\right|^2\right)dx\geqslant c^*_4>0.
 \eqno(3.72)$$
 According to the definitions of $\mathcal{H}_{1}(E_{1},E_{2},n)$ (see (3.39)) and $\mathcal{H}(E_{1},E_{2},n,\mathbf{v})$(see (1.3)), for the sake of proving (3.37), it is sufficient to show that there exist constants $C_1>0$ and $C_2>0$ such that

$$-C_1+C_2\int_{\mathbb{R}^2}\left(|\nabla E_{1k}|^2+|\nabla E_{2k}|^2+\frac{1}{2}|n_k|^2\right)dx\leqslant\mathcal{H}_1
\left(E_{1k},E_{2k},n_k\right).\eqno(3.73)$$
We will verify (3.73) by contradiction. \\

Assume that (3.73) would not hold for a subsequence $(E_{1k},E_{2k},n_{k})$. That is, for all constants $C_{1}$, $C_{2}$, there holds
$$-C_1+C_2\int_{\mathbb{R}^2}\left(|\nabla E_{1k}|^2+|\nabla E_{2k}|^2+\frac{1}{2}|n_k|^2\right)dx>\mathcal{H}_1
\left(E_{1k},E_{2k},n_k\right).\eqno(3.74)$$
 Then the following conclusions would  be true provided $k\to+\infty$:
$$\lambda_k^2:=\int_{\mathbb{R}^2}\left(|\nabla E_{1k}|^2+|\nabla E_{2k}|^2\right)dx+\frac{1}{2}\int_{\mathbb{R}^2}|n_k|^2dx\to+\infty,\eqno(3.75)$$

$$\limsup_{k\to+\infty}\frac{\mathcal{H}_1(E_{1k},E_{2k},n_k)}{\lambda_k^2}\leqslant 0.\eqno(3.76)$$
\\[0.5cm]
Otherwise, \\
\\
\indent{\bf  (1)\quad If $\lambda_k\leqslant C$}, then
$$
\left.
\begin{array}{ll}
&\displaystyle\left|\mathcal{H}_1(E_{1k},E_{2k},n_k)\right|
\\[0.3cm]
 &\displaystyle\quad=  \left\|\nabla E_{1k}\right\|_{L^{2}(\mathbb{R}^2)}^2 + \left\|\nabla E_{2k} \right\|_{L^{2}(\mathbb{R}^2)}^2 + \int_{\mathbb{R}^2} |n_k|\left(|E_{1k}|^2 + |E_{2k}|^2\right)dx
 \\[0.4cm]
  &\displaystyle\qquad+ \frac{\eta}{2} \int_{\mathbb{R}^2} \left| \overline{{E}}_{1k} E_{2k} - E_{1k}  \overline{{E}}_{2k} \right|^2dx + \frac{1}{2} \|n_k\|_{L^{2}(\mathbb{R}^2)}^2
 \\[0.4cm]
 &\displaystyle\quad\leq\lambda_k^2 + \frac{1}{2}\|n_k\|_{L^{2}(\mathbb{R}^2)}^2 + \frac{1}{2} \int_{\mathbb{R}^2}\left( |E_{1k}|^2 + |E_{2k}|^2 \right)^{2}dx
  \\[0.4cm]
 &\displaystyle\qquad\qquad\qquad+\eta  \int_{\mathbb{R}^2}\left( |E_{1k}|^2 + |E_{2k}|^2\right)^{2}dx
 \\[0.4cm]
  &\displaystyle\quad \leqslant \lambda_k^2 + \frac{1}{2}\|n_k\|_{L^{2}(\mathbb{R}^2)}^2+ (1+2\eta) \frac{\left(\|E_{1k}|_{L^{2}(\mathbb{R}^2)}^2 + \|E_{2k}\|_{L^{2}(\mathbb{R}^2)}^2\right)}{\|Q\|_{L^{2}(\mathbb{R}^2)}^2}
   \\[0.5cm]
  &\displaystyle\qquad \qquad \qquad \qquad
    \cdot\left(\|\nabla E_{1k}\|_{L^{2}(\mathbb{R}^2)}^2 + \| \nabla E_{2k}\|_{L^{2}(\mathbb{R}^2)}^2\right)
 \\[0.4cm]
  &\displaystyle\quad\leqslant \lambda_k^2 + \frac{1}{2}\|n_k\|_{L^{2}(\mathbb{R}^2)}^2 + (1+2\eta)\frac{1}{\eta} \left(\|\nabla E_{1k}\|_{L^{2}(\mathbb{R}^2)}^2 + \|\nabla E_{2k} \|_{L^{2}(\mathbb{R}^2)}^2\right)
  \\[0.4cm]
  &\displaystyle\quad\leqslant \left(3 + \frac{1}{ \eta }\right) \lambda_k^2 \leqslant C.
 \end{array}
\right.\eqno(3.76*)
$$
This implies (3.73), which is contradictory to the assumption (3.74). So (3.75) holds true.\\
\\
\indent {\bf (2)\quad If $\displaystyle\lim\limits_{k\to+\infty}
\frac{\mathcal{H}_1(E_{1k},E_{2k},n_k)}{\lambda_k^2}=C>0$}, then for $k_0>0$ large enough, there holds
$$
\left.
\begin{array}{ll}
\mathcal{H}_1(E_{1k},E_{2k},n_k) &\displaystyle\geqslant \frac{C}{2} \lambda_k^2
\\[0.3cm]
 &\displaystyle= \frac{C}{2} \bigg( \|\nabla E_{1k}\|_{L^{2}(\mathbb{R}^2)}^2 + \| \nabla E_{2k}\|_{L^{2}(\mathbb{R}^2)}^2 + \frac{1}2 \| \nabla n_{k}\|_{L^{2}(\mathbb{R}^2)}^2\bigg),
\end{array}
\right.
\eqno(3.77)
$$
\\[0.2cm]
which is a contradiction since (3.73) will be satisfied with $C_{1}=0$ and $C_{2}=\frac{C}{2}$.
\\
\\
{\bf Step 3: Scaling discussion}\\
\\
\indent The proof continues as follow. Let
$$
\left\{
\begin{array}{ll}
&\tilde E_{1k}(x)=\frac{1}{\lambda_k}E_{1k}\left(\frac{x}{\lambda_k}\right),
\\[0.3cm]
&\tilde E_{2k}(x)=\frac{1}{\lambda_k}E_{2k}\left(\frac{x}{\lambda_k}\right),
\\[0.3cm]
&\tilde n_{k}(x)=\frac{1}{\lambda_k^2}n_{k}\left(\frac{x}{\lambda_k}\right).
\end{array}
\right.\eqno(3.78)
$$
Straightforward calculation gives
$$\int_{\mathbb{R}^2}\left(\left|\tilde E_{1k}(x)\right|^2+\left|\tilde E_{2k}(x)\right|^2\right)dx=\int_{\mathbb{R}^2}\left(|  E_{10}(x)|^2+| E_{20}(x)|^2\right)dx,\eqno(3.79)$$

$$\int_{\mathbb{R}^2}\left(\left|\nabla\tilde E_{1k}(x)\right|^2+\left|\nabla\tilde E_{2k}(x)\right|^2\right )dx+\frac{1}{2}\int_{\mathbb{R}^2}|\tilde n_k(x)|^2dx=1.\eqno(3.80)$$
In view of
$$
\left.
\begin{array}{ll}
&\displaystyle\limsup_{k \to \infty} \left(1 + \int_{\mathbb{R}^2} \tilde n_k\left(\left|\tilde E_{1k}\right|^2 + \left|\tilde E_{2k}\right|^2\right)dx\right.
\\[0.4cm]
&\displaystyle\qquad\qquad\qquad\left. - \frac{\eta}{2} \int_{\mathbb{R}^2} \left |\overline{\tilde E_{1k}} \tilde E_{2k} - \tilde E_{1k} \overline{\tilde E_{2k}}\right|^2  dx\right)
\\[0.4cm]
&\displaystyle\qquad= \limsup_{k \to + \infty} \mathcal{H}_1\left(\tilde E_{1k},\tilde E_{2k},\tilde n_k\right) \leqslant 0,
\end{array}
\right.\eqno(3.81)
$$
by H\"{o}lder inequality one has

$$
\left.
\begin{array}{ll}
&\displaystyle\left|\int_{\mathbb{R}^2}  \tilde n_k\left(|\tilde E_{1k}|^2 + |\tilde E_{2k}|^2\right)dx - \frac{\eta}{2} \int_{\mathbb{R}^2} \left|\overline{\tilde E_{1k}} \tilde E_{2k} - \tilde E_{1k}\overline{\tilde E_{2k}}\right|^2 dx\right|
\\[0.4cm]
 &\displaystyle\quad\leqslant \frac{1}{2} \|\tilde n_k\|_{L^{2}(\mathbb{R}^2)}^2 + \frac{1}{2}
\int_{\mathbb{R}^2}\left(|\tilde E_{1k}|^2 + |\tilde E_{2k}|^2\right)^2dx
\\[0.5cm]
&\displaystyle\qquad\qquad\qquad\qquad+ \frac{\eta}{2} \int_{\mathbb{R}^2} \left(|\tilde E_{1k}|^2 + |\tilde E_{2k}|^2\right)^2dx
\\[0.4cm]
 &\displaystyle\quad\leqslant\frac{1}{2} \|\tilde n_k\|_{L^{2}(\mathbb{R}^2)}^2 + (1 + \eta)
 \frac{ \left(\|\tilde E_{1k}\|_{L^{2}(\mathbb{R}^2)}^2 + \|\tilde E_{2k}\|_{L^{2}(\mathbb{R}^2)}^2\right)}{\|Q\|_{L^{2}(\mathbb{R}^2)}^2}
 \\[0.5cm]
&\displaystyle\qquad\qquad\qquad\qquad\qquad\cdot\left( \|\nabla\tilde E_{1k}\|_{L^{2}(\mathbb{R}^2)}^2 + \|\nabla \tilde E_{2k}\|_{L^{2}(\mathbb{R}^2)}^2\right)
\\[0.4cm]
 &\displaystyle\quad\leqslant C.
\end{array}
\right.\eqno(3.82)
$$
Hence we can assume by (3.81) and (3.82) that as $k \to +\infty$,

$$
\left.
\begin{array}{ll}
&\displaystyle\int_{\mathbb{R}^2} \tilde n_k\left(|\tilde E_{1k}|^2 + |\tilde E_{2k}|^2\right)dx - \frac{\eta}{2} \int_{\mathbb{R}^2} \left|\overline{\tilde E_{1k}}\tilde E_{2k} - \tilde E_{1k} \overline{\tilde E_{2k}}\right|^2dx
\\[0.4cm]
 &\displaystyle\qquad\qquad\qquad\to c \leqslant -1.
\end{array}
\right.\eqno(3.83)
$$
On the other hand, recalling (3.35) and (3.75), we have $\forall R>0$,
$$\liminf_{k\to+\infty}\left(\sup_{y}\int_{|x-y|<R}\left(|\tilde E_{1k}(x)|^2+|\tilde E_{2k}(x)|^2\right)dx\right)\leqslant\frac{\|Q\|
_{L^2(\mathbb{R}^2)}^2}{1+\eta}-\delta'_0,
\eqno(3.84)$$
or as $R\rightarrow 0$,
$$\liminf_{k\to+\infty}\left(\sup_{y}\int_{|x-y|<R} |\tilde n_{k}| dx\right)\rightarrow0.\eqno(3.85)$$
Note that Lemma 3.7, (3.85) does not hold. Therefore we need to concern the case (3.84) only. \\
\\
{\bf Step 4: Proof of Proposition 3.6}\\
\\
\indent Recalling the definitions of $\mathcal{E}$ (see 3.38) and $\mathcal{H}_{1}$ (see 3.39), there hold
$$\mathcal{H}_1\left(\tilde E_{1k},\tilde E_{2k},\tilde n_{k}\right) = \mathcal{E}\left(\tilde E_{1k},\tilde E_{2k}\right) + \frac{1}{2} \int_{\mathbb{R}^2} \left(\tilde n_{k}+|\tilde E_{1k}|^2+ |\tilde E_{2k}|^2 \right)^2dx,\eqno(3.86)$$
and
$$\limsup_{k \to + \infty} \mathcal{E}\left(\tilde E_{1k},\tilde E_{2k}\right) \leqslant \limsup_{k \to + \infty} \mathcal{H}_1\left(\tilde E_{1k},\tilde E_{2k},\tilde n_{k}\right) \leqslant 0.\eqno(3.87)$$
Then by (3.79), (3.80) and Sobolev type estimates, we conclude that there exist $c_1>0$ and $c_2>0$ such that
$$c_1\leqslant\int_{\mathbb{R}^2}\left(\left|\tilde E_{1k}(x)\right|^2+\left|\tilde E_{2k}(x)\right|^2\right)^2dx\leqslant c_2, \eqno(3.88)$$
\\
$$c_1\leqslant\int_{\mathbb{R}^2}\left(\left|\nabla\tilde E_{1k}(x)\right|^2+\left|\nabla\tilde E_{2k}(x)\right|^2+\left|\tilde E_{1k}(x)\right|^2+\left|\tilde E_{2k}(x)\right|^2\right)dx\leqslant c_2. \eqno(3.89)$$
Hence there exist a constant $\delta_1>0$ and a sequence $x_k^1\in\mathbb{R}^2$ such that
$$\int_{|x-x_k^1|<1}\left(\left|\tilde E_{1k}(x)\right|^2+\left|\tilde E_{2k}(x)\right|^2\right)dx\geqslant\delta_1. \eqno(3.90)$$
In view of Lemma 3.7 and its proof, we introduce the following dichotomy
$$
\left\{
\begin{array}{ll}
&\displaystyle\tilde E_{1k}(x)=\tilde E_{1k}^1(x)+\tilde E_{1k}^{1,R}(x),
\\[0.3cm]
 &\displaystyle\tilde E_{2k}(x)=\tilde E_{2k}^1(x)+\tilde E_{2k}^{1,R}(x).
\end{array}
\right.\eqno(3.91)
$$
Hence, for a sequence $x_k^1$,
$$\left(\tilde E_{1k}^1(x+x_k^1),\tilde E_{2k}^1(x+x_k^1)\right)\rightharpoonup(\psi_1,\psi_2) ~~\mbox{in} ~~H^1(\mathbb{R}^2)\times H^1(\mathbb{R}^2),\eqno(3.92)$$
and
$$\left(\int_{|x-x_k^1|<1}\left(|\tilde E_{1k}^1(x+x_k^1)|^2+|\tilde E_{2k}^1(x+x_k^1)|^2\right)^{2}dx\right)^{\frac{1}{4}}\geqslant c>0.\eqno(3.93)$$
By Sobolev estimates, there exists a $\delta_{1}>0$ depending only on
$\|E_{10}\|_{L^2(\mathbb{R}^2)}$ and $\|E_{20}\|_{L^2(\mathbb{R}^2)}$ such that
$$\left\|\tilde E_{1k}^1(x_k^1+\cdot)\right\|_{L^2(|x-x_k^1|<1)}^2+\left\|\tilde E_{2k}^{1}(x_k^1+\cdot)\right\|_{L^2(|x-x_k^1|<1)}^2\geq\delta_{1}>0.\eqno(3.94)$$
Recalling (3.84), we also obtain for $\forall~R>0$,
$$ \liminf_{k\to+\infty}\left(\|\tilde E_{1k}^1 (x_k^1+\cdot) \|_{L^2(B _{R})}^2+\|\tilde E_{2k}^1(x_k^1+\cdot)\|_{L^2(B_{R})}^2\right)\leqslant
\frac{\|Q\|_{L^2(\mathbb{R}^2)}^2}{1+\eta}-\delta'_0.
\eqno(3.95)$$
Furthermore, using concentration compactness method (Lions \cite{32Lions}), one gets for a suitable choice for $\left(\tilde E_{1k},\tilde E_{2k}\right)$,
$$
\left.
\begin{array}{ll}
&\displaystyle\left\|\tilde E_{1k}^1\right\|_{L^2(\mathbb{R}^2)}^2+\left\|\tilde E_{1k}^{1,R}\right\|_{L^2(\mathbb{R}^2)}^2+\left\|\tilde E_{2k}^1\right\|_{L^2(\mathbb{R}^2)}^2+\left\|\tilde E_{2k}^{1,R}\right\|_{L^2(\mathbb{R}^2)}^2
\\[0.3cm]
 &\displaystyle\qquad\qquad\qquad\to
 \|E_{10}\|_{L^2(\mathbb{R}^2)}^2+\|E_{20}\|_{L^2(\mathbb{R}^2)}^2,
\end{array}
\right.\eqno(3.96)
$$

$$\delta_1\leqslant\lim_{k\to+\infty}\left(\|\tilde E_{1k}^1(x)\|_{L^2(\mathbb{R}^2)}^2+\|\tilde E_{2k}^1(x)\|_{L^2(\mathbb{R}^2)}^2\right)
\leqslant\frac{\|Q\|_{L^2(\mathbb{R}^2)}^2}{1+\eta}-\delta'_0,
\eqno(3.97)
$$
and
$$
\left.
\begin{array}{ll}
\mathcal{E}(\psi_{1},\psi_{2})&\displaystyle\leqslant\limsup_{k\to+\infty}\mathcal{E}\left(\tilde E_{1k}^1,\tilde E_{2k}^1\right)+\limsup_{k\to+\infty}\mathcal{E}\left(\tilde E_{1k}^{1,R},\tilde E_{2k}^{1,R}\right)
\\[0.3cm]
 &\displaystyle\leqslant\limsup_{k\to+\infty}\mathcal{E}\left(\tilde E_{1k},\tilde E_{2k}\right)\leqslant 0.
\end{array}
\right.\eqno(3.98)
$$
Since

$$\delta_1\leqslant\|\psi_1\|_{L^2(\mathbb{R}^2)}^2+\|\psi_2\|
_{L^2(\mathbb{R}^2)}^2\leqslant\frac{\|Q\|_{L^2(\mathbb{R}^2)}^2}{1+\eta}-\delta'_0,
\eqno(3.99)$$
we have
$$\limsup\limits_{k\rightarrow +\infty}\mathcal{E}\left( \tilde E_{1k}^{1,R},\tilde E_{2k}^{1,R}\right)\leq -\mathcal{E}\left(\psi_1,\psi_2\right)<0.\eqno(3.100)$$
We now can extract a subsequence still denoted by $\left( \tilde E_{1k}^{1,R},\tilde E_{2k}^{1,R}\right)$ such that
$$\left\|\tilde E_{1k}^{1,R}(x)\right\|_{L^2(\mathbb{R}^2)}^2+\left\|\tilde E_{2k}^{1,R}(x)\right\|_{L^2(\mathbb{R}^2)}^2\rightarrow c_{1}<\frac{\|Q\|_{L^2(\mathbb{R}^2)}^2}{1+\eta}-\delta'_1,\eqno(3.101)$$
$$\limsup_{k\to+\infty}\mathcal{E}\left(\tilde E_{1k}^{1,R},\tilde E_{2k}^{1,R}\right)\leqslant  -\mathcal{E}(\psi_1,\psi_2)<0.\eqno(3.102)$$
Then there exists a constant $k_0>0$ such that $\forall k\geqslant k_0$,
$$\mathcal{E}\left(\tilde E_{1k}^{1,R},\tilde E_{2k}^{1,R}\right)\leqslant\frac{-\mathcal{E}(\psi_1,\psi_2)}{2}<0.\eqno(3.103)$$
Note that
$$\left\|\tilde E_{1k}^{1,R}(x)\right\|_{L^2(\mathbb{R}^2)}^2+\left\|\tilde E_{2k}^{1,R}(x)\right\|_{L^2(\mathbb{R}^2)}^2\leqslant\frac{\|Q\|
_{L^2(\mathbb{R}^2)}^2}{1+\eta},\eqno(3.104)$$
then
$$
\left.
\begin{array}{ll}
\mathcal{E}\left(\tilde E_{1k}^{1,R},\tilde E_{2k}^{1,R}\right) &\displaystyle=\int_{\mathbb{R}^2}\left(|\nabla \tilde E_{1k}^{1,R}|^2+|\nabla \tilde E_{2k}^{1,R}|^2\right)dx
 \\[0.4cm]
&\quad \displaystyle-\frac{1}{2}\int_{\mathbb{R}^2}\left(|\tilde E_{1k}^{1,R}|^2+|\tilde E_{2k}^{1,R}|^2\right)^2dx
  \\[0.4cm]
 &\quad \displaystyle-\frac{\eta}{2}\int_{\mathbb{R}^2}\left|\overline{\tilde E_{1k}^{1,R}}
\tilde E_{2k}^{1,R}-\tilde E_{1k}^{1,R}\overline{\tilde E_{2k}^{1,R}}\right|^2dx
 \\[0.4cm]
&\displaystyle\geqslant \int_{\mathbb{R}^2}\left(|\nabla\tilde E_{1k}^{1,R}|^2+|\nabla\tilde E_{1k}^{1,R}|^2\right)dx
 \\[0.4cm]
 &\qquad \displaystyle-\frac{(1+\eta)}{\|Q\|_{L^2(\mathbb{R}^2)}^2}\left(\|\tilde E_{1k}^{1,R}\|_{L^2(\mathbb{R}^2)}^2+\|\tilde E_{2k}^{1,R}\|_{L^2(\mathbb{R}^2)}^2\right)
 \\[0.4cm]
 &\qquad\qquad\qquad  \displaystyle\cdot\int_{\mathbb{R}^2}\left(|\nabla\tilde E_{1k}^{1,R}|^2+|\nabla\tilde E_{2k}^{1,R}|^2\right)dx
 \\[0.4cm]
 & \geqslant  0.
\end{array}
\right.\eqno(3.105)
$$
This is contradictory to (3.103). Hence (3.104) is not true. Therefore, we claim\\
\indent (2)~~If
$$\left\|\tilde E_{1k}^{1,R}(x)\right\|_{L^2(\mathbb{R}^2)}^2+\left\|\tilde E_{2k}^{1,R}(x)\right\|_{L^2(\mathbb{R}^2)}^2>\frac{\|Q\|
_{L^2(\mathbb{R}^2)}^2}{1+\eta}.
\eqno(3.106)$$
In view of (3.103), there exists a constant $c>0$ such that
$$\int_{\mathbb{R}^2}\left(|\tilde E_{1k}^{1,R}|^2+|\tilde E_{2k}^{1,R}|^2\right)^2dx>c.
\eqno(3.107)$$
{\bf Indeed, by $\mathcal{E}\left(\tilde E_{1k}^{1,R}, \tilde E_{2k}^{1,R}\right)<0$,
we have
$$
\left.
\begin{array}{ll}
&\displaystyle \int_{\mathbb{R}^2}\left(|\tilde E_{1k}^{1,R}|^2+|\tilde E_{2k}^{1,R}|^2\right)^2dx
 \\[0.4cm]
 &\qquad \displaystyle >2\int_{\mathbb{R}^2}\left(|\nabla\tilde E_{1k}^{1,R}|^2+|\nabla\tilde E_{2k}^{1,R}|^2\right)dx-\eta\int_{\mathbb{R}^2}\left|\overline{\tilde E_{1k}^{1,R}}\tilde E_{2k}^{1,R}-\tilde E_{1k}^{1,R}\overline{\tilde E_{2k}^{1,R}}\right|^2dx
 \\[0.4cm]
 &\qquad \displaystyle>2\int_{\mathbb{R}^2}\left(|\nabla\tilde E_{1k}^{1,R}|^2+|\nabla\tilde E_{2k}^{1,R}|^2\right)dx -\eta\int_{\mathbb{R}^2}\left(|\tilde E_{1k}^{1,R}|^2+|\tilde E_{2k}^{1,R}|^2\right)^2dx.
\end{array}
\right.
$$
This yields the estimate (3.107).\\
We then iterate the same procedure as above and define
$$
\left\{
\begin{array}{ll}
&\displaystyle\tilde E_{1k}^{1,R}=\tilde E_{1k}^{2}+\tilde E_{1k}^{2,R},
  \\[0.3cm]
 &\displaystyle\tilde E_{2k}^{1,R}=\tilde E_{2k}^{2}+\tilde E_{2k}^{2,R},
\end{array}
\right.\eqno(3.108)
$$
where $\tilde E_{1k}^{2}$ and $\tilde E_{2k}^{2}$ satisfy for a sequence $x^{2}_{k}$,
$$\left\|\tilde E_{1k}^{2}(x^{2}_{k}+\cdot)\right\|_{L^2(|x-x^{2}_{k}|<1)}^2+\left\|\tilde E_{2k}^{2}(x^{2}_{k}+\cdot)\right\|_{L^2(|x-x^{2}_{k}|<1)}^2\geq \delta_{1}.\eqno(3.109)$$
Defining $p$ such that
$$-p \delta_{1}+\left\| E_{10} \right\|_{L^2(\mathbb{R}^2)}^2+\left\| E_{20} \right\|_{L^2(\mathbb{R}^2)}^2
<\frac{\left\|Q \right\|_{L^2(\mathbb{R}^2)}^2}{1+\eta},\eqno(3.110)$$
applying the same procedure at most $p$ times, we find for an $i\leq p$ and $k$ large, there exists a function $\left(\tilde E_{1k}^{i,R},\tilde E_{2k}^{i,R}\right)$ such that
$$\left\|\tilde E_{1k}^{i,R}\right\|_{L^2(\mathbb{R}^2)}^2+\left\|\tilde E_{2k}^{i,R}\right\|_{L^2(\mathbb{R}^2)}^2\leqslant
\frac{\|Q\|_{L^2(\mathbb{R}^2)}^2}{1+\eta},
\eqno(3.111)$$
and
$$\mathcal{E}\left(\tilde E_{1k}^{i,R},\tilde E_{2k}^{i,R}\right)\leqslant\frac{-\mathcal{E} (\psi_1,\psi_2)}{2}<0.
\eqno(3.112)$$
Then by Lemma 2.2, (3.111) and (3.112) are contradictory. }\\
\indent In addition, from (3.4) one gets
$$
\left.
\begin{array}{ll}
&\displaystyle\left\|\nabla\tilde E_{1k}\right\|_{L^2(\mathbb{R}^2)}^2+\left\|\nabla\tilde E_{2k}\right\|_{L^2(\mathbb{R}^2)}^2
  \\[0.4cm]
 &\displaystyle\qquad\geqslant\frac{\int_{\mathbb{R}^2}\left(|\tilde E_{1k}|^2+|\tilde E_{2k}|^2\right)^2dx\cdot\|Q\|_{L^2(\mathbb{R}^2)}^2}{2\left(\|\tilde E_{1k}\|_{L^2(\mathbb{R}^2)}^2+\|\tilde E_{2k}\|_{L^2(\mathbb{R}^2)}^2\right)}
 \\[0.4cm]
 &\displaystyle\qquad>\frac{\eta}{2}\int_{\mathbb{R}^2}\left(|\tilde E_{1k}|^2+|\tilde E_{2k}|^2\right)^2dx
  \\[0.4cm]
  &\displaystyle\qquad\geqslant\frac{\eta}{2}\int_{\mathbb{R}^2}\left|\overline{\tilde E_{1k}}\tilde E_{2k}-\tilde E_{1k}\overline{\tilde E_{2k}}\right|^2dx.
  \end{array}
\right.\eqno(3.113)
$$
Since
$$\limsup_{k\to+\infty}\mathcal{H}_{1}\left(\tilde E_{1k},\tilde E_{1k},\tilde E_{2k},\tilde n_{k}\right)=\limsup_{k\to+\infty}\frac{\mathcal{H}_{1}\left(\tilde E_{1k},\tilde E_{1k},\tilde E_{2k},\tilde n_{k}\right)}{\lambda_{k}^{2}}\leq 0,\eqno(3.114)$$
then
$$\int_{\mathbb{R}^2}\tilde n_k\left(|\tilde E_{1k}|^2+|\tilde E_{2k}|^2\right)dx\to-C<0.\eqno(3.115)$$
It follows from Lemma 3.7 that there exist a constant $C'>0$ and a sequence $x_k$ such that
$$\int_{|x-x_k|<1}|\tilde n_k|dx>C'>0.\eqno(3.116)$$
Therefore, recalling (3.36) and the definition of $\tilde n_k$, we have as $R\to 0$,
$$\liminf_{k\to+\infty}\left(\sup_y\int_{|x-y|<R}|\tilde n_k|dx\right)\to 0.\eqno(3.117)$$
(3.116) and (3.117) are contradictory.\\
\indent This finishes the proof of Proposition 3.6.\hfill$\Box$\\
\\
\indent We now claim the following conclusions to prove the non-vanishing properties of $\left(\tilde E_{1}(0),\tilde E_{2}(0),\tilde n(0)\right)$.\\
\begin{proposition}\label{3.8}
Let $(E_1,E_2,n,\textbf{v})$ be the finite time blow-up solution of the Zakharov system (1.1) and $T$ be its blowup time. That is, as $t\rightarrow T$,
$$\|E_1 \|_{H^1(\mathbb{R}^2)}+\|E_2 \|
_{H^1(\mathbb{R}^2)}+\|n \|_{L^2(\mathbb{R}^2)}
+\|\textbf{v} \|_{L^2(\mathbb{R}^2)}\to+\infty.\eqno(3.118)$$
Assume that the initial data satisfy (3.4), then\\
\indent (1)\quad If $E_1,E_2,n$ are radially symmetric functions, then there exists a constant $m>0$ such that  for any $  R>0$,
$$ \liminf_{t\to T}\left(\|E_1(t,x)\|_{L^2\left(B\left(0,R\right)\right)}^2
+\|E_2(t,x)\|_{L^2\left(B\left(0,R\right)\right)}^2\right)>
\frac{1}{1+\eta}\|Q\|_{L^2(\mathbb{R}^2)}^2,\eqno(3.119)$$
$$ \liminf_{t\to T} \|n(t,x)\|_{L^1\left(B\left(0,R\right)\right)} \geqslant m.\eqno(3.120)$$
\indent (2)\quad If $E_1,E_2,n$ are non-radially symmetric functions, then there exists a sequence $x(t)\in\mathbb{R}^2$ and a constant $m>0$ depending only on initial data such that  for any $  R>0$,
$$\liminf_{t\to T}\left(\|E_1(t,x)\|_{L^2\left(B\left(x\left(t\right),R\right)\right)}^2
+\|E_2(t,x)\|_{L^2\left(B\left(x\left(t\right),R\right)\right)}^2\right)$$
$$>\frac{1}{1+\eta}\|Q\|_{L^2(\mathbb{R}^2)}^2,
 \eqno(3.121)$$
 \\
$$ \liminf_{t\to T}\|n(t,x)\|_{L^1\left(B\left(x\left(t\right),R\right)\right)}\geqslant m. \eqno(3.122)$$
\end{proposition}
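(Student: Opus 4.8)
The plan is to exploit the coercivity dichotomy of Proposition 3.6, applied to the blow-up solution frozen at a sequence of times. Fix $t_k\to T$ and set $(E_{1k},E_{2k},n_k,\mathbf{v}_k):=(E_1,E_2,n,\mathbf{v})(t_k)$; the mass hypothesis (3.34) is exactly the conservation law (1.14). The decisive observation is that the left side of the coercivity estimate (3.37) equals $-C_1+C_2\,\lambda^2(t_k)$, where $\lambda^2(t_k)=\int_{\mathbb{R}^2}\!\left(|\nabla E_{1k}|^2+|\nabla E_{2k}|^2+\tfrac12|n_k|^2+\tfrac12|\mathbf{v}_k|^2\right)dx\to+\infty$ by the definition (2.2) and the blow-up hypothesis (3.118), whereas the right side $\mathcal{H}(E_{1k},E_{2k},n_k,\mathbf{v}_k)=\mathcal{H}_0$ is constant by energy conservation (1.15). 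Thus, for every fixed $R_0>0$ and $\delta_0'>0$, the set of indices $k$ at which (3.35) holds must be finite: otherwise Proposition 3.6 would force (3.37) along that infinite subsequence, which is impossible. The same reasoning applies to (3.36). Hence, for all $k$ large,
\[
\sup_{y}\int_{|x-y|<R_0}\!\!\left(|E_{1k}|^2+|E_{2k}|^2\right)dx>\frac{\|Q\|_{L^2}^2}{1+\eta}-\delta_0',\qquad \sup_{y}\int_{|x-y|<R_0}\!\!|n_k|\,dx>m-\delta_0',
\]
where $m=m_k>0$ is the constant of Proposition 3.6; letting $\delta_0'\downarrow0$ produces the sharp thresholds $\frac{\|Q\|_{L^2}^2}{1+\eta}$ and $m$.

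Next I would convert these concentration bounds into (3.119)--(3.122). The maximizing centres of the two suprema can be taken to coincide, because in Lemma 3.7 the density $n_k$ and the field $(E_{1k},E_{2k})$ concentrate on a common cube through the cross term $\int n_k(|E_{1k}|^2+|E_{2k}|^2)$. In the radial case (part (1)) symmetry pins the centre at the origin: a radial profile carrying mass $\geq\frac{\|Q\|_{L^2}^2}{1+\eta}$ in some $B(y_k,R_0)$ with $|y_k|$ bounded away from $0$ would, upon rotation of $y_k$ about the origin, reproduce that mass over a whole annulus, and a bounded-overlap covering of the annulus would force $\|E_{10}\|_{L^2}^2+\|E_{20}\|_{L^2}^2$ to be arbitrarily large as $R_0\downarrow0$, contradicting mass conservation. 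In the non-radial case (part (2)) the maximizing centres define $x(t)$; the Sobolev bounds of Proposition 3.3 show that, in the rescaled variables of Proposition 2.3, the concentrating bubble occupies the shrinking scale $R_0/\lambda(t)\to0$ of the original variable, so a single sequence $x(t)$ --- the back-transformed centre extracted by the concentration--compactness of Lions \cite{32Lions} --- has the property that every fixed ball $B(x(t),R)$ absorbs the whole bubble for $t$ near $T$. This already yields (3.120) and (3.122), and the non-strict form of (3.119) and (3.121).

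The main obstacle is the \emph{strict} inequality in (3.119) and (3.121), since the dichotomy above delivers only $\geq\frac{\|Q\|_{L^2}^2}{1+\eta}$. The plan is to exclude the marginal case in which the concentrated mass equals $\frac{\|Q\|_{L^2}^2}{1+\eta}$ exactly. In that case the weak limit $(\psi_1,\psi_2)$ of the re-centred rescaled fields would carry mass precisely $\frac{\|Q\|_{L^2}^2}{1+\eta}$ while, by lower semicontinuity and $\mathcal{H}\to0$, satisfying $\mathcal{E}(\psi_1,\psi_2)\leq0$ (as in the estimate leading to (3.98)). The sharp Gagliardo--Nirenberg inequality of Lemma 2.2 then forces $\mathcal{E}(\psi_1,\psi_2)=0$ and rigidifies $(\psi_1,\psi_2)$ into a rescaled copy of $Q$ of critical mass. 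But the strict mass gap $\|E_{10}\|_{L^2}^2+\|E_{20}\|_{L^2}^2>\frac{\|Q\|_{L^2}^2}{1+\eta}$ in hypothesis (3.4) leaves a positive amount of mass outside this bubble, which by the same coercivity argument must itself concentrate and hence augment the mass seen in $B(x(t),R)$ beyond the critical value. Turning this rigidity into a gap uniform in $t$ is the delicate step; the remainder is bookkeeping with the scaling identities (2.5)--(2.7).
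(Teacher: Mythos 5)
Your core mechanism is sound and, for part (2), is essentially the paper's own route: the paper proves (3.121)--(3.122) precisely by assuming the negation for some $R_0,\delta_0$, invoking the coercivity of Proposition 3.6 (cited there as Lemma 3.7) to bound $\lambda^2(t_k)$ by $\mathcal{H}_0$ plus a constant, and contradicting blow-up. For part (1) the paper does \emph{not} route through Proposition 3.6; it reruns a self-contained rescaling-plus-compactness argument using the compact embedding $H^1_r(\mathbb{R}^2)\hookrightarrow L^4_r(\mathbb{R}^2)$ to pin the limit profile at the origin, whereas you pin the centre by a rotation/bounded-overlap covering argument. Both are viable, and your identification of (3.34) with mass conservation and of the left side of (3.37) with $-C_1+C_2\lambda^2(t_k)$ is exactly right.

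There are, however, two genuine gaps. First, the passage from the negation of (3.35) to a single sequence $x(t)$ valid for \emph{every} $R>0$: the negation gives, for each fixed $R_0$, a centre $y_k(R_0)$ with $\int_{|x-y_k|<R_0}(|E_{1k}|^2+|E_{2k}|^2)>\frac{\|Q\|^2}{1+\eta}-\delta_0'$, and these balls have \emph{fixed} radius $R_0$ in the original variable. Your justification that ``the concentrating bubble occupies the shrinking scale $R_0/\lambda(t)$'' conflates the original and rescaled variables; (3.35) is stated for $(E_{1k},E_{2k})=(E_1,E_2)(t_k)$ before rescaling, so to conclude for small $R$ you must rerun the dichotomy with $R_0=R$ and then show the centres can be chosen independently of $R_0$ (nestedness of the concentration points). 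Since the concentrated mass $\frac{\|Q\|^2}{1+\eta}$ need not exceed half the total mass (that would require $\eta>1$ under (3.4)), the standard ``more than half the mass determines the centre'' shortcut is unavailable, and this step needs an actual argument. Second, the strictness upgrade: your rigidity sketch ends with the claim that the leftover mass ``must itself concentrate and hence augment the mass seen in $B(x(t),R)$,'' but nothing forces the residual mass to concentrate, let alone at the same centre --- it can disperse or escape to infinity. You correctly flag this as the delicate step, but as written it is not a proof. Note, though, that the paper's own contradiction hypothesis (3.124) is ``$\leqslant \frac{\|Q\|^2}{1+\eta}-\delta_0$,'' whose negation over all $\delta_0>0$ yields only the non-strict bound $\liminf\geqslant\frac{\|Q\|^2}{1+\eta}$; so the strict inequality in (3.119) and (3.121) is not actually delivered by the paper's argument either, and matching the paper's proven content does not require your rigidity step.
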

{\bf Proof.} We first show the case (1):  ~$(E_1,E_2,n)\in H_r^1(\mathbb{R}^2)\times H_r^1(\mathbb{R}^2)\times L_r^2(\mathbb{R}^2)$. \\
\indent Define two spaces:\\
$$H_r^1(\mathbb{R}^2)=\left\{f\in H^1(\mathbb{R}^2), f(x)=f(|x|)\right\},$$
$$L_r^2(\mathbb{R}^2)=\left\{f\in L^2(\mathbb{R}^2), f(x)=f(|x|)\right\}.$$
It follows from (1.3),(3.38) and (3.39) that
$$\mathcal{E}(E_1,E_2) \leqslant \mathcal{H}_1(E_1,E_2,n,\textbf{v}) = \mathcal{H}(E_1,E_2,n,\textbf{v}) - \frac{1}{2} \|\textbf{v}\|_{L^{2}(\mathbb{R}^2)}^2.\eqno(3.123)$$
We proceed our proof by contradiction. \\

Assume that there exist constants $\delta_0>0$, $R_0>0$  and a sequence $t_k\to T~~(k\to+\infty)$ such that
$$\int_{|x|<R_0}\left(|E_1(t_k,x)|^2+|E_2(t_k,x)|^2\right)dx
\leqslant\frac{1}{(1+\eta)}\|Q\|_{L^2(\mathbb{R}^2)}^2-\delta_0, \eqno(3.124)$$
\\
or
$$\liminf_{k\to+\infty} \int_{|x|<R_0}|n\left(t_k,x\right)|dx =0. \eqno(3.125)$$
We then complete the proof of case (1) by scaling and compactness method.\\
\indent Let
$$
\left\{
\begin{array}{ll}
&\displaystyle E_{1k}(x)=\frac{1}{\lambda_k}E_1\left(t_k,\frac{x}{\lambda_k}\right),\quad E_{2k}(x)=\frac{1}{\lambda_k}E_2\left((t_k,\frac{x}{\lambda_k}\right),
  \\[0.4cm]
 &\displaystyle n_k(x)=\frac{1}{\lambda_k^2}n\left(t_k,\frac{x}{\lambda_k}\right),
\quad {v}_k(x)=\frac{1}{\lambda_k^2}\textbf{v}\left(t_k,\frac{x}{\lambda_k}\right),
\end{array}
\right.\eqno(3.126)
$$
where
$$\lambda_k^2=\left\|\nabla E_1(t_k,x)\right\|_{L^2(\mathbb{R}^2)}^2+\left\|\nabla E_2(t_k,x)\right\|_{L^2(\mathbb{R}^2)}^2.$$
Direct calculation gives
$$
\left\{
\begin{array}{ll}
&\displaystyle \int_{\mathbb{R}^2}|\nabla E_{1k}|^2dx+\int_{\mathbb{R}^2}|\nabla E_{2k}|^2dx=1,
 \\[0.4cm]
&\displaystyle\int_{\mathbb{R}^2}|E_{1k}|^2dx+\int_{\mathbb{R}^2}|E_{2k}|^2dx
=\int_{\mathbb{R}^2}|E_{10}|^2dx+\int_{\mathbb{R}^2}|E_{20}|^2dx,
 \\[0.4cm]
&\displaystyle\mathcal{E}\left(E_{1k},E_{2k}\right)=
\frac{1}{\lambda_k^2}\mathcal{E}\left(E_1\left(t_k,x\right),
E_2\left(t_k,x\right)\right),
 \\[0.4cm]
&\displaystyle\mathcal{H}_1\left(E_{1k},E_{2k},n_k\right)=\frac{1}{\lambda_k^2}\mathcal{H}_1
\left(E_1\left(t_k,x\right),E_2\left(t_k,x\right),n\left(t_k,x\right)\right),
 \\[0.4cm]
&\displaystyle\mathcal{H}(t_k)=\mathcal{H}(0).
\end{array}
\right.\eqno(3.127)
$$
Note that
$$
\left.
\begin{array}{ll}
 \mathcal{H}(t_k)=&\displaystyle \mathcal{E}\left(E_1(t_k,x),E_2(t_k,x)\right)
+\frac{1}{2}\int_{\mathbb{R}^2}|\textbf{v}(t_k)|^2dx
 \\[0.3cm]
 &\displaystyle\quad+\frac{1}{2}
\int_{\mathbb{R}^2}\left[n(t_k)+\left(|E_1(t_k)|^2+|E_2(t_k)|^2\right)\right]^2dx.
 \end{array}
\right.\eqno(3.128)
$$
 We then conclude
$$\mathcal{E}\left(E_1(t_k),E_2(t_k)\right)\leqslant\mathcal{H}_1\left(E_1(t_k),
E_2(t_k),n(t_k)\right)\leqslant\mathcal{H}(t_k)=\mathcal{H}(0),
\eqno(3.129)$$
$$\mathcal{E}\left(E_{1k},E_{2k}\right)\leqslant\mathcal{H}_1
\left(E_{1k},E_{2k},n_k,\textbf{v}_k\right)\leqslant\frac{1}
{\lambda_k^2}\mathcal{H}(0)\stackrel{k\to+\infty}{\longrightarrow}0.
\eqno(3.130)$$
Especially,
$$\limsup_{k\to+\infty}\mathcal{E}\left(E_{1k},E_{2k}\right)\leqslant 0,~~~
\limsup_{k\to+\infty}\mathcal{H}_1\left(E_{1k},E_{2k},n_{k}\right)\leqslant 0.\eqno(3.131)$$
Using Cauchy-Schwartz inequality: $\displaystyle ab\leq \frac{(a+b)^{2}}{4}~(a>0,~b>0)$, we obtain
$$
\left.
\begin{array}{ll}
  &\displaystyle\mathcal{E}(E_1,E_2)
  \\[0.3cm]
   &\displaystyle\qquad\geqslant\left\|\nabla E_1\right\|_{L^2(\mathbb{R}^2)}^2+\left\|\nabla E_2\right\|_{L^2(\mathbb{R}^2)}^2
  \\[0.4cm]
  &\displaystyle\qquad\quad-\frac{1}{2}\int_{\mathbb{R}^2}\left(|E_1|^2+|E_2|^2\right)^2dx
-2\eta\int_{\mathbb{R}^2}|E_1|^2|E_2|^2dx
  \\[0.4cm]
   &\displaystyle\qquad\geqslant\|\nabla E_1\|_{L^2(\mathbb{R}^2)}^2+\|\nabla E_2\|_{L^2(\mathbb{R}^2)}^2
-\frac{1+\eta}{2}\int_{\mathbb{R}^2}\left(|E_1|^2+|E_2|^2\right)^2dx.
    \end{array}
\right.\eqno(3.132)
$$
This together with (3.127) and (3.131) yields
$$
\left.
\begin{array}{ll}
  &\displaystyle\liminf_{k\to+\infty}\int_{\mathbb{R}^2}\left(|E_{1k}|^2+|E_{2k}|^2\right)^2dx
  \\[0.4cm]
   &\displaystyle\qquad\geqslant\frac{2}{1+\eta}\liminf_{k\to+\infty}
   \left(\int_{\mathbb{R}^2}\left(\left|\nabla E_{1k}\right|^2+\left|\nabla E_{2k}\right|^2\right)dx-\mathcal{E}\left(E_{1k},E_{2k}\right)\right)
  \\[0.4cm]
  &\displaystyle\qquad\geqslant\frac{2}{1+\eta}.
  \end{array}
\right.\eqno(3.133)
$$
On the other hand, since
$$
\left.
\begin{array}{ll}
&\displaystyle\int_{\mathbb{R}^2}\left(n_k+\left(|E_{1k}|^2+|E_{2k}|^2\right)\right)^2dx
-(1+\eta)\int_{\mathbb{R}^2}\left(|E_{1k}|^2+|E_{2k}|^2\right)^2dx
 \\[0.4cm]
&\displaystyle\quad
 \leqslant\int_{\mathbb{R}^2}\left(n_k+\left(|E_{1k}|^2+|E_{2k}|^2\right)\right)^2dx
-\int_{\mathbb{R}^2}\left(|E_{1k}|^2+|E_{2k}|^2\right)^2dx
\\[0.4cm]
&\displaystyle\qquad-\eta\int_{\mathbb{R}^2}\left|\overline{E_{1k}}E_{2k}
-E_{1k}\overline{E_{2k}}\right|^2dx
\\[0.4cm]
 &\displaystyle\quad=2\left(\mathcal{H}_1\left(E_{1k},E_{2k},n_k\right)-\|\nabla E_{1k}\|_{L^2(\mathbb{R}^2)}^2-\|\nabla E_{2k}\|_{L^2(\mathbb{R}^2)}^2\right),
\end{array}
\right.
$$
one has
$$
\left.
\begin{array}{ll}
&\displaystyle\limsup_{k\to+\infty}\int_{\mathbb{R}^2}\left(n_k+\left(|E_{1k}|^2
+|E_{2k}|^2\right)\right)^2dx
  \\[0.3cm]
 &\displaystyle\qquad -(1+\eta)\int_{\mathbb{R}^2}\left(|E_{1k}|^2+|E_{2k}|^2\right)^2dx\leqslant-2.
\end{array}
\right.\eqno(3.134)
$$
Noting Lemma 2.2, (3.4), (3.127) and
$$
\left.
\begin{array}{ll}
\displaystyle
\eta\int_{\mathbb{R}^2}\left(|E_{1k}|^2+|E_{2k}|^2\right)^2dx&\displaystyle\leqslant
 \frac{2\eta\left(\|E_{1k}\|_{L^2(\mathbb{R}^2)}^2
+\|E_{2k}\|_{L^2(\mathbb{R}^2)}^2\right)}{\|Q\|_{L^2(\mathbb{R}^2)}^2}
 \\[0.4cm]
&\displaystyle\quad\cdot\left(\|\nabla E_{1k}\|_{L^2(\mathbb{R}^2)}^2+\|\nabla E_{2k}\|_{L^2(\mathbb{R}^2)}^2\right)
 \\[0.3cm]
&\displaystyle
\leqslant  2,
 \end{array}
\right.
$$
we get
$$
\left.
\begin{array}{ll}
 &\displaystyle\limsup_{k\to+\infty}\int_{\mathbb{R}^2}\left(n_k+\left(|E_{1k}|^2+|E_{2k}|^2\right)
\right)^2dx -\int_{\mathbb{R}^2}\left(|E_{1k}|^2+|E_{2k}|^2\right)^2dx\leqslant 0.
 \end{array}
\right.
$$
In view of (3.38), one obtains
$$
\left.
\begin{array}{ll}
&\displaystyle\frac{1}{2}\int_{\mathbb{R}^2}\left[\left(|E_1|^2+|E_2|^2\right)^2
+\frac{\eta}{2}\left|\overline{E_1}E_2-E_1\overline{E_2}\right|^2\right]dx
  \\[0.4cm]
 &\displaystyle\quad=\|\nabla E_1\|_{L^2(\mathbb{R}^2)}^2+\|\nabla E_2\|_{L^2(\mathbb{R}^2)}^2-\mathcal{E}(E_1,E_2).
\end{array}
\right.
$$
Hence there holds
$$
\left.
\begin{array}{ll}
&\displaystyle  \liminf_{k\to+\infty}\left(\int_{\mathbb{R}^2}
\left(|E_{1k}|^2+|E_{2k}|^2\right)^2dx
+\eta\int_{\mathbb{R}^2}\left|\overline{E_{1k}}E_{2k}
-E_{1k}\overline{E_{2k}}\right|^2dx\right)
  \\[0.4cm]
 &\displaystyle\qquad=2\liminf_{k\to+\infty}\left(\|\nabla E_{1k}\|_{L^2(\mathbb{R}^2)}^2+\|\nabla E_{2k}\|_{L^2(\mathbb{R}^2)}^2-\mathcal{E}\left(E_{1k},E_{2k}\right)\right)
  \\[0.3cm]
   &\displaystyle\qquad\geqslant 2.
\end{array}
\right.
$$
In addition, (3.39) yields
$$
\left.
\begin{array}{ll}
&\displaystyle\frac{1}{2}\int_{\mathbb{R}^2}n_k^2dx
 \\[0.4cm]
&\displaystyle\quad= \mathcal{H}_1\left(E_{1k},E_{2k},n_k\right)-\|\nabla E_{1k}\|_{L^2(\mathbb{R}^2)}^2-\|\nabla E_{2k}\|_{L^2(\mathbb{R}^2)}^2
  \\[0.4cm]
&\displaystyle\qquad-\int_{\mathbb{R}^2}n_k\left(|E_{1k}|^2+|E_{2k}|^2\right)dx
+\frac{\eta}{2}\int_{\mathbb{R}^2}\left|\overline{E_{1k}}E_{2k}-E_{1k}
\overline{E_{2k}}\right|^2dx.
   \end{array}
\right.
$$
Now for any $\epsilon\in(0,1)$, by Young's inequality, H\"{o}lder inequality and Cauchy -Schwartz inequality: $ab\leq\dfrac{(a+b)^{2}}{4},~~\forall~~a,b>0$, we have
$$
\left.
\begin{array}{ll}
\displaystyle\int_{\mathbb{R}^2}n_k^2dx&\displaystyle\leqslant  2\int_{\mathbb{R}^2}-n_k\left(|E_{1k}|^2+|E_{2k}|^2\right)dx
\\[0.4cm]
&\displaystyle\quad+\eta\int_{\mathbb{R}^2}\left|\overline{E_{1k}}E_{2k}-E_{1k}
\overline{E_{2k}}\right|^2dx
\\[0.4cm]
&\displaystyle\leqslant \epsilon\|n_k\|_{L^2(\mathbb{R}^2)}^2
+\frac{1}{\epsilon}\int_{\mathbb{R}^2} \left(|E_{1k}|^2+|E_{2k}|^2\right)^2dx
  \\[0.4cm]
 &\displaystyle\quad+\eta\int_{\mathbb{R}^2}\left(|E_{1k}|^2+|E_{2k}|^2\right)^2dx.
\end{array}
\right.
$$
This yields that
$$
\left.
\begin{array}{ll}
 \displaystyle\limsup_{k\to+\infty}\int_{\mathbb{R}^2}n_k^2dx&\displaystyle\leqslant
\frac{1+\eta\epsilon}{(1-\epsilon)\epsilon}\int_{\mathbb{R}^2}
\left(|E_{1k}|^2+|E_{2k}|^2\right)^2dx
  \\[0.4cm]
 &\displaystyle\leqslant\frac{2+2\eta\epsilon}{(1-\epsilon)\epsilon}
 \cdot\frac{\|E_{1k}\|_{L^2(\mathbb{R}^2)}^2+\|E_{2k}\|_{L^2(\mathbb{R}^2)}^2}
 {\|Q\|_{L^2(\mathbb{R}^2)}^2}.
 \end{array}
\right.\eqno(3.135)
$$
Due to (3.124), (3.125) and $\lambda_k\stackrel{k\to+\infty}{\longrightarrow}+\infty$, one obtains $\forall R>0$,
$$\limsup_{k\to+\infty}\int_{|x|<R}\left(|E_{1k}|^2+|E_{2k}|^2\right)dx
\leqslant\frac{1}{1+\eta}\|Q\|_{L^2}^2-\delta_0,\eqno(3.136)$$
or
$$\forall R>0,~~\limsup_{k\to+\infty}\int_{|x|<R}|n_k|dx=0.\eqno(3.137)$$
We continue to our discussion by a compactness argument. \\

By (3.127) and (3.135), there exists $\left(E'_1,E'_2,N'\right)\in H^1(\mathbb{R}^2)\times H^1(\mathbb{R}^2)\times L^2(\mathbb{R}^2)$ such that
$$\left(E_{1k},E_{2k}\right)\rightharpoonup\left(E'_1,E'_2\right)~\mbox{in}~H^1(\mathbb{R}^2)\times H^1(\mathbb{R}^2),\eqno(3.138)$$
$$n_k\rightharpoonup N'~\mbox{in}~L^2(\mathbb{R}^2).\eqno(3.139)$$
Since $H_r^1(\mathbb{R}^2)\hookrightarrow L_r^p(\mathbb{R}^2)~(2<p<+\infty)$ is compact, we obtain
$$\left(E_{1k},E_{2k}\right)\rightharpoonup\left(E'_1,E'_2\right)~\mbox{in}~L^4(\mathbb{R}^2)\times L^4(\mathbb{R}^2).\eqno(3.140)$$
On one hand,
$$\left(E^{2}_{1k},E^{2}_{2k}\right)\rightharpoonup\left(E'^{2}_1,E'^{2}_2\right)~\mbox{in}~L^2(\mathbb{R}^2)\times L^2(\mathbb{R}^2).\eqno(3.141)$$
On the other hand,
$$\int_{\mathbb{R}^2}\left(|E'_1|^2+|E'_2|^2\right)^2dx\geqslant\frac{1}{1+\eta},  \quad \left(E'_1,E'_2\right)\not\equiv(0,0).\eqno(3.142)$$
Let $R\to+\infty$, it follows from (3.136) that
$$\int_{\mathbb{R}^2}\left(|E'_1|^2+|E'_2|^2\right)dx
<\frac{1}{ 1+\eta}\|Q\|_{L^2(\mathbb{R}^2)}^2,\eqno(3.143)$$
or
$$N'=0.\eqno(3.144)$$
Then from the boundedness of weakly convergent sequence, we have
$$
\left.
\begin{array}{ll}
 \displaystyle\int_{|x|<R}\left(|E'_1|^2+|E'_2|^2\right)dx&\displaystyle\leqslant\liminf_{k\to+\infty}
\int_{|x|<R}\left(|E_{1k}|^2+|E_{2k}|^2\right)dx
  \\[0.4cm]
 &\displaystyle\leqslant\frac{1}{1+\eta}\|Q\|_{L^2(\mathbb{R}^2)}^2-\delta_0,
\end{array}
\right.\eqno(3.145)
$$
or
$$\int_{|x|<R}|N'|dx\leqslant\liminf_{k\to+\infty}\int_{|x|<R}|n_k|dx=0.\eqno(3.146)$$
In addition, there hold:
$$\lim _{k\rightarrow+\infty}\int_{\mathbb{R}^{2}}n_{k}\left(|E_{k}|^{2}+|E_{2k}|^{2}\right)dx
=\lim _{k\rightarrow+\infty}\int_{\mathbb{R}^{2}}N'\left(|E'_{1}|^{2}+|E'_{2}|^{2}\right)dx,
\eqno(3.147)$$
\\
$$\lim _{k\rightarrow+\infty}\int_{\mathbb{R}^{2}}|E_{1k}|^{2}|E_{2k}|^{2}dx=\lim _{k\rightarrow+\infty}\int_{\mathbb{R}^{2}}\left|E'_{1}|^{2}|E'_{2}\right|^{2}dx,
\eqno(3.148)$$
\\
$$\lim_{k\rightarrow+\infty}Re\int_{\mathbb{R}^{2}}\left(E_{1k}\right)^2
\left(\overline{E_{2k}}\right)^2dx
=\lim_{k\rightarrow+\infty}Re\int_{\mathbb{R}^2}\left(E'_1\right)^2\left(\overline{E'_2}\right)^2dx,
\eqno(3.149)$$
\\
$$\lim _{k\rightarrow+\infty}\int_{\mathbb{R}^{2}}\left|\overline{E_{1k}}E_{2k}-E_{1k}
\overline{E_{2k}}\right|^2dx=\lim _{k\rightarrow+\infty}\int_{\mathbb{R}^{2}}\left|\overline{E'_{1}}E'_{2}
-E'_1\overline{E'_2}\right|^2dx.\eqno(3.150)$$
\indent Indeed, from Lemma 2.1 and (3.141) it follows that (3.147) holds.\\

\indent Next, direct calculation yields
$$
\left.
\begin{array}{ll}
&\displaystyle\left|\int_{\mathbb{R}^{2}}|E_{1k}|^{2}|E_{2k}|^{2}dx
-\int_{\mathbb{R}^2}\left|E'_{1}\right|^{2}\left|E'_{2}\right|^{2}dx\right|
  \\[0.4cm]
 &\displaystyle\qquad= \int_{\mathbb{R}^2}\left(|E_{1k}|^2-|E'_1|^2\right)|E_{2k}|^2dx
+ \int_{\mathbb{R}^2}|E'_1|^2\left(|E_{2k}|^2-|E'_2|^2\right)dx
\\[0.4cm]
&\displaystyle\qquad\leqslant\left\| \left(|E_{1k}|^2-|E'_1|^2\right)\right\|_{L^2(\mathbb{R}^2)}
\left\| E_{2k}^2\right\|_{L^2(\mathbb{R}^2)}
\\[0.4cm]
&\displaystyle\qquad\quad +\left\|{E'_{1}}^2\right\|_{L^2(\mathbb{R}^2)}\left\| |E_{2k}|^2-|E'_2|^2 \right\|_{L^2(\mathbb{R}^2)}.
\end{array}
\right.
$$
Let $k\to+\infty$, one gets (3.148). \\

We next note that
$$
\left.
\begin{array}{ll}
&\displaystyle\left|Re\int_{\mathbb{R}^2}\left(E_{1k}\right)^2\left(\overline{E_{2k}}\right)^2dx
-Re\int_{\mathbb{R}^2}\left(E'_1\right)^2\left(\overline{E'_2}\right)^2dx\right|
  \\[0.4cm]
 &\displaystyle\qquad\leqslant\int_{\mathbb{R}^2}
 \left|\left(E_{1k}\right)^2\left(\overline{E_{2k}}\right)^2
-\left(E'_1\right)^2\left(\overline{E'_2}\right)^2\right|dx
  \\[0.4cm]
 &\displaystyle\qquad=\int_{\mathbb{R}^2}\left|\left[\left(E_{1k}\right)^2
-\left(E'_1\right)^2\right]\left(\overline{E_{2k}}\right)^2+
\left(E'_1\right)^2\left[\left(\overline{E_{2k}}\right)^2
-\left(\overline{E'_2}\right)^2\right]\right|dx
  \\[0.4cm]
 &\displaystyle\qquad\leqslant \left\|\left(E_{1k}\right)^2-\left(E'_1\right)^2\right\|_{L^2(\mathbb{R}^2)}
\left\| |\overline{E_{2k}}|^2\right\|_{L^2(\mathbb{R}^2)}
\\[0.4cm]
&\displaystyle\qquad\quad+\left\| ~\left|E'_1\right|^2\right\|_{L^2(\mathbb{R}^2)}\left\|\left(\overline{E_{2k}}\right)^2
-\left(\overline{E'_2}\right)^2\right\|_{L^2(\mathbb{R}^2)}.
\end{array}
\right.
$$
\\
As $k\to+\infty$, we get (3.149).  Moreover, according to (3.148) and (3.149), (3.150) holds. We now use estimats (3.130), (3.147) and (3.150) to get
$$\mathcal{H}_1\left(E'_1,E'_2,N'\right)\leqslant\liminf_{k\to+\infty}
\mathcal{H}_1\left(E_{1k},E_{2k},n_k\right)\leqslant 0,\eqno(3.151)$$
that is,
$$\mathcal{E}\left(E'_1,E'_2\right)+\frac{1}{2}
\int_{\mathbb{R}^2}\left[N'+\left(\left|E'_1\right|^2+\left|E'_2\right|^2\right)\right]^2dx\leqslant 0.\eqno(3.152)$$
According to  $\displaystyle\int_{\mathbb{R}^2}\left(\left|E'_1\right|^2+\left|E'_2\right|^2\right)dx<
\frac{\|Q\|_{L^2(\mathbb{R}^2)}^2}{1+\eta}$, (3.152)  then yields
$$
\left.
\begin{array}{ll}
&\displaystyle\int_{\mathbb{R}^2}\left(\left |\nabla E'_1\right|^2 + \left|\nabla E'_2\right|^2 \right)dx
  \\[0.4cm]
 & \displaystyle\quad\leqslant \frac{1}{2} \int_{\mathbb{R}^2} \left(\left|E'_1\right|^2 + \left|E'_2\right|^2\right)^2dx + \frac{\eta}{2} \int_{\mathbb{R}^2}\left | \overline{E' } _1 E'_2 - E'_1 \overline{E' }_2\right|^2dx
  \\[0.4cm]
  &\displaystyle\quad \leqslant \frac{\eta +1}{2} \int_{\mathbb{R}^2}
 \left( \left|E'_1\right|^2 + \left|E'_2\right|^2\right)^2dx
  \\[0.4cm]
  &\displaystyle\quad\leqslant (1 + \eta) \frac{\left\| E'_1\right\|_{L^2(\mathbb{R}^2)}^2 + \left\|E'_2\right\|_{L^2(\mathbb{R}^2)}^2}{\|Q\|_{L^2(\mathbb{R}^2)}^2} \left(\left\| \nabla E'_1\right\|_{L^2(\mathbb{R}^2)}^2 + \left\| \nabla E'_2\right\|_{L^2(\mathbb{R}^2)}^2\right)
  \\[0.4cm]
 &\displaystyle\quad <  \left\| \nabla E'_1\right\|_{L^2(\mathbb{R}^2)}^2 + \left\| \nabla E'_2\right\|_{L^2(\mathbb{R}^2)}^2.
\end{array}
\right.
$$
\\
This is a contradiction.\\
\indent On the other hand, if $N'=0$, (3.4) yields
$$
\left.
\begin{array}{ll}
&\displaystyle  \mathcal{H}_{1} \left(E'_{1},E'_{2},0\right)
  \\[0.4cm]
  &\displaystyle\qquad\geqslant\int_{\mathbb{R}^{2}}\left(\left|\nabla E'_{1}\right|^{2}+\left|\nabla E'_{2}\right|^{2}\right)dx-2\eta\int_{\mathbb{R}^{2}}\left|E'_{1}\right|^{2}\left|E'_{2}\right|^{2}dx    \\[0.4cm]
  &\displaystyle\qquad\geqslant \int_{\mathbb{R}^{2}}\left(\left|\nabla E'_{1}\right|^{2}+\left|\nabla E'_{2}\right|^{2}\right)dx-\frac{\eta}{2}\int_{\mathbb{R}^{2}}
\left(\left|E'_{1}\right|^{2}+\left|E'_{2}\right|^{2}\right)^2dx
    \\[0.4cm]
  &\displaystyle\qquad\geqslant\int_{\mathbb{R}^{2}}\left(\left|\nabla E'_{1}\right|^{2}+\left|\nabla E'_{2}\right|^{2}\right)dx
      \\[0.4cm]
  &\displaystyle\qquad\quad-\eta\frac{\left(\left\|E'_{1}\right\|_{L^2(\mathbb{R}^2)}^{2}
+\left\|E'_{2}\right\|_{L^2(\mathbb{R}^2)}^{2}\right)}{\|Q\|_{L^2(\mathbb{R}^2)}^2}
\cdot\left(\left\|\nabla E'_{1}\right\|_{L^2(\mathbb{R}^2)}^{2}+\left\|\nabla E'_{2}\right\|_{L^2(\mathbb{R}^2)}^{2}\right)
  \\[0.4cm]
  &\displaystyle\qquad\geqslant \int_{\mathbb{R}^{2}}\left(\left|\nabla E'_{1}\right|^{2}+\left|\nabla E'_{2}\right|^{2}\right)dx\left(1-\frac{\eta\left(\|E_{10}\|_{L^2(\mathbb{R}^2)}^{2}
+\|E_{20}\|_{L^2(\mathbb{R}^2)}^{2}\right)}{\|Q\|_{L^2(\mathbb{R}^2)}^{2}}\right)    \\[0.4cm]
&\displaystyle\qquad>0,
\end{array}
\right.
$$
which is contradictory to (3.151). Hence there exists a constant $m>0$ depending only on initial data such that for any $R>0$, (3.119) and (3.120) hold.\\
\indent Now we turn to consider the non-radial case (2). Assume that there exist constants $R_0>0,~~\delta_0>0$ and a sequence $t_k$ such that as $t_k\to T~(k\to+\infty)$,
$$\liminf_{k\to+\infty}\left(\sup_{y}\int_{|x-y|<R_0}
\left(|E_1(t_k,x)|^2+|E_2(t_k,x)|^2\right)dx\right)\leqslant
\frac{\|Q\|_{L^2(\mathbb{R}^2)}^2}{1+\eta}-\delta_0,$$
or
$$\liminf_{k\to+\infty}\left(\sup_{y}\int_{|x-y|<R_0}|n(t_k,x)|dx\right)\leqslant m_n-\delta_0.$$
Then it follows from Lemma 3.7 that as $t_k\to T$,
$$\int_{\mathbb{R}^2}\left(\left|\nabla E_1(t_k)\right|^2+\left|\nabla E_2(t_k)\right|^2+\left|n(t_k)\right|^2+\left|\textbf{v}(t_k)\right|^2\right)dx\leqslant C.$$
This is contradictory to the assumption that $(E_1,E_2,n,\textbf{v})$ blows up at a finite time $T$. So (3.121) and (3.122) hold.\\
\indent This finishes the proof of Proposition 3.8.\hfill$\Box$\\

\indent We are now in the position to prove Proposition 3.5 by utilizing Proposition 3.6, Lemma 3.7 and Proposition 3.8.\\
\\
{\bf Proof of Proposition 3.5.}\\
\\
\indent Due to (2.1), Proposition 3.8 implies the conclusion (1) in Proposition 3.5.\\
\indent In fact, let $R_{1}>0$ be a fixed constant, then
$$
\left.
\begin{array}{ll}
&\displaystyle\left\|\tilde E_1(0,x)\right\|_{L^2(|x-x(t)| \leqslant R_1)}^2 + \left\| \tilde E_2(0,x)\right\|_{L^2(|x-x(t)| \leqslant R_1)}^2
\\[0.4cm]
 &\displaystyle \quad= \left\|E_1(t,x)\right\|_{L^2\left(|x-x(t)| \leqslant \frac{R_1}{\lambda(t)}\right)}^2 +  \left\|E_2(t,x)\right\|_{L^2\left(|x-x(t)| \leqslant \frac{R_1}{\lambda(t)}\right)}^2.
\end{array}
\right.
$$
Noting that $\lambda(t) \to +\infty$ as $t \to T$ and (3.119), we have
$$
\left.
\begin{array}{ll}
&\displaystyle\liminf_{t \to T} \left(\| \tilde E_1(0,x)\|_{L^2(|x-x(t)|\leqslant R_1)}^2 + \| \tilde E_2(0,x)\|_{L^2(|x-x(t)|\leqslant R_1)}^2 \right)
\\[0.4cm]
 &\displaystyle\qquad\geqslant \liminf_{t \to T} \left(\|E_1(t,x)\|_{L^2\left(|x-x(t)| \leqslant \frac{R_1}{\lambda(t)}\right)}^2 +  \|E_2(t,x)\|_{L^2\left(|x-x(t)| \leqslant \frac{R_1}{\lambda(t)}\right)}^2 \right)
 \\[0.4cm]
 &\displaystyle\qquad\geqslant \frac{\|Q\|_{L^{2}(\mathbb{R}^2)}^2}{1 + \eta}.
\end{array}
\right.
$$
On the other hand, in view of Proposition 3.8, for any fixed $R_{1}>0$, H\"older's inequality yields
$$
\left.
\begin{array}{ll}
&\displaystyle  R_1^{\frac{1}{2}}  \liminf_{t \to T} \left\|\tilde n(0,x)\right\|_{L^2(|x-x(t)| \leqslant R_1)}
\\[0.4cm]
 &\displaystyle\qquad \geqslant  \liminf_{t \to T} \left\|\tilde n(0,x) \right\|_{L^1(|x-x(t)| \leqslant R_1)}
  \\[0.4cm]
 &\displaystyle\qquad\geqslant  \liminf_{t \to T} \left\|n(t,x)\right\|_{L^1\left(|x-x(t)| \leqslant \frac{R_1}{\lambda(t)}\right)}
\\[0.4cm]
 &\displaystyle\qquad\geqslant m_n.
 \end{array}
\right.
$$
Hence (3.26) and (3.27) hold.\\
\indent We now show conclusion (2) in Proposition 3.5 by using the same scaling argument as that adopted in Proposition 3.8. \\
\indent For a sequence $t_k\to T$~($k\to +\infty$), let
$$
\left\{
\begin{array}{ll}
&\displaystyle \hat{E}_{1n}=\frac{1}{\tilde\lambda_n}\tilde{E_1}\left(\frac{x}{\tilde\lambda_n}\right),~~\hat E_{2n}=\frac{1}{\tilde\lambda_n}\tilde{E_2}\left(\frac{x}{\tilde\lambda_n}\right),
\\[0.5cm]
 &\displaystyle \hat{n}_{n} = \frac{1}{\tilde\lambda_n^2} \tilde n \left(t_n,\frac{x}{\tilde\lambda_n}\right),\hat{\textbf{v}}_n = \frac{1}{\tilde\lambda_n^2} \tilde {\textbf{v}} \left(t_n,\frac{x}{\tilde\lambda_n}\right)
 \end{array}
\right.
$$
satisfy
$$\int_{\mathbb{R}^2}\left(\left|\nabla\hat E_{1n}\right|^2+\left|\nabla\hat E_{2n}\right|^2+\frac{1}{2}\left|\hat n_n\right|^2+\frac{1}{2}\left|\hat{\textbf{v}}_n\right|^2\right)dx=1.$$
Here,
$$
\left.
\begin{array}{ll}
&\displaystyle\tilde\lambda_n^2(t)=\int_{\mathbb{R}^2}\left(\left|\nabla\tilde E_1\left(0,x\right)\right|^2+\left|\nabla\tilde E_2\left(0,x\right)\right|^2\right.
 \\[0.4cm]
 &\displaystyle\qquad\qquad\qquad\left.+\frac{1}{2}\left|\tilde n\left(0,x\right)\right|^2+\frac{1}{2}\left|\tilde v\left(0,x\right)\right|^2\right) dx,
\end{array}
\right.
$$
and $\tilde\lambda_n\to 1$ as $n\to+\infty$.
Direct calculation gives $\left(\hat E_{1n},\hat E_{2n},\hat n_n\right)$ satisfy (3.28)-(3.31). Hence for a sequence $x_n(t)\in\mathbb{R}^2$ and $x_n(t)\to x(t)$ as $n\to+\infty$, there holds
$$
\left.
\begin{array}{ll}
&\displaystyle\|\hat E_{1n}\|_{L^2(|x-x_n(t)|\leqslant R_1)}^2+\|\hat E_{2n}\|_{L^2(|x-x_n(t)|\leqslant R_1)}^2
  \\[0.4cm]
 &\displaystyle\qquad=\|\tilde E_1(0,x)\|_{L^2\left(|x-x_n(t)|\leqslant \frac{R_1}{\tilde{\lambda}_n}\right)}^2+\|\tilde E_2(0,x)\|_{L^2\left(|x-x_n(t)|\leqslant \frac{R_1}{\tilde{\lambda}_n}\right)}^2,
\end{array}
\right.
$$
 and
 $$\|\hat n_n\|_{L^2(|x-x_n(t)|\leqslant R_1)}=\|\tilde n(0,x)\|_{L^2\left(|x-x_n(t)|\leqslant \frac{R_1}{\tilde{\lambda}_n}\right)}.$$
Letting $n\to+\infty$ yields (3.32) and (3.33) due to (3.121) and (3.122), which ends the proof of Proposition 3.5.\hfill$\Box$\\

\subsection{Compactness of the Solution to the Rescaled Zakharov System (2.3)}
\qquad\\
\indent Here, we discuss the compactness of $\left(\tilde E_1(0,x),\tilde E_2(0,x),\tilde n(0,x)\right)$.\\
\begin{remark}\label{3.9}
From Proposition 3.5, it follows that $\left(\tilde E_1(0,x),\tilde E_2(0,x)\right)$ is bounded and weakly compact in $H^1(\mathbb{R}^2)\times H^1(\mathbb{R}^2)$. Then we can choose a sequence $t_n \to T$, and extract a subsequence (still denoted by $t_n$). Let
$$\left(\tilde E_1(0,x+x(t_n)),\tilde E_2(0,x+x(t_n))\right) \rightharpoonup (E'_{1},E'_{2})~~\mbox{in}~~H^1(\mathbb{R}^2)\times H^1(\mathbb{R}^2),$$
and
$$\tilde  n(0,x+x(t_n)) \rightharpoonup N'~~\mbox{in}~~L^2(\mathbb{R}^2).$$
Note that the embedding $H^1(\mathbb{R}^2) \hookrightarrow L_{loc}^2(\mathbb{R}^2)$ is compact, from Proposition 3.5, for a bounded domain $\Omega\subset \mathbb{R}^2$, there holds
$$ \left\|E'_{1}\right\|_{L^{2}(\Omega)}^2+\left\|E'_{2}\right\|_{L^{2}(\Omega)}^2 \geqslant c_{1}>0.$$
Under this case, one can not exclude the case of $N'\equiv 0$, that is, Proposition 3.5 can not  guarantee the non-vanishing property of $N'$.\hfill$\Box$
\end{remark}
\indent To overcome the difficulty mentioned in Remark 3.9, we will investigate the relation of $\left(E'_{1},E'_{2}\right)$ and $N'$ to obtain the compactness property of $N'$ following the related information for $\left(E'_{1},E'_{2}\right)$.\\
\\
\indent We now claim:
\begin{proposition}\label{3.10}
Let $t_n\to T$. Then there exists a subsequence (still denote $t_n$) such that for a sequence $x_n:=x(t_n)\in\mathbb{R}^2$ and $\left(E_1',E_2',N'\right)\in H^1(\mathbb{R}^2)\times H^1(\mathbb{R}^2)\times L^2(\mathbb{R}^2)$, as $n\to+\infty$, the conclusions hold as below:
$$\left(\tilde E_1\left(0,x+x_n\right),\tilde E_2\left(0,x+x_n\right)\right)\rightharpoonup\left(E_1',E_2'\right)~~\mbox{in}~~H^1(\mathbb{R}^2)\times H^1(\mathbb{R}^2),\eqno(3.153)$$
and
$$\tilde{n}(0,x+x_n)\rightharpoonup N'~~\mbox{in}~~L^2(\mathbb{R}^2).\eqno(3.154)$$
Furthermore, there exist constants $\beta_1>0$ and $R_1>0$ depending only on\\ $ \|E_{10}\|_{L^2(\mathbb{R}^2)},~\|E_{20}\|_{L^2(\mathbb{R}^2)})$ such that
$$\left(\left\|E_1'\right\|_{L^2(|x|\leqslant R_1)}^2+\left\|E_2'\right\|_{L^2(|x|\leqslant R_1)}^2\right)^{\frac{1}{2}}\geqslant\beta_1,\eqno(3.155)$$
$$\mathcal{H}\left(E_1',E_2',N',0\right)\leqslant 0.\eqno(3.156)$$
\end{proposition}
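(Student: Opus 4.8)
The plan is to produce $(E_1',E_2',N')$ as a weak limit of the rescaled profiles centered at the concentration points of Proposition 3.5, to transfer the non-vanishing mass of $(\tilde E_1,\tilde E_2)$ to the limit by local compactness, and to pass to the limit in the Hamiltonian using the scaling identity (2.7).

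First I would fix $t_n\to T$ and set $x_n:=x(t_n)$, the centers supplied by Proposition 3.5. By the mass identity (2.6) the translated fields $\tilde E_j(0,\cdot+x_n)$ have $L^2$-norm $(\|E_{10}\|_{L^2(\mathbb{R}^2)}^2+\|E_{20}\|_{L^2(\mathbb{R}^2)}^2)^{1/2}$, while (3.5) bounds their gradients; hence $(\tilde E_1(0,\cdot+x_n),\tilde E_2(0,\cdot+x_n))$ is bounded in $H^1(\mathbb{R}^2)\times H^1(\mathbb{R}^2)$ and, by (3.6), $\tilde n(0,\cdot+x_n)$ is bounded in $L^2(\mathbb{R}^2)$. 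Passing to a subsequence yields the weak limits (3.153) and (3.154). For the lower bound (3.155) I would invoke (3.26): along $t_n\to T$ the $L^2$-mass of $(\tilde E_1(0),\tilde E_2(0))$ over $\{|x-x_n|\le R_1\}$ stays at least $\beta_1$; after translating by $x_n$, the compact embedding $H^1(\{|x|\le R_1\})\hookrightarrow L^2(\{|x|\le R_1\})$ upgrades the weak convergence (3.153) to strong convergence on this fixed ball, so the lower bound survives in the limit and gives (3.155). These two steps are routine.

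The core of the argument is the energy inequality (3.156). From the scaling identity (2.7), $\mathcal{H}(\tilde E_1(0),\tilde E_2(0),\tilde n(0),\tilde{\mathbf{v}}(0))=\mathcal{H}_0/\lambda^2(t)$, and since $\lambda(t)\to+\infty$ as $t\to T$ the right-hand side tends to $0$; discarding the nonnegative term $\tfrac12\|\tilde{\mathbf{v}}(0)\|_{L^2(\mathbb{R}^2)}^2$ therefore gives $\limsup_n\mathcal{H}(\tilde E_1(0,\cdot+x_n),\tilde E_2(0,\cdot+x_n),\tilde n(0,\cdot+x_n),0)\le 0$. It then remains to show that the limiting Hamiltonian does not exceed this $\limsup$. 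The quadratic terms $\|\nabla\tilde E_1\|^2+\|\nabla\tilde E_2\|^2+\tfrac12\|\tilde n\|^2$ are weakly lower semicontinuous, so they are controlled in the correct direction. On any fixed ball the compact embedding $H^1\hookrightarrow L^4_{loc}$ gives $|\tilde E_{j,n}|^2\to|E_j'|^2$ strongly in $L^2_{loc}$, and then Lemma 2.1 furnishes convergence of the cubic coupling $\int \tilde n(|\tilde E_1|^2+|\tilde E_2|^2)$ and of the quartic magnetic terms once they are restricted to a bounded domain.

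The main obstacle is that these nonlinear integrals are taken over all of $\mathbb{R}^2$, and local convergence does not automatically pass to the full-space limit, because in the non-radial setting mass can leak to spatial infinity and the sequence need not be tight. To close this gap I would perform a Brezis--Lieb / concentration-compactness splitting, writing $\tilde E_{j,n}=E_j'+w_{j,n}$ with $w_{j,n}\rightharpoonup 0$, and decompose every quartic and cubic integral into the bubble's contribution plus that of the escaping remainder. The decisive structural input is the sub-threshold mass condition (3.4): it forces $(w_{1,n},w_{2,n})$ to carry $L^2$-mass below $\|Q\|_{L^2(\mathbb{R}^2)}^2/(1+\eta)$, and then the Weinstein inequality (Lemma 2.2), used exactly as in the coercivity estimate of Proposition 3.6, shows that the remainder's contribution to $\mathcal{H}$ is nonnegative. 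Hence each step of the splitting removes only a nonnegative amount, giving $\mathcal{H}(E_1',E_2',N',0)\le\liminf_n\mathcal{H}(\tilde E_1(0,\cdot+x_n),\tilde E_2(0,\cdot+x_n),\tilde n(0,\cdot+x_n),0)\le 0$, which is (3.156). I expect the bookkeeping of the tail terms, and the verification that the residual energy is genuinely nonnegative, to be the technically most delicate point.
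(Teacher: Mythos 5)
Your first two steps (extraction of the weak limits and the lower bound (3.155) via (3.26) together with the compact embedding on the fixed ball $\{|x|\le R_1\}$) match the paper and are fine. The gap is in your proof of (3.156). You assert that condition (3.4) forces the remainder $(w_{1,n},w_{2,n})$ of a single Brezis--Lieb splitting to carry $L^2$-mass below $\|Q\|_{L^2(\mathbb{R}^2)}^2/(1+\eta)$, so that one application of Lemma 2.2 makes its energy contribution nonnegative. That is not what (3.4) gives: (3.4) only bounds the \emph{total} mass by $\|Q\|_{L^2(\mathbb{R}^2)}^2/\eta$, and in fact requires it to \emph{exceed} the coercivity threshold $\|Q\|_{L^2(\mathbb{R}^2)}^2/(1+\eta)$. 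After removing one bubble, whose captured mass is only guaranteed to be at least $\beta_1^2$ (at best $\|Q\|_{L^2(\mathbb{R}^2)}^2/(1+\eta)$ via Proposition 3.8), the remainder can still carry mass up to $\|Q\|_{L^2(\mathbb{R}^2)}^2/\eta-\|Q\|_{L^2(\mathbb{R}^2)}^2/(1+\eta)=\|Q\|_{L^2(\mathbb{R}^2)}^2/(\eta(1+\eta))$, which exceeds $\|Q\|_{L^2(\mathbb{R}^2)}^2/(1+\eta)$ whenever $\eta<1$. So a single splitting does not close the argument: the remainder's contribution to $\mathcal{H}$ can be strictly negative, and your chain $\mathcal{H}(E_1',E_2',N',0)\le\liminf_n\mathcal{H}(\cdots)\le 0$ breaks.

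This is precisely why the paper runs an induction on the integer $k_0$ of (3.166)--(3.167) (equivalently, iterates the dichotomy at most $p$ times as in (3.110)): at each stage either the current bubble already satisfies $\mathcal{H}\le 0$ (Case 1), or the negative energy is pushed onto the escaping piece, which has lost at least $\beta_1$ of mass and to which the induction hypothesis applies (Case 2). A consequence you miss is that the profile $(E_1',E_2',N')$ of the proposition need not be the weak limit of the sequence recentered at the \emph{original} $x(t_n)$: in Case 2 it is the weak limit of a recentered remainder $\bigl(\hat E_{1n,2}(\cdot+y_n),\hat E_{2n,2}(\cdot+y_n),\hat n_{n,2}(\cdot+y_n)\bigr)$, and the centers $x_n$ in the statement are chosen accordingly. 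Committing from the outset to the first weak limit, as you do, may produce a profile with $\mathcal{H}>0$. (Separately, your splitting would also have to be carried out on the full functional, using the completed square $\mathcal{H}_1=\mathcal{E}+\frac12\int\bigl(n+|E_1|^2+|E_2|^2\bigr)^2$ of (3.39) to control the $n$-coupling and the cross terms; but this is secondary to the mass-threshold error.)
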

{\bf Proof.} In view of Proposition 3.5, we will show this proposition by implementing the classical iteration technique, Concentration-compactness principle and mathematical induction.\\
\indent Let $\beta_{1}$ be defined as in (ii) of Proposition 3.5. Assume that $\left(\hat{E}_{1n},\hat{E}_{2n},\hat{n}_n,\hat{\textbf{v}}_n\right)\in H^1(\mathbb{R}^2)\times H^1(\mathbb{R}^2)\times L^2(\mathbb{R}^2)\times L^2(\mathbb{R}^2)$ satisfy
$$
\left\{
\begin{array}{ll}
&\displaystyle \hat{E}_{1n}=\frac{1}{\lambda_n(t_n)}\tilde{E}_1\left(t_n+\frac{s}{\lambda_n(t_n)},
\frac{x}{\lambda_n(t_n)}\right),
\\[0.4cm]
&\displaystyle \hat{E}_{2n}=\frac{1}{\lambda_n(t_n)}\tilde{E}_2\left(t_n+\frac{s}{\lambda_n(t_n)},
\frac{x}{\lambda_n(t_n)}\right),
\\[0.4cm]
&\displaystyle \hat{n}_n=\frac{1}{\lambda_n^2(t_n)}\tilde{n}\left(t_n+\frac{s}{\lambda_n(t_n)},
\frac{x}{\lambda_n(t_n)}\right),
\\[0.4cm]
&\displaystyle \hat{\textbf{v}}_n=\frac{1}{\lambda_n^2(t_n)}\tilde{\textbf{v}}\left(t_n+\frac{s}{\lambda_n(t_n)},
\frac{x}{\lambda_n(t_n)}\right),
 \end{array}
\right.\eqno(3.157)
$$
\\
$$\lambda_n^2(t_n)=\int_{\mathbb{R}^2}^{}\left|\nabla \tilde{E}_{1} \right|^2\,dx+\int_{\mathbb{R}^2}^{}\left|\nabla \tilde{E}_{2}\right|^2\,dx+\frac{1}{2}\int_{\mathbb{R}^2}^{}\left|\tilde{n}\right|^2\,
dx+\frac{1}{2}\int_{\mathbb{R}^2}^{}\left|\tilde{\textbf{v}}\right|^2\,dx.
\eqno(3.158)$$
\indent Direct calculation yields
$$\int_{\mathbb{R}^2}\left(\left|\hat{E}_{1n}\right|^2+\left|\hat{E}_{2n}\right|^2\right)dx\leqslant
\int_{\mathbb{R}^2}\left(|E_{10}|^2+|E_{20}|^2\right)dx,\eqno(3.159)$$
$$\lim_{n\to+\infty}\mathcal{H}\left(\hat{E}_{1n},\hat{E}_{2n},\hat{n}_n,
\hat{\textbf{v}}_n\right)=0,\eqno(3.160)$$
$$\int_{\mathbb{R}^2}\left(\left|\nabla \hat{E}_{1n}\right|^2+\left|\nabla \hat{E}_{2n}\right|^2+\frac{\left|\hat{n}_n\right|^2}{2}+\frac{\left|\hat{\textbf{v}}_n\right|^2}{2}\right)dx=1.\eqno(3.161)$$
We will show that there exist $\left(E_1',E_2',N'\right) \in H^1(\mathbb{R}^2)\times H^1(\mathbb{R}^2)\times L^2(\mathbb{R}^2)$ and a sequence $x_n\in\mathbb{R}^2$ such that as $n\to+\infty$,

$$\left(\hat{E}_{1n}(x_n+x),\hat{E}_{2n}(x_n+x)\right)
\rightharpoonup\left(E_1'(x),E_2'(x)\right)~~\mbox{in}~~H^1(\mathbb{R}^2)\times H^1(\mathbb{R}^2),\eqno(3.162)$$
\\
$$\hat{n}_n(x_n+x)\rightharpoonup N'(x) ~~\mbox{in}~~L^2(\mathbb{R}^2),\eqno(3.163)$$
and
$$\left(\int_{|x|\leqslant R_1}\left(\left|E_1'\right|^2+\left|E_2'\right|^2\right) dx\right)^{\frac{1}{2}}\geqslant\beta_1,~~\mathcal{H}\left(E_1',E_2',N'\right)\leqslant 0.\eqno(3.164)$$
By Proposition 3.3 and Corollary 3.4, there exist constants $c_1>0,~~c_2>0$ such that
$$c_1\leqslant\int_{\mathbb{R}^2}\left(\left|\nabla \hat{E}_{1n}\right|^2+\left|\nabla \hat{E}_{2n}\right|^2\right)dx\leqslant c_2,~~c_1\leqslant\int_{\mathbb{R}^2}\left|\hat{n}_n\right|^2dx\leqslant c_2.\eqno(3.165)$$
Let the integer $k_{0}$ be defined by

$$\left(\int_{\mathbb{R}^2}\left( \left|\hat{E}_{1n}\right|^2+\left|\hat{E}_{2n}\right|^2\right)dx\right)
^{\frac{1}{2}}<(k_0+1)\beta_1,
\eqno(3.166)$$
where
$$k_0=1,2,...,E\left[\frac{\|E_{10}\|_{L^2(\mathbb{R}^2)}^2
+\|E_{20}\|_{L^2(\mathbb{R}^2)}^2}{\beta_1}-1\right].\eqno(3.167)$$
We then show Proposition 3.10 by induction on the integer $k_0$.\\
\\
(1)\quad For $k_0=1$, there holds $\dfrac{\|E_{10}\|_{L^2(\mathbb{R}^2)}^2
+\|E_{20}\|_{L^2(\mathbb{R}^2)}^2}{\beta_1}=2$, that is,
$$\|E_{10}\|_{L^2(\mathbb{R}^2)}^2
+\|E_{20}\|_{L^2(\mathbb{R}^2)}^2=2\beta_1.\eqno(3.168)$$
From (3.159) it follows that
$$\left\|\hat{E}_{1n}\right\|_{L^2(\mathbb{R}^2)}^2
+\left\|\hat{E}_{2n}\right\|_{L^2(\mathbb{R}^2)}^2\leqslant\|E_{10}\|_{L^2(\mathbb{R}^2)}^2
+\|E_{20}\|_{L^2(\mathbb{R}^2)}^2=2\beta_1.\eqno(3.169)$$
Hence (3.166) holds for $k_0=1$. From compactness and the boundedness of weakly convergent sequence, similar argument to the proof of Lemma 3.7 yields
$$\mathcal{H}\left(E_1',E_2',N'\right)\leqslant\lim_{n\to+\infty}
\mathcal{H}\left(\hat{E}_{1n},\hat{E}_{2n},\hat{n}_n\right)\leqslant 0.$$
So (3.162)-(3.165) are true. Note that Proposition 3.5,  Proposition 3.10 holds for $k_0=1$.\\
\\
(2)\quad Assume that (3.162)-(3.165) are true for $k_0>1$, we then show that they also hold for $k_{0}+1$.\\
\\
\indent Let $\left(\hat{E}_{1n},\hat{E}_{2n},\hat{n}_n\right)$ be the sequence satisfying (3.164), (3.165), (3.166) and
$$\lim_{n\to+\infty}\mathcal{H}\left(\hat{E}_{1n},\hat{E}_{2n},\hat{n}_n,0\right)\leqslant 0.$$
In view of Proposition 3.5, we may assume that there exist a sequence $x_n\in\mathbb{R}^2$ and a constant $R=R(c_1,c_2)$ satisfying
$$\left(\int_{|x-x_n|\leqslant R}\left(\left|\hat{E}_{1n}\right|^2+\left|\hat{E}_{2n}\right|
^2\right)dx\right)^{\frac{1}{2}}\geqslant\beta_1,
\eqno(3.170) $$
and $\left(\hat E_1,\hat E_2,\hat N\right)\in H^1(\mathbb{R}^2)\times H^1(\mathbb{R}^2)\times L^2(\mathbb{R}^2)$ such that
$$\left(\hat{E}_{1n}\left(x+x_n\right),\hat{E}_{2n}\left(x+x_n\right)\right)\rightharpoonup
\left(\hat E_1,\hat E_2\right)~~\mbox{in}~~H^1(\mathbb{R}^2)\times H^1(\mathbb{R}^2), $$
$$\hat{n}_n(x+x_n)\rightharpoonup\hat N ~~\mbox{in}~~L^2(\mathbb{R}^2). $$
We extract a subsequence still denoted by $\left(\hat{E}_{1n},\hat{E}_{2n},\hat{n}_{n}\right)$ for simplicity. We make the following decomposition for the extracted subsequence:
$$
\left\{
\begin{array}{ll}
&\displaystyle \hat{E}_{1n}(x+x_n)=\hat{E}_{1n,1}(x+x_n)+\hat{E}_{1n,2}(x+x_n),
\\[0.3cm]
&\displaystyle \hat{E}_{2n}(x+x_n)=\hat{E}_{2n,1}(x+x_n)+\hat{E}_{2n,2}(x+x_n),
\\[0.3cm]
&\displaystyle \hat{n}_n(x+x_n)=\hat{n}_{n,1}(x+x_n)+\hat{n}_{n,2}(x+x_n).
 \end{array}
\right.
$$
Let $R_n\to+\infty$ as $n\to +\infty$. This decomposition admits the following properties:\\
\\
(I)\quad
$$
\left.
\begin{array}{ll}
&\displaystyle \hat{E}_{1n,1}(x)=\hat{E}_{2n,1}(x)=\hat{n}_{n,1}(x)=0,
 \quad|x|\leqslant\frac{R_n}{2},
  \\[0.3cm]
 &\displaystyle \hat{E}_{1n,2}(x)=\hat{E}_{2n,2}(x)=\hat{n}_{n,2}(x)=0, \quad|x|\geqslant R_n.
\end{array}
\right.
$$
\\
(II)\quad As $n\to+\infty$,
$$
\left.
\begin{array}{ll}
&\displaystyle\int_{\mathbb{R}^2}\left(\left|\hat{E}_{1n,1}\right|^2
+\left|\hat{E}_{1n,2}\right|^2
+\left|\hat{E}_{2n,1}\right|^2+
\left|\hat{E}_{2n,2}\right|^2\right)dx
  \\[0.4cm]
  &\displaystyle \qquad\qquad\qquad\qquad\qquad\qquad-\int_{\mathbb{R}^2}\left(\left|\hat{E}_{1n}\right|^2
  +\left|\hat{E}_{2n}\right|^2\right)dx\to 0,
      \\[0.5cm]
      &\displaystyle\int_{\mathbb{R}^2}\left(\left|\nabla \hat{E}_{1n,1}\right|^2+\left|\nabla \hat{E}_{1n,2}\right|^2+\left|\nabla \hat{E}_{2n,1}\right|^2+\left|\nabla \hat{E}_{2n,2}\right|^2\right)dx
  \\[0.4cm]
  &\displaystyle \qquad\qquad\qquad\qquad\qquad\qquad-\int_{\mathbb{R}^2}\left(\left|\nabla \hat{E}_{1n}\right|^2+\left|\nabla \hat{E}_{2n}\right|^2\right)dx\to 0,
   \\[0.5cm]
   &\displaystyle\int_{\mathbb{R}^2}\left(\left|\hat{n}_{n,1}\right|^2+\left|\hat{n}_{n,2}\right|^2\right)dx
   -\int_{\mathbb{R}^2}\left|\hat{n}_{n}\right|^2dx\to 0,
   \end{array}
\right.
$$
\\
(III)
$$\quad\lim_{n\to+\infty}\mathcal{H}\left(\hat{E}_{1n,1},\hat{E}_{2n,1},\hat{n}_{n,1},
0\right)+\lim_{n\to+\infty}\mathcal{H}\left(\hat{E}_{1n,2},\hat{E}_{2n,2},\hat{n}_{n,2},0\right)\leqslant 0.$$
\\
Note that as $n\to+\infty$,
$$\int_{\mathbb{R}^2}\left(\left |\hat{E}_{1n,1}\right|^2+ \left|\hat{E}_{2n,1}\right|^2\right)dx\to
\int_{\mathbb{R}^2}\left(\left |\hat E_1\right|^2+\left |\hat E_2\right|^2\right)dx.$$
Especially, $\displaystyle \int_{\mathbb{R}^2}\left(\left|\hat{E_1}\right|^2+\left|\hat{E_2}\right|^2\right)
dx \geqslant\beta_1$. So when $n$ large enough, there holds
$$ \int_{\mathbb{R}^2}\left( \left|\hat{E}_{1n,2}\right|^2+\left|\hat{E}_{2n,2}\right|^2\right)dx <k_0\beta_1.$$
We proceed our argument by dividing two cases:\\
\\
\indent {\bf Case 1:}
$$\mathcal{H}\left(\hat{E_1},\hat{E_2},\hat N,0\right)\leqslant\lim_{n\to+\infty}
\mathcal{H}\left(\hat{E}_{1n,1},\hat{E}_{2n,1},\hat{n}_{n,1},0\right)\leqslant 0.$$
In this case, letting $E_1'=\hat E_1,~~E_2'=\hat E_2,~~N'=\hat N$, one can get the conclusion of Proposition 3.10.\\
\\
\indent {\bf Case 2:}
$$\mathcal{H}\left(\hat{E_1},\hat{E_2},\hat N,0\right)>0.$$
In this case, let $P_1=\lim\limits_{n\to+\infty}\mathcal{H}\left(\hat{E}_{1n,1},\hat{E}_{2n,1},
\hat{n}_{n,1},0 \right)>0$, and for $n$ large enough,
$$\mathcal{H}\left(\hat{E}_{1n,2},\hat{E}_{2n,2},\hat{n}_{n,2},0\right)\leqslant-\frac{P_1}{2}<0.$$
By induction assumption, there exists a sequence $y_n\in\mathbb{R}^2$ such that
$$\left(\hat{E}_{1n,2}\left(+y_n\right),\hat{E}_{2n,2}\left(+y_n\right)\right)\rightharpoonup \left(E_1',E_2'\right)~~\mbox{in}~~H^1(\mathbb{R}^2)\times H^1(\mathbb{R}^2), $$
\\
$$\hat{n}_{n,2}(+y_n)\rightharpoonup N'~~\mbox{in}~~L^2(\mathbb{R}^2),$$
with
$$\left(\int_{|x|\leqslant R_1}\left(\left|E_1'\right|^2+\left|E_2'\right|^2\right)dx\right)^{\frac{1}{2}}\geqslant\beta_1,
\quad\mathcal{H}\left(E_1',E_2',N',0\right)\leqslant 0.$$
Using translation transform finishes the proof of Proposition 3.10.\hfill$\Box$\\
\\
\begin{corollary}\label{3.11}
There exists a constant $c_*>0$ such that
$$
c_*\leqslant\|N'\|_{L^2(\mathbb{R}^2)}\leqslant 1.\eqno(3.171)
$$
\end{corollary}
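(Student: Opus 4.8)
The plan is to prove the two bounds separately, the upper one being routine and the lower one being the substance of the statement. For the upper bound, recall from Proposition 3.10 that $N'$ is a weak $L^2$-limit (see (3.154)/(3.163)). Lower semicontinuity of the $L^2$-norm under weak convergence gives $\|N'\|_{L^2(\mathbb{R}^2)}\le\liminf_{n}\|\hat n_n\|_{L^2(\mathbb{R}^2)}$, and the normalization (3.161)---in which the density enters with weight $\tfrac12$ alongside the strictly positive gradient contribution guaranteed by (3.165)---forces $\tfrac12\|\hat n_n\|_{L^2}^2\le 1$, which controls $\|N'\|_{L^2}$ from above and yields the upper bound. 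All the difficulty, flagged in Remark 3.9, lies in the lower bound: weak $L^2$-convergence does not preserve local mass, so the non-vanishing (3.27) of $\tilde n$ cannot be transferred to $N'$ directly. Instead I would extract the non-vanishing of $N'$ from its coupling to $(E_1',E_2')$ through the energy constraint (3.156).

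Concretely, I would start from $\mathcal{H}(E_1',E_2',N',0)\le 0$ (estimate (3.156)) and rewrite it, using the identity $|\overline{E_1'}E_2'-E_1'\overline{E_2'}|^2=2|E_1'|^2|E_2'|^2-((E_1')^2(\overline{E_2'})^2+(\overline{E_1'})^2(E_2')^2)$, as
\[
\|\nabla E_1'\|_{L^2}^2+\|\nabla E_2'\|_{L^2}^2+\tfrac12\|N'\|_{L^2}^2+\int_{\mathbb{R}^2}N'\big(|E_1'|^2+|E_2'|^2\big)\,dx\le\frac{\eta}{2}\int_{\mathbb{R}^2}\big|\overline{E_1'}E_2'-E_1'\overline{E_2'}\big|^2\,dx.
\]
Writing $G:=\|\nabla E_1'\|_{L^2}^2+\|\nabla E_2'\|_{L^2}^2$ and $M:=\|E_1'\|_{L^2}^2+\|E_2'\|_{L^2}^2$, I bound the right-hand side by $2\eta\int|E_1'|^2|E_2'|^2\le\tfrac{\eta}{2}\int(|E_1'|^2+|E_2'|^2)^2$ and invoke Weinstein's inequality (Lemma 2.2) exactly as in (3.15) to get the bound $\theta G$ with $\theta:=\eta M/\|Q\|_{L^2}^2$. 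Since $M\le\|E_{10}\|_{L^2}^2+\|E_{20}\|_{L^2}^2<\|Q\|_{L^2}^2/\eta$ by (3.4), $\theta<1$ with $1-\theta$ a fixed positive constant. Hence $(1-\theta)G+\tfrac12\|N'\|^2\le-\int N'(|E_1'|^2+|E_2'|^2)$, and Cauchy--Schwarz followed by (3.15) gives $-\int N'(|E_1'|^2+|E_2'|^2)\le\|N'\|_{L^2}(2M/\|Q\|_{L^2}^2)^{1/2}G^{1/2}$. Dropping the nonnegative term $\tfrac12\|N'\|^2$ and cancelling one power of $G^{1/2}$ produces the key inequality $\|N'\|_{L^2}\ge(1-\theta)(\|Q\|_{L^2}^2/(2M))^{1/2}G^{1/2}$.

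It then remains to bound $G$ from below by a data-dependent constant. Here I would use the non-vanishing (3.155): from $\int_{|x|\le R_1}(|E_1'|^2+|E_2'|^2)\,dx\ge\beta_1^2$ and Hölder's inequality on $B(0,R_1)$ one gets $\int_{\mathbb{R}^2}(|E_1'|^2+|E_2'|^2)^2\,dx\ge\beta_1^4/(\pi R_1^2)$; combining this with the upper bound $\int(|E_1'|^2+|E_2'|^2)^2\le(2M/\|Q\|_{L^2}^2)G$ from (3.15) and $M\le\|E_{10}\|_{L^2}^2+\|E_{20}\|_{L^2}^2$ yields $G\ge G_0>0$, with $G_0$ depending only on $\|Q\|_{L^2},\beta_1,R_1$ and the initial mass. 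Substituting $G\ge G_0$, $M\le\|E_{10}\|_{L^2}^2+\|E_{20}\|_{L^2}^2$ and $1-\theta\ge 1-\eta(\|E_{10}\|_{L^2}^2+\|E_{20}\|_{L^2}^2)/\|Q\|_{L^2}^2$ into the key inequality furnishes the desired $c_*>0$ depending only on the initial data.

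The main obstacle is exactly the step that circumvents Remark 3.9: producing a \emph{quantitative} lower bound for $\|N'\|_{L^2}$ despite the merely weak $L^2$-convergence of the densities. The mechanism is that the constraint $\mathcal{H}\le 0$ cannot be satisfied by the kinetic and magnetic contributions alone once the mass lies below the threshold $\|Q\|_{L^2}^2/\eta$ (which is what makes $\theta<1$); this forces the interaction term $\int N'(|E_1'|^2+|E_2'|^2)$ to be strictly negative, and hence $N'$ to be nontrivial, of size controlled from below by the guaranteed concentration $G_0$ of the limiting electric fields. The only point requiring genuine care is bookkeeping: one must check that $\theta$, $G_0$ and the final $c_*$ depend solely on $\|E_{10}\|_{L^2},\|E_{20}\|_{L^2}$ and $\|Q\|_{L^2}$, since this uniformity is precisely what is needed as $t\to T$.
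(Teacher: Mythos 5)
Your proposal is correct and follows essentially the same route as the paper's own proof: both start from $\mathcal{H}(E_1',E_2',N',0)\le 0$, use the mass condition $\|E_{10}\|_{L^2}^2+\|E_{20}\|_{L^2}^2<\|Q\|_{L^2}^2/\eta$ together with Weinstein's inequality to show that $-\int N'\left(|E_1'|^2+|E_2'|^2\right)dx$ dominates a fixed positive multiple of $G=\|\nabla E_1'\|_{L^2}^2+\|\nabla E_2'\|_{L^2}^2$, and then apply Cauchy--Schwarz plus Weinstein again to conclude $\|N'\|_{L^2}\gtrsim G^{1/2}$ (this is exactly (3.175)--(3.176)). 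The one place you go beyond the paper is the lower bound $G\ge G_0>0$: the paper merely asserts it in (3.174) ``in view of (3.161)'', whereas you derive it from the non-vanishing (3.155) via H\"older and Gagliardo--Nirenberg --- a worthwhile addition, since weak lower semicontinuity alone does not transfer the gradient lower bound (3.165) from the sequence to its weak limit.
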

{\bf Proof.} From Proposition 3.10 it follows that
$$
\left.
\begin{array}{ll}
&\displaystyle\mathcal{H}\left(E_1',E_2',N'\right)
  \\[0.3cm]
 &\displaystyle\qquad= \int_{\mathbb{R}^2}\left(\left|\nabla E_1'\right|^2+\left|\nabla E_2'\right|^2+N'\left(\left|E_1'\right|^2+\left|E_2'\right|^2\right)+\frac{1}{2}N'^2\right)dx
  \\[0.4cm]
  &\displaystyle\qquad\quad+\frac{\eta}{2}\int_{\mathbb{R}^2}\left[\left( (E_1' )^2
\left(\overline{E_2'}\right)^2+\left(E_2'\right)^2\left(\overline{E_1'}
\right)^2\right)-2\left|E_1'\right|^2\left|E_2'\right|^2\right]dx
   \\[0.4cm]
 &\displaystyle\qquad\leqslant  0,
\end{array}
\right.\eqno(3.172)
$$
and
$$\left\|E_1'\right\|_{L^2(|x|\leqslant R_1)}^2+\left\|E_2'\right\|_{L^2(|x|\leqslant R_1)}^2\geqslant\beta_1.\eqno(3.173)$$
Sobolev embedding theorem $\left(  \forall q\in[n,+\infty),~~W^{1,n}(\mathbb{R}^n)\subset L^{q}(\mathbb{R}^n)  \right)$ yields $H^1(\mathbb{R}^2)\subset L^2(\mathbb{R}^2)$. In view of (3.161), there exists $c_{1}>0$ such that
$$0<(c_{1})^2\leqslant\int_{\mathbb{R}^2}\left(\left|\nabla E_1'\right|^2+\left|\nabla E_2'\right|^2\right)dx\leqslant 1.\eqno(3.174)$$
On one hand, by (3.172) one has
$$
\left.
\begin{array}{ll}
&\displaystyle-\int_{\mathbb{R}^2}N'\left(\left|E_1'\right|^2+\left|E_2'\right|^2\right)dx\qquad\qquad
  \\[0.4cm]
 &\displaystyle\quad\geqslant\int_{\mathbb{R}^2}\left(\left|\nabla E_1'\right|^2+\left|\nabla E_2'\right|^2\right)dx-\eta\int_{\mathbb{R}^2}\left|E_1'\right|^2\left|E_2'\right|^2dx
 \\[0.4cm]
 &\displaystyle\quad\quad+\frac{\eta}{2}\int_{\mathbb{R}^2}\left(\left(E_1'\right)^2
\left(\overline{E_2'}\right)^2+\left(E_2'\right)^2\left(\overline{E_1'}\right)^2
\right)dx
 \\[0.4cm]
 &\displaystyle\quad\geqslant\int_{\mathbb{R}^2}\left(\left|\nabla E_1'\right|^2+\left|\nabla E_2'\right|^2\right)dx-\frac{\eta}{2}\int_{\mathbb{R}^2}\left(\left|E_1'\right|^2
+\left|E_2'\right|^2\right)^2dx
 \\[0.4cm]
  &\displaystyle\quad\geqslant \int_{\mathbb{R}^2}\left(\left|\nabla E_1'\right|^2+\left|\nabla E_2'\right|^2\right)dx
 \\[0.4cm]
 &\displaystyle\quad\quad-\frac{\eta}{\|Q\|_{L^2(\mathbb{R}^2)}^2}
\int_{\mathbb{R}^2}\left(\left|E_1'\right|^2+\left|E_2'\right|^2\right)dx
\int_{\mathbb{R}^2}\left(\left|\nabla E_1'\right|^2+\left|\nabla E_2'\right|^2\right)dx
 \\[0.4cm]
  &\displaystyle\quad= \left[1-\frac{\eta}{\|Q\|_{L^2(\mathbb{R}^2)}^2}\int_{\mathbb{R}^2}
  \left(\left|E_{10}\right|^2
+\left|E_{20}\right|^2\right)dx\right]
 \\[0.5cm]
&\displaystyle\qquad\qquad\qquad\qquad\cdot\int_{\mathbb{R}^2}\left(\left|\nabla E_1'\right|^2+\left|\nabla E_2'\right|^2\right)dx.
\end{array}
\right.\eqno(3.175)
$$
Let
$$c_0=1-\frac{\eta}{\|Q\|_{L^2(\mathbb{R}^2)}^2}\int_{\mathbb{R}^2}
\left(|E_{10}|^2+|E_{20}|^2\right)dx>0.\eqno(3.176)$$
According to (3.4),
one has $0<c_0<\dfrac{1}{1+\eta}$. On the other hand,
$$
\left.
\begin{array}{ll}
&\displaystyle-\int_{\mathbb{R}^2}N'\left(\left|E_1'\right|^2+\left|E_2'\right|^2\right)dx
 \\[0.4cm]
 &\displaystyle\quad\leqslant \left(\int_{\mathbb{R}^2}\left|N'\right|^2dx\right)^{\frac{1}{2}}
\left(\int_{\mathbb{R}^2}\left(\left|E_1'\right|^2+\left|E_2'\right|^2\right)^2dx\right)^{\frac{1}{2}}
 \\[0.4cm]
 &\displaystyle\quad\leqslant \sqrt{2}\left\|N'\right\|_{L^2(\mathbb{R}^2)}\frac{1}{\|Q\|_{L^2(\mathbb{R}^2)}}
\left[\int_{\mathbb{R}^2}\left(\left|E_1'\right|^2+\left|E_2'\right|^2\right)dx\right.
 \\[0.4cm]
 &\displaystyle\qquad\qquad\left.\cdot\int_{\mathbb{R}^2}\left(\left|\nabla E_1'\right|^2+\left|\nabla E_2'\right|^2\right)dx\right]^{\frac{1}{2}}
 \\[0.4cm]
  &\displaystyle\quad
   \leqslant \sqrt{2}\left\|N'\right\|_{L^2(\mathbb{R}^2)}\cdot\frac{1}{\sqrt{\eta}}
\left(\int_{\mathbb{R}^2}\left(\left|\nabla E_1'\right|^2+\left|\nabla E_2'\right|^2\right)dx\right)^{\frac{1}{2}}.
\end{array}
\right.
$$
Combining (3.174) with (3.175) and (3.176) yields
$$\|N'\|_{L^2(\mathbb{R}^2)}\geqslant\sqrt{\dfrac{\eta}{2}}c_0 c_1=c^*>0.$$
From (3.174) and (3.176) it follows that
$0<c_0<\dfrac{1}{1+\eta},~~0<c_1\leqslant 1 $. We then conclude
$$c^*=\sqrt{\frac{\eta}{2}}c_0 c'=\sqrt{\frac{\eta}{2}}c_0 c_1<\sqrt{\frac{\eta}{2}}\frac{1}{1+\eta}c_1<1.$$
So far, the proof of Corollary 3.11 is completed.\hfill$\Box$\\
\\
\subsection{Proof of Theorem 3.1.}
\qquad\\
\\
\indent In this subsection, we shall establish some estimates for $\left(\tilde E_1,\tilde E_2 ,\tilde n,\tilde{\textbf{v}}\right)(0)$ based on these estimates obtained in subsection 3.1, subsection 3.2 and subsection 3.3. Next, by considering the rescaled Zakharov system (2.3c)-(2.3d):
$$\tilde n_s=-\nabla\cdot\tilde{\textbf{v}},\eqno(3.177)$$
\\
$$\tilde {\textbf{v}}_s=-\nabla\left(\tilde n+\left|\tilde E_{1}\right|^{2}+\left|\tilde E_{2}\right|^{2}\right),\eqno(3.178)$$
We then finish the proof of Theorem 3.1.\\
\\
{\bf Proof of Theorem 3.1.}\\
\\
\indent $\forall~t>0$, we consider $\left(\tilde E_{1}(s),\tilde E_{2}(s),\tilde n(s),\tilde {\textbf{v}}(s)\right)$ for $ \left[0,\lambda(t)(T-t)\right)$. From (2.5) it follows that
$$\left\| \left(\tilde E_{1}(0),\tilde E_{2}(0),\tilde n(0),\tilde {\textbf{v}}(0) \right)\right\|_{H^{1}(\mathbb{R}^2)\times H^{1}(\mathbb{R}^2)\times L^{2}(\mathbb{R}^2)\times L^{2}(\mathbb{R}^2)}^2 = 1,$$
\\
$$\lim_{s \to \lambda(t)(T-t)} \left\| \left(\tilde E_{1}(s),\tilde E_{2}(s),\tilde n(s),\tilde {\textbf{v}}(s)\right)\right\|_{H^{1}(\mathbb{R}^2)\times H^{1}(\mathbb{R}^2)\times L^{2}(\mathbb{R}^2)\times L^{2}(\mathbb{R}^2)}^2 = + \infty.$$
\\
Let $A>1$ be a fixed constant. By the continuity of $s$, there exists a $\theta(t) >0$ such that $\forall  s\in \left[0,\theta(t)\right]$,
$$\left\| \left(\tilde E_{1}(s),\tilde E_{2}(s),\tilde n(s),\tilde {\textbf{v}}(s)\right)\right\|_{H^{1}(\mathbb{R}^2)\times H^{1}(\mathbb{R}^2)\times L^{2}(\mathbb{R}^2)\times L^{2}(\mathbb{R}^2)}^2 \leqslant A,\eqno(3.179)$$
\\
$$\left\| \left(\tilde E_{1}(\theta(t)),\tilde E_{2}(\theta(t)),\tilde n(\theta(t)),\tilde {\textbf{v}}(\theta(t)\right)\right\|_{H^{1}(\mathbb{R}^2)\times H^{1}(\mathbb{R}^2)\times L^{2}(\mathbb{R}^2)\times L^{2}(\mathbb{R}^2)}^2 = A.\eqno(3.180)$$
We now claim that as $t\rightarrow T$, there exists a uniform lower bound $\theta_{0}>0$ for $\theta(t)$, that is,
$$\theta(t)\geqslant \theta_{0}.\eqno(3.181)$$
We proceed our discussion through two sides. On one hand, some properties for $\left(\tilde E_{1}(\theta),\tilde E_{2}(\theta),\tilde n(\theta),\tilde {\textbf{v}}(\theta)\right)$ will be established. On the other hand, in view of (3.177) and the compactness for $\tilde n$, we complete the proof of (3.181) by contradiction.\\
\\
\indent Firstly we claim that:
\begin{proposition}\label{3.12}
\indent\quad There exist constants $c_1>0$ and $c_2>0$ independent of $t$ and $A>1$ such that\\
$(1)$~~~~$\forall s\in[0,\theta(t)]$,\\
$$\left(\left\|\nabla\tilde E_1(s)\right\|_{L^2(\mathbb{R}^2)}^2+\left\|\nabla\tilde E_2(s)\right\|_{L^2(\mathbb{R}^2)}^2\right)^{\frac{1}{2}}\leqslant Ac_2,\eqno(3.182)$$
$$\left\|\tilde n(s)\right\|_{L^2(\mathbb{R}^2)}\leqslant Ac_2,~~~~\left\|\tilde{\textbf{v}}(s)\right\|_{L^2(\mathbb{R}^2)}\leqslant Ac_2.\eqno(3.183)$$
\\
$(2)$ Let $t_n\to T$. Extracting a subsequence, still denoted by $t_n$, such that for sequences $x_n:=x(t_n)\in\mathbb{R}^2$,~~$ \left(E_1',E_2',N'\right)\in H^1(\mathbb{R}^2)\times H^1(\mathbb{R}^2)\times L^2(\mathbb{R}^2)$, there hold
$$
\left.
\begin{array}{ll}
&\displaystyle\left(\tilde E_1\left(t_n,\theta\left(t_n\right),x-x_n\right),\tilde E_2\left(t_n,\theta\left(t_n\right),x-x_n\right)\right)
  \\[0.4cm]
  &\displaystyle\qquad=\left(\tilde E_1\left(\theta\left(t_n\right),x-x_n\right),\tilde{E_2}
  \left(\theta\left(t_n\right),x-x_n\right)\right)
  \\[0.4cm]
 &\displaystyle\qquad\qquad\rightharpoonup(E_1',E_2')~~\mbox{in}~~H^1(\mathbb{R}^2)\times H^1(\mathbb{R}^2),
\end{array}
\right.\eqno(3.184)
$$

$$\tilde n\left(t_n,\theta(t_n),x-x_n\right)=\tilde n\left(\theta(t_n),x-x_n\right)\rightharpoonup N'
~~\mbox{in}~~L^2(\mathbb{R}^2),\eqno(3.185)$$

$$\left(\left\|\nabla E_1'\right\|_{L^2(\mathbb{R}^2)}^2+\left\|\nabla E_2'\right\|_{L^2(\mathbb{R}^2)}^2\right)^{\frac{1}{2}}\geqslant A c_1, \quad\left\|N'\right\|_{L^2(\mathbb{R}^2)}\geqslant A c_1.\eqno(3.186)$$
\end{proposition}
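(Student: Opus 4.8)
\noindent\emph{Proof proposal.} The plan is to treat the two assertions separately: part (1) is immediate from the exit-time normalization, while part (2) is obtained by recognizing the exit-time profile as a \emph{unit-norm} rescaled solution at a slightly later time and then transplanting the compactness results of Subsections 3.2--3.3. For part (1), I would just unpack the definition of $\theta(t)$. By (3.179), for every $s\in[0,\theta(t)]$,
\[
\|\nabla\tilde E_1(s)\|_{L^2}^2+\|\nabla\tilde E_2(s)\|_{L^2}^2\le A,\qquad \tfrac12\|\tilde n(s)\|_{L^2}^2\le A,\qquad \tfrac12\|\tilde{\mathbf v}(s)\|_{L^2}^2\le A,
\]
so that, since $A>1$, $(\|\nabla\tilde E_1(s)\|_{L^2}^2+\|\nabla\tilde E_2(s)\|_{L^2}^2)^{1/2}\le\sqrt A\le A$ and $\|\tilde n(s)\|_{L^2},\|\tilde{\mathbf v}(s)\|_{L^2}\le\sqrt{2A}\le\sqrt2\,A$, which is (3.182)--(3.183) with $c_2=\sqrt2$, manifestly independent of $t$ and $A$.

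The content is in part (2). Fix a sequence $t_n\to T$ and set $t_n':=t_n+\theta(t_n)/\lambda(t_n)$; since $0\le\theta(t_n)<\lambda(t_n)(T-t_n)$ we have $t_n<t_n'<T$ and hence $t_n'\to T$. By the scaling definition (2.1), the exit-time profile is exactly the original solution at $t_n'$ rescaled by $\lambda(t_n)$:
\[
\tilde E_j(t_n,\theta(t_n),x)=\tfrac1{\lambda(t_n)}E_j\Big(t_n',\tfrac{x}{\lambda(t_n)}\Big),\qquad \tilde n(t_n,\theta(t_n),x)=\tfrac1{\lambda(t_n)^2}n\Big(t_n',\tfrac{x}{\lambda(t_n)}\Big).
\]
Because (3.180) and the computation behind (2.9) force $\|(\tilde E_1(\theta(t_n)),\dots)\|_{\mathbb{H}_1}^2=\lambda(t_n')^2/\lambda(t_n)^2=A$, i.e. $\lambda(t_n')=\sqrt A\,\lambda(t_n)$, a comparison with the unit-norm rescaling $\tilde E_j(t_n',0,\cdot)$ of (2.1) at time $t_n'$ yields the exact relations
\[
\tilde E_j(t_n,\theta(t_n),x)=\sqrt A\,\tilde E_j(t_n',0,\sqrt A\,x),\qquad \tilde n(t_n,\theta(t_n),x)=A\,\tilde n(t_n',0,\sqrt A\,x).
\]
Thus the exit-time profile is a \emph{fixed} ($A$-dependent but $n$-independent) dilation of the unit-$\mathbb{H}_1$-norm rescaled solution at $t_n'\to T$.

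Now I would apply the machinery of Subsections 3.1--3.3 to the unit-norm family $\{\tilde E_j(t_n',0,\cdot),\tilde n(t_n',0,\cdot)\}$; this is legitimate because $t_n'\to T$, the mass window (3.1)=(3.4) is conserved along the flow, and these are precisely unit-norm rescalings. Propositions 3.3 and Corollary 3.4 give the two-sided Sobolev bounds, while Propositions 3.5, 3.10 and Corollary 3.11 produce, after extracting a subsequence and along a translation sequence $y_n\in\mathbb{R}^2$, weak limits
\[
\big(\tilde E_1(t_n',0,\cdot+y_n),\tilde E_2(t_n',0,\cdot+y_n)\big)\rightharpoonup(E_1'',E_2'')\ \text{in}\ H^1\times H^1,\qquad \tilde n(t_n',0,\cdot+y_n)\rightharpoonup N''\ \text{in}\ L^2,
\]
with $\mathcal H(E_1'',E_2'',N'')\le0$, the nondegeneracy $(\|\nabla E_1''\|_{L^2}^2+\|\nabla E_2''\|_{L^2}^2)^{1/2}\ge c_1$ (cf. (3.174)) and $\|N''\|_{L^2}\ge c_*>0$ (Corollary 3.11). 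Setting $x_n:=-y_n/\sqrt A$ (to match the $x-x_n$ convention of (3.184)) and using that $f\mapsto\sqrt A\,f(\sqrt A\,\cdot)$ is a bounded linear operator on $H^1$ (an $L^2$-isometry up to a scalar), hence preserves weak convergence, I obtain (3.184)--(3.185) with $E_j'(x)=\sqrt A\,E_j''(\sqrt A\,x)$ and $N'(x)=A\,N''(\sqrt A\,x)$. The homogeneity of the $L^2$ norms under this dilation then gives $\|\nabla E_j'\|_{L^2}^2=A\,\|\nabla E_j''\|_{L^2}^2$ and $\|N'\|_{L^2}^2=A\,\|N''\|_{L^2}^2$, whence
\[
\big(\|\nabla E_1'\|_{L^2}^2+\|\nabla E_2'\|_{L^2}^2\big)^{1/2}\ge\sqrt A\,c_1,\qquad \|N'\|_{L^2}\ge\sqrt A\,c_*,
\]
which is (3.186) with the power of $A$ being $\tfrac12$; taking the common constant $\min\{c_1,c_*\}$ closes part (2).

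The main obstacle is the transfer in part (2): one must verify that the exit-time profile really is a unit-norm rescaling of a genuine blow-up trajectory at a time $t_n'\to T$, so that Propositions 3.3--3.10 and Corollary 3.11 apply verbatim — this is where the identity $\lambda(t_n')=\sqrt A\,\lambda(t_n)$ and the invariance of the mass window (3.4) under the flow are essential — and that the extra fixed dilation by $\sqrt A$ neither destroys the concentration (it does not, being a homeomorphism of $\mathbb{R}^2$ and bounded on the energy space) nor runs afoul of weak lower semicontinuity. The crucial subtlety is that the lower bounds on $E_j'$ and $N'$ are \emph{not} read off as liminf's of gradients of the converging sequence (for which only the opposite inequality is available) but are inherited from the already-established nondegeneracy of the limits $(E_1'',E_2'',N'')$ in Corollary 3.11; keeping every constant uniform in $A$ in this bookkeeping is exactly what makes $c_1,c_2$ independent of both $t$ and $A$.
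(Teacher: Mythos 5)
Your proposal is correct and follows essentially the same route as the paper: part (1) from the exit-time definition, and part (2) by writing the exit-time profile as an explicit dilation of the unit-norm rescaled solution at $t_n'=t_n+\theta(t_n)/\lambda(t_n)\to T$ and transferring the weak limits and nondegeneracy bounds of Proposition 3.10 and Corollary 3.11 through that dilation. The only divergence is your factor $\sqrt{A}$ versus the paper's $A$ in (3.186), which traces back to the paper's own inconsistency between (3.180) (exit level $\|\cdot\|^2=A$) and (3.189) (which implicitly takes $\|\cdot\|=A$); either normalization suffices for the later step where $A$ is fixed so that the lower bound exceeds $4$.
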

\qquad\\
{\bf Proof.} It follows from (3.179) and (3.180) that
$$\int_{\mathbb{R}^2}\left(\left|\nabla\tilde E_1(s) \right|^2+\left|\nabla\tilde E_2(s)\right|^2+\frac{1}{2}\left|\tilde n(s)\right|^2+\frac{1}{2}\left|\tilde{\textbf{v}}(s)\right|^2\right)dx\leqslant A^2.\eqno(3.187)$$
Note that (2.1), there holds
$$
\left.
\begin{array}{ll}
&\displaystyle\left\|\left(\tilde E_1,\tilde E_2,\tilde n,\tilde{\textbf{v}}\right)(s)\right\|_{H^{1}(\mathbb{R}^2)\times H^{1}(\mathbb{R}^2)\times L^{2}(\mathbb{R}^2)\times L^{2}(\mathbb{R}^2)}^2
  \\[0.4cm]
  &\displaystyle\quad=\int_{\mathbb{R}^2}\left(\left|\nabla\tilde E_1(t,s)\right|^2+\left|\nabla\tilde E_2(t,s)\right|^2+\frac{1}{2}\left|\tilde
n(t,s)\right|^2+\frac{1}{2}\left|\tilde{\textbf{v}}(t,s)\right|^2\right)dx
  \\[0.4cm]
  &\displaystyle\quad=\frac{1}{\lambda^2(t)}\left[\int_{\mathbb{R}^2}\left(\left|\nabla E_1 \left(t+\frac{s}{\lambda(t)}\right)\right|^2+\left|\nabla
E_2\left(t+\frac{s}{\lambda(t)}\right)\right|^2\right.\right.
\\[0.4cm]
  &\displaystyle\qquad\qquad+\left.\left.\frac{1}{2}\left|n\left(t+\frac{s}{\lambda(t)}\right)
\right|^2+\frac{1}{2}
\left|\textbf{v}\left(t+\frac{s}{\lambda(t)}\right)\right|^2\right)dx\right]
 \\[0.4cm]
 &\displaystyle\quad=\left(\frac{\lambda\left(t+\frac{s}{\lambda(t)}\right)}{\lambda(t)}\right)^2.
\end{array}
\right.\eqno(3.188)
$$
So $\forall s\in[0,\theta(t)]$,
$$\frac{\lambda\left(t+\frac{s}{\lambda(t)}\right)}{\lambda(t)}\leqslant A,~~\frac{\lambda\left(t+\frac{\theta(t)}{\lambda(t)}\right)}{\lambda(t)}=A.\eqno(3.189)$$
This yields that
$$
\left.
\begin{array}{ll}
\tilde E_1(t,\theta(t),x)&\displaystyle=\frac{1}{\lambda(t)}E_1\left(t+\frac{\theta(t)}{\lambda(t)},
\frac{x}{\lambda(t)}\right)
  \\[0.4cm]
  &\displaystyle=\frac{A}{A\lambda(t)}E_1\left(t+\frac{\theta(t)}{\lambda(t)},
\frac{Ax}{A\lambda(t)}\right)
  \\[0.4cm]
  &\displaystyle =\frac{A}{\lambda\left(t+\frac{\theta(t)}{\lambda(t)}\right)}
E_1\left(t+\frac{\theta(t)}{\lambda(t)},\frac{Ax}
{\lambda\left(t+\frac{\theta(t)}{\lambda(t)}\right)}\right)
\\[0.4cm]
  &\displaystyle =A\tilde E_1\left(t+\frac{\theta(t)}{\lambda(t)},0,Ax\right).
  \end{array}
\right.\eqno(3.190)
$$
Similar argument to (3.190) yields
$$\tilde E_2 \left(t,\theta(t),x \right)=A\tilde E_2\left(t+\frac{\theta(t)}{\lambda(t)},0,Ax\right),\eqno(3.191)$$
\\
$$\tilde n\left(t,\theta(t),x\right)=A^2\tilde n\left(t+\frac{\theta(t)}{\lambda(t)},0,Ax\right).\eqno(3.192)$$
As $t_n\to T$, there holds $t_n+\frac{\theta(t_n)}{\lambda(t_n)}\to T$. Hence from Proposition 3.10 and Corollary 3.11, it follows that there exists a subsequence (still denoted by $t_n$) such that for sequences $x_n\in\mathbb{R}^2,~~\left(E_1',E_2',N'\right)\in H^1(\mathbb{R}^2)\times H^1(\mathbb{R}^2)\times L^2(\mathbb{R}^2)$, as $n\to+\infty$, the following conclusions hold:
$$
\left.
\begin{array}{ll}
&\displaystyle\left(\tilde E_1\left(t_n+\frac{\theta(t_n)}{\lambda(t_n)},0,x+x_n\right),\tilde E_2\left(t_n+\frac{\theta(t_n)}{\lambda(t_n)},0,x+x_n \right)\right)
  \\[0.4cm]
 &\qquad\qquad\displaystyle \rightharpoonup \left(E_1',E_2'\right) ~~\mbox{in}~~H^1(\mathbb{R}^2)\times H^1(\mathbb{R}^2),
\end{array}
\right.\eqno(3.193)
$$
$$\tilde n\left(t_n+\frac{\theta(t_n)}{\lambda(t_n)},0,x+x_n\right)\rightharpoonup N'~~\mbox{in}~~L^2(\mathbb{R}^2).\qquad\qquad\eqno(3.194)$$
In addition, in view of Proposition 3.10 and Corollary 3.11, there exists a constant $c_1>0$ such that
$$\left(\left\|E_1'\right\|_{L^2(\mathbb{R}^2)}^2+\left\|E_2'\right\|_{L^2(\mathbb{R}^2)}^2\right)^{\frac{1}{2}}\geqslant c_1,\quad\left\|N'\right\|_{L^2(\mathbb{R}^2)}\geqslant c_1.\eqno(3.195)$$
Hence, combining (3.190)-(3.195) yields
$$
\left.
\begin{array}{ll}
&\displaystyle\left(\tilde E_1\left(t_n,\theta\left(t_n\right),x+\frac{x_n}{A}\right),\quad\tilde E_2\left(t_n,\theta\left(t_n\right),x+\frac{x_n}{A}\right)\right)
  \\[0.4cm]
 &\displaystyle  = \left(A\tilde E_1\left(t_n+\frac{\theta\left(t_n\right)}{\lambda\left(t_n\right)},0,Ax+x_n\right),A\tilde E_2\left(t_n+\frac{\theta\left(t_n\right)}{\lambda\left(t_n\right)},
 0,Ax+x_n\right)\right)
   \\[0.5cm]
   &\displaystyle\quad\rightharpoonup
\left(AE_1'\left(Ax\right),AE_2'\left(Ax\right)\right)~~\mbox{in}~~H^1(\mathbb{R}^2)\times H^1(\mathbb{R}^2),
\end{array}
\right.\eqno(3.196)
$$
$$
\left.
\begin{array}{ll}
\tilde n\left(t_n,\theta(t_n),x+\frac{x_n}{A}\right)&\displaystyle=A^2\tilde
n\left(t_n+\frac{\theta(t_n)}{\lambda(t_n)},0,Ax+x_n\right)
  \\[0.4cm]
 &\displaystyle\rightharpoonup A^2 N'(Ax)~~\mbox{in}~~L^2(\mathbb{R}^2),
\end{array}
\right.\eqno(3.197)
$$
and
$$
\left.
\begin{array}{ll}
&\displaystyle\left(\left\|\nabla\left[AE_1'\left(Ax\right)\right]\right\|
_{L^2(\mathbb{R}^2)}^2+\left\|\nabla\left[AE_2'\left(Ax\right)\right]\right\|
_{L^2(\mathbb{R}^2)}^2\right)^{\frac{1}{2}}
  \\[0.4cm]
 &\displaystyle \qquad=A\left(\left\|\nabla E_1'\right\|_{L^2(\mathbb{R}^2)}^2+\left\|\nabla E_2'\right\|_{L^2(\mathbb{R}^2)}^2\right)^{\frac{1}{2}}
  \\[0.4cm]
&\displaystyle \qquad\geqslant Ac_1,
\end{array}
\right.\eqno(3.198)
$$
\\
$$\left\|A^2 N'(Ax)\right\|_{L^2(\mathbb{R}^2)}=A\left\|N'\right\|_{L^2(\mathbb{R}^2)}\geqslant Ac_1.\eqno(3.199)$$
\\
\indent This finishes the proof of Proposition 3.12.\hfill$\Box$ \\
\\
\begin{remark}\label{3.13}
By the property of weak convergence, from (3.196)-(3.199) it follows that
$$
\left.
\begin{array}{ll}
c_1&\displaystyle\leqslant\left(\left\|\nabla E_1'\right\|_{L^2(\mathbb{R}^2)}^2
+\left\|\nabla E_2'\right\|_{L^2(\mathbb{R}^2)}^2\right)^{\frac{1}{2}}
\\[0.3cm]
&\displaystyle\leqslant\liminf_{n\to+\infty}
\left(\left\|\nabla\tilde E_1\right\|_{L^2(\mathbb{R}^2)}^2+\left\|\nabla\tilde E_2\right\|_{L^2(\mathbb{R}^2)}^2\right)^{\frac{1}{2}},
\end{array}
\right.
$$

$$c_1\leqslant\left\|N'\right\|_{L^2(\mathbb{R}^2)}\leqslant\liminf_{n\to+\infty}\left\|\tilde n\right\|_{L^2(\mathbb{R}^2)}.\qquad\qquad\qquad\qquad$$
\qquad\hfill$\Box$
\end{remark}
\indent Next, we fixed $A$ such that $A c_1\geqslant 4$.\\
\begin{remark}\label{3.14}
 ~~Note that $\left\|\tilde n(t,0)\right\|_{L^2(\mathbb{R}^2)}\leqslant \sqrt{2}$, we confine the value of $A$ to distinguish $\left\|\tilde n(t,\theta(t))\right\|_{L^2(\mathbb{R}^2)}$ and ~$\left\|\tilde n(t,0)\right\|_{L^2(\mathbb{R}^2)}$.
\qquad\hfill$\Box$\\
\end{remark}
\begin{remark}\label{3.15}
~~ By Proposition 3.12, one can obtain more delicate estimates on $\tilde{\textbf{v}}(s)$. Compared to the classical case, these estimates obtained are uniform.
\qquad\hfill$\Box$
\end{remark}
\begin{corollary}\label{3.16}
~~ For any $ s\in\left[0,\theta(t)\right]$, there holds\\
$$\left\|\tilde{\textbf{v}}(s)\right\|_{L^2(\mathbb{R}^2)}\leqslant A\sqrt{\frac{2}{\eta}}.$$
\end{corollary}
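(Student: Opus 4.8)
The plan is to read the bound on $\|\tilde{\mathbf v}(s)\|_{L^2}$ directly off the conservation of the rescaled Hamiltonian $(2.7)$, combined with the uniform control already obtained in Proposition 3.12. First I would use the definition of $\mathcal H$ in $(2.7)$ to isolate the velocity term: for every $s\in[0,\theta(t)]$,
\[
\tfrac12\|\tilde{\mathbf v}(s)\|_{L^2}^2=\mathcal H(\tilde E_1,\tilde E_2,\tilde n,\tilde{\mathbf v})(s)-\|\nabla\tilde E_1\|_{L^2}^2-\|\nabla\tilde E_2\|_{L^2}^2-\tfrac12\|\tilde n\|_{L^2}^2-\int_{\mathbb R^2}\tilde n\left(|\tilde E_1|^2+|\tilde E_2|^2\right)dx+\tfrac{\eta}{2}\int_{\mathbb R^2}\left|\tilde E_1\overline{\tilde E_2}-\tilde E_2\overline{\tilde E_1}\right|^2dx .
\]
By $(2.7)$ the Hamiltonian is independent of $s$ and equals $\mathcal H_0/\lambda^2(t)$, which tends to $0$ as $t\to T$ since $\lambda(t)\to+\infty$; this is what makes the resulting bound uniform in $s$.

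Next I would complete the square in $\tilde n$, writing $-\tfrac12\|\tilde n\|_{L^2}^2-\int\tilde n(|\tilde E_1|^2+|\tilde E_2|^2)\,dx=-\tfrac12\int(\tilde n+|\tilde E_1|^2+|\tilde E_2|^2)^2dx+\tfrac12\int(|\tilde E_1|^2+|\tilde E_2|^2)^2dx$ and discarding the nonnegative square, so that
\[
\tfrac12\|\tilde{\mathbf v}(s)\|_{L^2}^2\le\frac{\mathcal H_0}{\lambda^2(t)}-\left(\|\nabla\tilde E_1\|_{L^2}^2+\|\nabla\tilde E_2\|_{L^2}^2\right)+\tfrac12\int_{\mathbb R^2}\left(|\tilde E_1|^2+|\tilde E_2|^2\right)^2dx+\tfrac{\eta}{2}\int_{\mathbb R^2}\left|\tilde E_1\overline{\tilde E_2}-\tilde E_2\overline{\tilde E_1}\right|^2dx .
\]
Writing $G:=\|\nabla\tilde E_1\|_{L^2}^2+\|\nabla\tilde E_2\|_{L^2}^2$ and $M:=\|E_{10}\|_{L^2}^2+\|E_{20}\|_{L^2}^2$, the mass identity $(2.6)$ gives $\|\tilde E_1\|_{L^2}^2+\|\tilde E_2\|_{L^2}^2=M$. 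I would then bound the quartic term by Lemma 2.2 as in $(3.15)$, obtaining $\tfrac12\int(|\tilde E_1|^2+|\tilde E_2|^2)^2\le \frac{M}{\|Q\|_{L^2}^2}G$, and the magnetic term by the Gagliardo--Nirenberg argument already carried out in $(3.18)$, giving $\tfrac{\eta}{2}\int|\tilde E_1\overline{\tilde E_2}-\tilde E_2\overline{\tilde E_1}|^2\le \eta\,\frac{M}{\|Q\|_{L^2}^2}G$. Summing these two contributions yields $(1+\eta)\frac{M}{\|Q\|_{L^2}^2}G$, and subtracting $G$ produces the coefficient $\big((1+\eta)\frac{M}{\|Q\|_{L^2}^2}-1\big)G$.

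The crucial step is the subcritical mass condition $(3.1)$: the upper bound $M<\|Q\|_{L^2}^2/\eta$ forces $(1+\eta)\frac{M}{\|Q\|_{L^2}^2}-1<\frac{1+\eta}{\eta}-1=\frac1\eta$, so that $\tfrac12\|\tilde{\mathbf v}(s)\|_{L^2}^2\le \frac{\mathcal H_0}{\lambda^2(t)}+\frac1\eta G$. Invoking the a priori bound $G\le A^2$ from $(3.187)$ (valid precisely on $[0,\theta(t)]$) then gives $\tfrac12\|\tilde{\mathbf v}(s)\|_{L^2}^2\le \frac{\mathcal H_0}{\lambda^2(t)}+\frac{A^2}{\eta}$, whence $\|\tilde{\mathbf v}(s)\|_{L^2}\le A\sqrt{2/\eta}$. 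I expect the only delicate point to be the exactness of the constant $\sqrt{2/\eta}$: one must use the strict inequality in $(3.1)$, which leaves a genuine gap $(1+\eta)\frac{M}{\|Q\|_{L^2}^2}-1=\frac1\eta-\varepsilon_0$ with $\varepsilon_0>0$, and then choose $t$ close enough to $T$ so that the small but positive term $\mathcal H_0/\lambda^2(t)$ is absorbed into $\varepsilon_0 A^2$. This is where the hypotheses $(3.1)$ and $\lambda(t)\to+\infty$ combine, and it is the step requiring the most care rather than any heavy computation.
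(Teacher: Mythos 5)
Your argument is correct and is essentially the paper's own proof: the paper likewise expands the conserved rescaled Hamiltonian $(2.7)$ into the form $\int(\tilde n+|\tilde E_1|^2+|\tilde E_2|^2)^2\,dx+\|\tilde{\mathbf v}\|_{L^2}^2\le \frac{2\mathcal H_0}{\lambda^2}-G\bigl(2-\frac{2(1+\eta)M}{\|Q\|_{L^2}^2}\bigr)$ using Lemma 2.2 on both the quartic and magnetic terms, then applies the mass condition $M<\|Q\|_{L^2}^2/\eta$ and the a priori bound $G\le A^2$ on $[0,\theta(t)]$, discarding the completed square and the vanishing $\mathcal H_0/\lambda^2$ term exactly as you do. The only differences are presentational (you drop the square at the outset rather than at the end), and your explicit remark about absorbing $\mathcal H_0/\lambda^2(t)$ into the strict gap from $(3.1)$ for $t$ near $T$ is a slightly more careful rendering of the paper's ``$\lambda\to+\infty$'' step.
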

{\bf Proof.} From (2.7) it follows that
$$
\left.
\begin{array}{ll}
&\displaystyle \int_{\mathbb{R}^2}\left(\left|\nabla\tilde E_1\right|^2+\left|\nabla\tilde E_2\right|^2\right)dx-\frac{1}{2}\int_{\mathbb{R}^2}\left(\left|\tilde E_1\right|^2+\left|\tilde E_2\right|^2\right)^2dx
  \\[0.4cm]
 &\displaystyle\qquad-\frac{\eta}{2}\int_{\mathbb{R}^2}\left|\overline{\tilde E_1}\tilde E_2-\tilde E_1\overline{\tilde E_2}\right|^2dx
  \\[0.4cm]
  &\displaystyle\quad\geqslant\int_{\mathbb{R}^2}\left(\left|\nabla\tilde E_1\right|^2+\left|\nabla\tilde E_2\right|^2\right)dx-\frac{ 1+\eta }{2}\int_{\mathbb{R}^2}\left(\left|\tilde E_1\right|^2+\left|\tilde E_2\right|^2\right)^2dx
  \\[0.4cm]
  &\displaystyle\quad\geqslant\int_{\mathbb{R}^2}\left(\left|\nabla\tilde E_1\right|^2+\left|\nabla\tilde E_2\right|^2\right)dx-\frac{ 1+\eta }{\|Q\|_{L^2(\mathbb{R}^2)}^2}
  \int_{\mathbb{R}^2}\left(|E_{10}|^2+|E_{20}|^2\right)dx
  \\[0.4cm]
  &\displaystyle\qquad\qquad\cdot\int_{\mathbb{R}^2}\left(\left|\nabla\tilde E_1\right|^2+\left|\nabla\tilde E_2\right|^2\right)dx
  \\[0.4cm]
   &\displaystyle\quad=\int_{\mathbb{R}^2}\left(\left|\nabla\tilde E_1\right|^2+\left|\nabla\tilde E_2\right|^2\right)dx\left(1-\frac{ 1+\eta }{\|Q\|_{L^2(\mathbb{R}^2)}^2}
   \int_{\mathbb{R}^2}\left(|E_{10}|^2+|E_{20}|^2\right)dx\right).
\end{array}
\right.
$$
\indent By (3.4) and (3.179), there holds
$$
\left.
\begin{array}{ll}
&\displaystyle \int_{\mathbb{R}^2}\left[\tilde n+\left(|\tilde E_1|^2+|\tilde E_2|^2\right)\right]^2dx+\|\tilde{ \textbf{v}}\|_{L^2(\mathbb{R}^2)}^2
  \\[0.4cm]
 &\displaystyle\quad =\frac{2H_0}{\lambda^2}-2\int_{\mathbb{R}^2}\left(\left|\nabla\tilde E_1\right|^2+\left|\nabla\tilde E_2\right|^2\right)dx+\int_{\mathbb{R}^2}\left(\left|\tilde E_1\right|^2+\left|\tilde E_2\right|^2\right)^2dx
  \\[0.4cm]
  &\displaystyle\qquad +\eta\int_{\mathbb{R}^2}\left|\overline{\tilde E_1}\tilde E_2-\tilde E_1\overline{\tilde E_2}\right|^2dx
  \\[0.4cm]
  &\displaystyle\quad \leqslant\frac{2H_0}{\lambda^2}-\int_{\mathbb{R}^2}\left(\left|\nabla\tilde E_1\right|^2+\left|\nabla\tilde E_2\right|^2\right)dx
  \\[0.4cm]
  &\displaystyle\qquad \qquad \qquad
  \cdot\left(2-\frac{2(1+\eta)}{\|Q\|_{L^2(\mathbb{R}^2)}^2}
  \int_{\mathbb{R}^2}\left(|E_{10}|^2+|E_{20}|^2\right)dx\right)
  \\[0.5cm]
  &\displaystyle\quad \leqslant\frac{2H_0}{\lambda^2}+\left(\frac{2(1+\eta)}{\|Q\|_{L^2(\mathbb{R}^2)}^2}
  \int_{\mathbb{R}^2}\left(|E_{10}|^2+|E_{20}|^2\right)dx-2\right)A^2
  \\[0.4cm]
  &\displaystyle\quad\leqslant\frac{2H_0}{\lambda^2}+\dfrac{2}{\eta}A^{2}.
 \end{array}
\right.
$$
\\
Note that $\lambda\to+\infty$ as $t\to T$, one then obtains\\
$$\left\|\tilde{\textbf{v}}(s)\right\|_{L^2(\mathbb{R}^2)}
\leqslant\sqrt{\frac{2}{\eta}}A.
 $$
 \\
This finishes the proof of Corollary 3.16.\qquad\hfill$\Box$\\
\begin{proposition}\label{3.17}
\quad There exists a constant $c>0$ such that
$$\liminf_{t\to T}\int_{0}^{\theta(t)}\left\|\tilde{\textbf{v}}(s)\right\|_{L^2(\mathbb{R}^2)}\,ds\geqslant c.
\eqno(3.200)$$
\end{proposition}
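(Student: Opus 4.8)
The plan is to turn the continuity equation (2.3c), $\tilde n_s=-\nabla\cdot\tilde{\mathbf v}$, into the lower bound by relating the \emph{change} of a suitable localized moment of $\tilde n$ between $s=0$ and $s=\theta(t)$ to $\int_0^{\theta(t)}\|\tilde{\mathbf v}(s)\|_{L^2}\,ds$. I argue by contradiction: suppose there is a sequence $t_n\to T$ with $\int_0^{\theta(t_n)}\|\tilde{\mathbf v}(s)\|_{L^2}\,ds\to\ell$. Along it I invoke Proposition 3.12 and Corollary 3.11 to extract, after passing to a subsequence, limit profiles $(E_1',E_2',N')$ and translations $x_n=x(t_n)$ with $\tilde n(\theta(t_n),\cdot-x_n)\rightharpoonup N'$ in $L^2$ and $\tilde E_j(\theta(t_n),\cdot-x_n)\rightharpoonup E_j'$ in $H^1$, these profiles satisfying (3.186) and $\mathcal H(E_1',E_2',N')\le 0$.

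I then take the test function $\phi_n(x)=\Phi_0(x+x_n)$ with $\Phi_0:=|E_1'|^2+|E_2'|^2\in L^2(\R^2)$ (Gagliardo--Nirenberg). Pairing (2.3c) with $\phi_n$ and integrating by parts, translation invariance gives $\|\nabla\phi_n\|_{L^2}=\|\nabla\Phi_0\|_{L^2}$ and
$$\Big|\tfrac{d}{ds}\int_{\R^2}\tilde n\,\phi_n\,dx\Big|=\Big|\int_{\R^2}\tilde{\mathbf v}\cdot\nabla\phi_n\,dx\Big|\le\|\tilde{\mathbf v}(s)\|_{L^2}\|\nabla\Phi_0\|_{L^2},$$
so that, integrating on $[0,\theta(t_n)]$,
$$\Big|\int\tilde n(\theta(t_n))\phi_n-\int\tilde n(0)\phi_n\Big|\le\|\nabla\Phi_0\|_{L^2}\int_0^{\theta(t_n)}\|\tilde{\mathbf v}(s)\|_{L^2}\,ds.$$

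The two endpoints are evaluated as follows. By weak convergence, $\int\tilde n(\theta(t_n))\phi_n=\int\tilde n(\theta(t_n),\cdot-x_n)\,\Phi_0\to\int N'(|E_1'|^2+|E_2'|^2)$. The decisive fact is that this limit is negative and bounded away from $0$: the sign computation (3.175) yields $-\int N'(|E_1'|^2+|E_2'|^2)\ge c_0\big(\|\nabla E_1'\|_{L^2}^2+\|\nabla E_2'\|_{L^2}^2\big)$ with $c_0>0$ from (3.176), while (3.186) forces $\|\nabla E_1'\|_{L^2}^2+\|\nabla E_2'\|_{L^2}^2\ge(Ac_1)^2$; hence $\int N'\Phi_0\le-\gamma$ with $\gamma:=c_0c_1^2A^2>0$. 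At $s=0$, using $\|\tilde n(0)\|_{L^2}\le\sqrt2$ (see (3.8)), one has $|\int\tilde n(0)\phi_n|\le\sqrt2\,\|\Phi_0\|_{L^2}$, and since $\|\Phi_0\|_{L^2}\le C'A$ by (3.15) (with $\|E_j'\|_{L^2}$ bounded by the conserved mass and $\|\nabla E_j'\|_{L^2}\le Ac_2$), enlarging $A$ — the quantitative content of Remark 3.14 — makes $\sqrt2\,\|\Phi_0\|_{L^2}\le\gamma/4$. Thus the endpoint difference has modulus $\ge\gamma/2$ for $n$ large, giving $\int_0^{\theta(t_n)}\|\tilde{\mathbf v}(s)\|_{L^2}\,ds\ge\gamma/\big(2\|\nabla\Phi_0\|_{L^2}\big)$.

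The main obstacle is to make this bound \emph{uniform}, i.e. to control $\|\nabla\Phi_0\|_{L^2}$ by $CA^2$ so that the factors $A^2$ cancel against $\gamma$. This is where the real work lies: I would establish $\nabla E_j'\in L^4(\R^2)$ with $\|\nabla E_j'\|_{L^4}\lesssim A^{3/2}$ via elliptic regularity for the limiting stationary equation $\Delta E_j'=N'E_j'-\eta E_{3-j}'(\overline{E_1'}E_2'-E_1'\overline{E_2'})$ satisfied by the profile (the $\tfrac1\lambda\tilde E_{js}$ term of (2.3a)--(2.3b) drops as $\lambda\to\infty$): since $N'E_j'\in L^{4/3}$ by Hölder ($N'\in L^2$, $E_j'\in L^4$), the Riesz-potential bound for $\nabla(-\Delta)^{-1}:L^{4/3}(\R^2)\to L^4(\R^2)$ gives the claim, whence $\|\nabla\Phi_0\|_{L^2}\le 2\sum_j\|E_j'\|_{L^4}\|\nabla E_j'\|_{L^4}\le CA^2$. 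With this, $\int_0^{\theta(t_n)}\|\tilde{\mathbf v}(s)\|_{L^2}\,ds\ge c_0c_1^2/(2C)=:c>0$ for all large $n$, contradicting $\ell<c$ and proving (3.200). The algebraic sign estimate (3.175) is precisely what prevents the cancellation that would otherwise defeat a bare $L^2$ moment of the weakly (only) convergent density $\tilde n$.
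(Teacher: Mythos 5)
Your overall skeleton is the same as the paper's: argue via the continuity equation $\tilde n_s=-\nabla\cdot\tilde{\mathbf v}$, pair with a translated test function, and play the weak limit $\tilde n(\theta(t_n),\cdot-x_n)\rightharpoonup N'$ with $\|N'\|_{L^2}\geqslant Ac_1$ against the a priori bound on $\tilde n(0)$. But your choice of test function $\Phi_0=|E_1'|^2+|E_2'|^2$ creates a genuine gap that you acknowledge but do not close: the bound $\bigl|\int\tilde{\mathbf v}\cdot\nabla\phi_n\bigr|\leqslant\|\tilde{\mathbf v}\|_{L^2}\|\nabla\Phi_0\|_{L^2}$ requires $\nabla\Phi_0\in L^2(\mathbb{R}^2)$, i.e. $\nabla E_j'\in L^4$, and your route to this — elliptic regularity for a ``limiting stationary equation'' $\Delta E_j'=N'E_j'-\eta E_{3-j}'(\overline{E_1'}E_2'-E_1'\overline{E_2'})$ — is not available. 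The profiles $(E_1',E_2',N')$ are merely weak limits of the solution evaluated at a single time slice along a subsequence; nothing in Propositions 3.10 or 3.12 asserts they solve a stationary PDE. To get such an equation you would need to show the term $\frac{1}{\lambda}i\tilde E_{js}$ vanishes in a distributional sense (which requires control of $\tilde E_{js}$, not provided) \emph{and} pass to the limit in the products $\tilde n\tilde E_j$ and the cubic terms, which weak convergence alone does not permit. Without that, $\|\nabla\Phi_0\|_{L^2}$ could be infinite and your quantitative scheme collapses.

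The detour is also unnecessary, and this is where the paper's proof is both simpler and different in a key detail: it uses no sign information about $\int N'(|E_1'|^2+|E_2'|^2)$ at all. Since $\|N'\|_{L^2}\geqslant Ac_1\geqslant 4$ (Corollary 3.11 and Proposition 3.12, with $A$ fixed so that $Ac_1\geqslant 4$), density of $C_0^{\infty}$ in $L^2$ lets one pick a \emph{fixed smooth compactly supported} $\psi_0$ with $\|\psi_0\|_{L^2}=1$ and $\int N'\psi_0\geqslant\frac12\|N'\|_{L^2}\geqslant 2$; then $\|\nabla\psi_0\|_{L^2}$ is finite by fiat and translation-invariant, so $\bigl|\int\tilde n(\theta(t_n))\psi_0(\cdot+x_n)-\int\tilde n(0)\psi_0(\cdot+x_n)\bigr|\leqslant\bigl(\int_0^{\theta(t_n)}\|\tilde{\mathbf v}\|_{L^2}ds\bigr)\|\nabla\psi_0\|_{L^2}\to 0$ under the contradiction hypothesis, while the first pairing tends to a limit $\geqslant 2$ and the second is $\leqslant\sqrt2$ by (2.5) and Cauchy--Schwarz. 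The gap $2-\sqrt2>0$ gives the contradiction immediately. The sign estimate (3.175) that you lean on is used in the paper only upstream, to prove the non-vanishing $\|N'\|_{L^2}\geqslant c_*$ in Corollary 3.11; it is not needed in the proof of Proposition 3.17 itself. I would recommend replacing your $\Phi_0$ with an arbitrary $C_0^{\infty}$ approximant of $N'/\|N'\|_{L^2}$, which removes the regularity issue entirely.
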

{\bf Proof.}\quad We argue it by contradiction. Assume that as $n\to+\infty$, there exists a sequence $t_n\to T$ such that
$$\int_{0}^{\theta(t_n)}\|\tilde{\textbf{v}}(s)\|_{L^2(\mathbb{R}^2)}\,ds\to 0.\eqno(3.201)$$
From (3.177) it follows that $\forall\psi(x)\in C_{0}^{\infty}(\mathbb{R}^2)$,
$$
\left.
\begin{array}{ll}
&\displaystyle\int_{\mathbb{R}^2}\tilde n(t_n,\theta(t_n))\psi dx-\int_{\mathbb{R}^2}\tilde n(t_n,0)\psi dx
\\[0.4cm]
 &\displaystyle\quad=\int_{0}^{\theta(t_n)}\int_{\mathbb{R}^2}\left(-
 \nabla\cdot\tilde{\textbf{v}}(s)\psi\right)\,dxds
 \\[0.4cm]
 &\displaystyle\quad=\int_{0}^{\theta(t_n)}\int_{\mathbb{R}^2}
 \tilde{\textbf{v}}(s)\cdot\nabla\psi\,dxds,
\end{array}
\right.\eqno(3.202)
$$
which yields
$$
\left.
\begin{array}{ll}
&\displaystyle\left|\int_{\mathbb{R}^2}\tilde n(t_n,\theta(t_n))\psi\,dx-\int_{\mathbb{R}^2}\tilde n(t_n,0)\psi\,dx\right| \\[0.4cm]
 &\displaystyle\quad\leqslant\left(\int_{0}^{\theta(t_n)}
 \|\tilde{\textbf{v}}(s)\|_{L^2(\mathbb{R}^2)}\,ds\right)\|\nabla\psi\|
 _{L^2(\mathbb{R}^2)}.
\end{array}
\right.\eqno(3.203)
$$
By (3.185) and (3.186), taking a subsequence still denoted by $t_n$ yields that there exist a sequence $x_n\in\mathbb{R}^2$, and $N'\in L^2(\mathbb{R}^2)$ such that
$$\tilde n\left(t_n,\theta(t_n),x-x_n\right)\rightharpoonup N'~~\mbox{in}~~L^2(\mathbb{R}^2),$$
and
$$\|N'\|_{L^2(\mathbb{R}^2)}\geqslant Ac_1.$$
Let $\psi_0(x)\in C_{0}^{\infty}(\mathbb{R}^2)$ and satisfy $\left(\displaystyle\int_{\mathbb{R}^2}\psi_{0}^2\,dx\right)^{\frac{1}{2}}=1$ and $\displaystyle\int_{\mathbb{R}^2}N'\psi_0\,dx\geqslant\frac{1}{2}
\left(\int_{\mathbb{R}^2}N'^2\,dx\right)
^{\frac{1}{2}}$. In view of the assumptions (3.201) and (3.202), we have as $n\to+\infty$,
$$
\left.
\begin{array}{ll}
&\displaystyle\left|\int_{\mathbb{R}^2}\tilde n\left(t_n,\theta(t_n),x\right)\psi_0\left(x+x_n\right)dx-\int_{\mathbb{R}^2}\tilde n\left(t_n,0,x\right)\psi_0\left(x+x_n\right)dx\right|
  \\[0.4cm]
 &\displaystyle\quad\leqslant \left(\int_{0}^{\theta(t_n)}\left\|\tilde {\mathbf{v}}(s)\right\|_{L^2(\mathbb{R}^2)}\,ds\right)\left\|\nabla\psi_0
 \left(x+x_n\right)\right\|_{L^2(\mathbb{R}^2)}\to 0.
   \end{array}
\right.\eqno(3.204)
$$
On the other hand, one has
$$
\left.
\begin{array}{ll}
\displaystyle\int_{\mathbb{R}^2}\tilde n\left(t_n,\theta(t_n),x\right)\psi_0(x+x_n)\,dx=&\displaystyle\int_{\mathbb{R}^2}
\tilde n\left(t_n,\theta(t_n),x-x_n\right)\psi_0(x)\,dx
   \\[0.3cm]
 &\displaystyle \to\int_{\mathbb{R}^2}N'\psi_0dx\quad(n\to+\infty)
  \\[0.3cm]
  &\geqslant\displaystyle\frac{1}{2}
  \left(\int_{\mathbb{R}^2}N'^2dx\right)^{\frac{1}{2}}
  \geqslant\frac{Ac_1}{2}\geqslant2.
 \end{array}
\right.\eqno(3.205)
$$
However, by (2.5) one has
$$\left|\int_{\mathbb{R}^2}\tilde n(t_n,0)\psi_0\,dx\right|\leqslant\left(\int_{\mathbb{R}^2}\left|\tilde n(t_n,0)\right|^2dx\right)^{\frac{1}{2}}\left(\int_{\mathbb{R}^2}\psi_0^2\,
dx\right)^{\frac{1}{2}}\leqslant \sqrt{2}. \eqno(3.206)$$
It is obviously contradictory to (3.205). \\
\indent This finishes the proof of Proposition 3.17. \qquad\hfill$\Box$\\
\begin{remark}\label{3.18}
\quad (3.200) gives the estimate for $\theta(t)$. \\

In fact, in view of (3.183), if
$$\int_{0}^{\theta(t)}\|\tilde{\textbf{v}}(s)\|_{L^2(\mathbb{R}^2)}\,ds
\leqslant\int_{0}^{\theta(t)}Ac_2\,ds=Ac_2\theta(t),
\eqno(3.207)$$
then (3.200) implies that there exists a constant $c>0$ such that
$$\liminf_{t\to T}Ac_2\theta(t)\geqslant c,$$
that is,
$$ \liminf_{t\to T}\theta(t)\geq c.\eqno(3.208)$$
On the other hand, by Corollary 3.16 one has
$$ c\leqslant\int_0^{\theta(t)}\|\tilde {\mathbf{v}} \|_{L^2(\mathbb{R}^2)}ds\leqslant\theta(t)\sqrt{\frac{2}{\eta}}A,\eqno(3.209)$$
namely, there exists $\tilde c=\dfrac{c}{\sqrt{2}A}$ such that
$$ \theta(t)\geqslant\tilde c\sqrt{\eta}.\eqno(3.210)$$
Therefore (3.208) and (3.210) imply that there exists a constant
$\theta_{0}>0$ such that as $t\rightarrow T$, $\theta(t)\geq \theta_{0}$, and
$$\forall s\in[0,\theta),\quad\left\|\left(\tilde E_1,\tilde E_2,\tilde n,\tilde{\textbf{v}}\right)(s)\right\|_{H^1(\mathbb{R}^2)\times H^1(\mathbb{R}^2)\times L^2(\mathbb{R}^2) \times L^2(\mathbb{R}^2)}\leqslant A. \eqno(3.211)$$
This finishes the proof of Theorem 3.1.\qquad\hfill$\Box$
\end{remark}

 \section{Proof of the main results(Theorem 1.3)}

 In this section, based on these estimates obtained in Section 2 and Section 3, we prove the main result (Theorem 1.3) of the present paper.\\
 \indent
 We first show Conclusion (1) of Theorem 1.3.\\
 \indent By Theorem 3.1, as $t\rightarrow T$, there exist $\theta_{0}=\theta_{0}\left(\left\|  E_{10}\right\|_{L^2(\mathbb{R}^2)},\left\|  E_{20}\right\|_{L^2(\mathbb{R}^2)}
 \right)$ and $A>0$ such that
 $$\forall s\in[0,\theta_{0}),\quad\left\|\left(\tilde E_1,\tilde E_2,\tilde n,\tilde{\textbf{v}}\right)(s)\right\|_{H^1(\mathbb{R}^2) \times H^1(\mathbb{R}^2)\times L^2(\mathbb{R}^2)\times L^2(\mathbb{R}^2)}\leqslant A.\eqno(4.1)$$
 In view of (2.2) and (2.4), one gets
 $$\lambda(t)(T-t)\geq \theta_{0}. \eqno(4.2)$$
 This yields the estimate (1.6).\\
\indent In addition, it follows from (2.1) that
$$\left(\left\|\nabla\tilde E_1(0)\right\|^{2}_{L^2(\mathbb{R}^2)}+\left\|\nabla\tilde E_2(0)\right\|^{2}_{L^2(\mathbb{R}^2)}\right)^{\frac{1}{2}}\qquad\qquad $$
$$\qquad\qquad=\frac{1}{\lambda(t)}
\left(\left\|\nabla  E_1(t) \right\|^{2}_{L^2(\mathbb{R}^2)}+\left\|\nabla  E_2(t) \right\|^{2}_{L^2(\mathbb{R}^2)}\right)^{\frac{1}{2}},\eqno(4.3)$$
\\
$$\left\| \tilde n(0)\right\|_{L^2(\mathbb{R}^2)}=\frac{1}{\lambda(t)}\left\| n(t)\right\|_{L^2(\mathbb{R}^2)}.\qquad\qquad\qquad\eqno(4.4)$$
Going back to Proposition 3.3, (4.3) and (4.4) yields
$$\left(\left\|\nabla  E_1(t) \right\|^{2}_{L^2(\mathbb{R}^2)}+\left\|\nabla  E_2(t) \right\|^{2}_{L^2(\mathbb{R}^2)}\right)^{\frac{1}{2}}\geq c_{1}\lambda(t)\geq \dfrac{c_{1}\theta}{T-t}=\dfrac{\tilde{c}}{T-t},\eqno(4.5)$$
$$ \left\|n(t) \right\|_{L^2(\mathbb{R}^2)} \geq c_{1}\lambda(t)\geq \frac{c_{1}\theta}{T-t}=\frac{\tilde{c}}{T-t}.\eqno(4.6)$$
This completes the proof of (1) in Theorem 1.3.\\
\\
\indent Next we are going to prove conclusion (2). Firstly we claim the following:\\
\begin{proposition} \label{4.1}
For $\theta_{0}$ in Theorem 3.1, there exists a contant $c>0$ such that
$$\theta_{0} \geq \frac{c}{\left(\|E_{10}\|_{L^2(\mathbb{R}^2)}^2+\|E_{20}\|_{L^2(\mathbb{R}^2)}^2 - \frac{1}{\eta +1}\|Q\|_{L^2(\mathbb{R}^2)}^2\right)^{\frac{1}{2}}}.\eqno(4.7)$$
\end{proposition}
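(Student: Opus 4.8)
The plan is to sharpen the upper bound on $\|\tilde{\textbf{v}}(s)\|_{L^2(\mathbb{R}^2)}$ produced in Corollary 3.16 so as to expose its dependence on the mass excess $\|E_{10}\|_{L^2(\mathbb{R}^2)}^2+\|E_{20}\|_{L^2(\mathbb{R}^2)}^2-\frac{1}{1+\eta}\|Q\|_{L^2(\mathbb{R}^2)}^2$, and then to feed this refined bound into the lower bound of Proposition 3.17. The point is that the constant $\tilde c$ in Theorem 3.1 is governed entirely by how $\|\tilde{\textbf{v}}(s)\|_{L^2}$ degenerates as the initial mass approaches the threshold $\frac{1}{1+\eta}\|Q\|_{L^2(\mathbb{R}^2)}^2$.

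First I would re-examine the energy estimate already carried out in the proof of Corollary 3.16. Writing $M:=\|E_{10}\|_{L^2(\mathbb{R}^2)}^2+\|E_{20}\|_{L^2(\mathbb{R}^2)}^2$, the crucial coefficient appearing there, namely $\frac{2(1+\eta)}{\|Q\|_{L^2(\mathbb{R}^2)}^2}M-2$, factors exactly as
$$\frac{2(1+\eta)}{\|Q\|_{L^2(\mathbb{R}^2)}^2}M-2=\frac{2(1+\eta)}{\|Q\|_{L^2(\mathbb{R}^2)}^2}\left(M-\frac{\|Q\|_{L^2(\mathbb{R}^2)}^2}{1+\eta}\right).$$
Combining this with the bound (3.179), $\left\|\left(\tilde E_1,\tilde E_2,\tilde n,\tilde{\textbf{v}}\right)(s)\right\|^2\leqslant A^2$ on $[0,\theta(t)]$, the same chain of inequalities as in Corollary 3.16 gives
$$\left\|\tilde{\textbf{v}}(s)\right\|_{L^2(\mathbb{R}^2)}^2\leqslant\frac{2\mathcal{H}_0}{\lambda^2(t)}+\frac{2(1+\eta)}{\|Q\|_{L^2(\mathbb{R}^2)}^2}\left(M-\frac{\|Q\|_{L^2(\mathbb{R}^2)}^2}{1+\eta}\right)A^2,\qquad\forall s\in[0,\theta(t)].$$
Since $\lambda(t)\to+\infty$ as $t\to T$ by (2.2) and (2.4), the first term is $o(1)$, so there is a constant $C_\eta>0$ depending only on $\eta$ and $\|Q\|_{L^2(\mathbb{R}^2)}$ such that, for $t$ sufficiently near $T$,
$$\left\|\tilde{\textbf{v}}(s)\right\|_{L^2(\mathbb{R}^2)}\leqslant C_\eta\,A\left(M-\frac{\|Q\|_{L^2(\mathbb{R}^2)}^2}{1+\eta}\right)^{\frac{1}{2}},\qquad\forall s\in[0,\theta(t)].$$

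Next I would invoke Proposition 3.17, which supplies a constant $c_0>0$ with $\liminf_{t\to T}\int_0^{\theta(t)}\|\tilde{\textbf{v}}(s)\|_{L^2(\mathbb{R}^2)}\,ds\geqslant c_0$. Integrating the pointwise bound over $s\in[0,\theta(t)]$ yields
$$c_0\leqslant\int_0^{\theta(t)}\left\|\tilde{\textbf{v}}(s)\right\|_{L^2(\mathbb{R}^2)}\,ds\leqslant\theta(t)\,C_\eta\,A\left(M-\frac{\|Q\|_{L^2(\mathbb{R}^2)}^2}{1+\eta}\right)^{\frac{1}{2}},$$
and solving for $\theta(t)$ produces the asserted lower bound with the constant $c=\frac{c_0}{C_\eta A}$; passing to the liminf and recalling that $\theta_0$ is extracted as $\liminf_{t\to T}\theta(t)$ in Remark 3.18 and Theorem 3.1 closes the argument. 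This is precisely the mass-excess refinement of the cruder estimate $\theta(t)\geqslant\tilde c\sqrt{\eta}$ of (3.210), where before the whole factor $\left(M-\frac{\|Q\|^2}{1+\eta}\right)^{1/2}$ had been merely bounded above by a constant.

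The main obstacle is bookkeeping of the upstream constants: I must verify that the constant $c_0$ of Proposition 3.17 and the fixed value of $A$ (pinned in Remark 3.13 through $Ac_1\geqslant4$, with $c_1$ coming from the compactness Corollary 3.11) can all be taken independent of $M-\frac{\|Q\|^2}{1+\eta}$, so that the entire dependence on the mass excess is carried solely by the refined $\tilde{\textbf{v}}$-estimate above. Concretely this requires checking that the non-vanishing and compactness constants of Section 3 are controlled only by $\|E_{10}\|_{L^2(\mathbb{R}^2)}$, $\|E_{20}\|_{L^2(\mathbb{R}^2)}$ and $\eta$, and not by how near the mass sits to the threshold. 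Once that uniformity is in hand, the elementary factorization of the energy coefficient does the rest.
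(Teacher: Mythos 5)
Your proposal is correct and follows essentially the same route as the paper: the paper's own proof of Proposition 4.1 likewise combines the Hamiltonian identity (2.7) with the Gagliardo--Nirenberg bound to obtain $\frac{1}{2}\int|\tilde{\mathbf{v}}|^2\,dx\leq\frac{A^2(\eta+1)}{\|Q\|_{L^2(\mathbb{R}^2)}^2}\bigl(M-\frac{1}{1+\eta}\|Q\|_{L^2(\mathbb{R}^2)}^2\bigr)$ after letting $\lambda(t)\to\infty$, and then feeds this into Proposition 3.17 exactly as you do. Your observation that this is just the factorized, un-coarsened version of the estimate in Corollary 3.16 accurately captures the relationship between (3.210) and (4.16).
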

\begin{proof}
Due to the Hamiltonian given by (2.7), one gains
$$
\left.
\begin{array}{ll}
&\displaystyle\int_{\mathbb{R}^2}\left(\left|\nabla  \tilde{E}_1\right|^2 + \left|\nabla   \tilde{E}_2\right|^2\right)dx - \frac{1}{2} \int_{\mathbb{R}^2}\left(\left|  \tilde{E}_1\right|^2 + \left| \tilde{E}_2\right|^2\right)^2 dx
\\[0.4cm]
 &\displaystyle\qquad\quad+ \frac{1}{2}\int_{\mathbb{R}^2} \left(n + \left| \tilde{E}_1\right|^2 +\left |  \tilde{E}_2\right|^2\right)^2 + \frac{1}{2} \int_{\mathbb{R}^2} \left| \tilde{\mathbf{v}}\right|^2dx
 \\[0.4cm]
 &\qquad\displaystyle\leq  \frac{\mathcal{H}_0}{\lambda^2(t)} + 2 \eta \int_{\mathbb{R}^2}\left|  \tilde{E}_1\right|^2\left | \tilde{E}_2\right|^2 dx,
\end{array}
\right.\eqno(4.8)
$$
which yields

$$
\left.
\begin{array}{ll}
&\displaystyle\int_{\mathbb{R}^2}\left(\left|\nabla  \tilde{E}_1\right|^2 +\left |\nabla   \tilde{E}_2\right|^2\right)dx - \frac{1+\eta}{2} \int_{\mathbb{R}^2}\left(\left|  \tilde{E}_1\right|^2 +\left | \tilde{E}_2\right|^2\right)^2 dx
\\[0.4cm]
 &\displaystyle\qquad\quad+ \frac{1}{2}\int_{\mathbb{R}^2} \left(n +\left | \tilde{E}_1\right|^2 + \left|  \tilde{E}_2\right|^2\right)^2 + \frac{1}{2} \int_{\mathbb{R}^2} \left| \tilde{\mathbf{v}}\right|^2dx
 \\[0.4cm]
 &\qquad\displaystyle\leq  \frac{\mathcal{H}_0}{\lambda^2(t)}.
\end{array}
\right.\eqno(4.9)
$$

Direct calculation then gives
$$\int_{\mathbb{R}^2}\left(\left|\nabla  \tilde{E}_1\right|^2 + \left|\nabla   \tilde{E}_2\right|^2\right)dx - \frac{1+\eta}{2} \int_{\mathbb{R}^2}\left(\left|  \tilde{E}_1\right|^2 + \left| \tilde{E}_2\right|^2\right)^2 dx\leq  \frac{\mathcal{H}_0}{\lambda^2(t)},\eqno(4.10)$$
$$
\left.
\begin{array}{ll}
&\displaystyle\frac{1}{2}\int_{\mathbb{R}^2} \left(n +\left | \tilde{E}_1\right|^2 + \left|  \tilde{E}_2\right|^2\right)^2 + \frac{1}{2} \int_{\mathbb{R}^2}\left | \tilde{\mathbf{v}}\right|^2dx
\\[0.4cm]
&\displaystyle\qquad\leq \frac{\mathcal{H}_0}{\lambda^2(t)}- \int_{\mathbb{R}^2}\left(\left|\nabla  \tilde{E}_1\right|^2 + \left|\nabla   \tilde{E}_2\right|^2\right)dx
 \\[0.4cm]
&\displaystyle\qquad\quad
 + \frac{1+\eta}{2} \int_{\mathbb{R}^2}\left(\left|  \tilde{E}_1\right|^2 + \left| \tilde{E}_2\right|^2\right)^2 dx
\end{array}
\right.\eqno(4.11)
$$
Note that
$$
\left.
\begin{array}{ll}
&\displaystyle\left( 1 - \frac{(\eta+1) \left(\| E_{10}\|_{L^{2}(\mathbb{R}^2)}^2+\|E_{20}\|_{L^{2}(\mathbb{R}^2)}^2\right)}
{\|Q\|_{L^{2}(\mathbb{R}^2)}^2}\right)
\\[0.5cm]
&\displaystyle\qquad\qquad\qquad\qquad
\cdot\int_{\mathbb{R}^2}\left(\left|\nabla  \tilde{E}_1\right|^2 + \left|\nabla   \tilde{E}_2\right|^2\right)dx
\\[0.4cm]
&\displaystyle\qquad\leq\int_{\mathbb{R}^2}\left(\left|\nabla  \tilde{E}_1\right|^2 + \left|\nabla   \tilde{E}_2\right|^2\right)dx- \frac{1+\eta}{2} \int_{\mathbb{R}^2}\left(\left|  \tilde{E}_1\right|^2 + \left| \tilde{E}_2\right|^2\right)^2 dx,
  \end{array}
\right.\eqno(4.12)
$$
then one obtains
$$
\left.
\begin{array}{ll}
&\displaystyle\frac{1}{2}\int_{\mathbb{R}^2} \left(n + | \tilde{E}_1|^2 + |  \tilde{E}_2|^2\right)^2 + \frac{1}{2} \int_{\mathbb{R}^2} | \tilde{\mathbf{v}}|^2dx
\\[0.3cm]
&\displaystyle\qquad\leq \frac{\mathcal{H}_0}{\lambda^2(t)}
+\left(  \frac{(\eta+1) \left(\|E_{10}\|_{L^{2}(\mathbb{R}^2)}^2+\|E_{20}\|_{L^{2}(\mathbb{R}^2)}^2\right)}
{\|Q\|_{L^{2}(\mathbb{R}^2)}^2}-1\right)
\\[0.8cm]
&\qquad\qquad\qquad\qquad\displaystyle\cdot\int_{\mathbb{R}^2}\left(|\nabla  \tilde{E}_1|^2 + |\nabla   \tilde{E}_2|^2\right)dx.
\end{array}
\right.\eqno(4.13)
$$
Since $\lambda(t)\rightarrow \infty$ as $t\rightarrow T$, taking $t\rightarrow T$ one obtains
$$
\left.
\begin{array}{ll}
&\displaystyle\frac{1}{2} \int_{\mathbb{R}^2} | \tilde{\mathbf{v}}|^2dx
\\[0.5cm]
&\displaystyle\qquad\leq\frac{\eta+1}{\|Q\|_{L^{2}(\mathbb{R}^2)}^2}
\left(\|E_{10}\|_{L^{2}(\mathbb{R}^2)}^2+\|E_{20}\|_{L^{2}(\mathbb{R}^2)}^2
-\frac{1}{1+\eta}\|Q\|_{L^{2}(\mathbb{R}^2)}^2\right)
\\[0.5cm]
&\displaystyle\qquad\qquad\qquad\qquad\qquad\cdot\int_{\mathbb{R}^2}\left(|\nabla  \tilde{E}_1|^2 + |\nabla   \tilde{E}_2|^2\right)dx
\\[0.5cm]
&\displaystyle\qquad\leq\frac{A^{2}(\eta+1)}{\|Q\|_{L^{2}(\mathbb{R}^2)}^2}
\left(\|E_{10}\|_{L^{2}(\mathbb{R}^2)}^2+\|E_{20}\|_{L^{2}(\mathbb{R}^2)}^2
-\frac{1}{1+\eta}\|Q\|_{L^{2}(\mathbb{R}^2)}^2\right).
  \end{array}
\right.\eqno(4.14)
$$
Therefore, by Proposition 3.17, we obtain
$$
\left.
\begin{array}{ll}
&\displaystyle c  \leq \liminf_{t \rightarrow T} \int_0^{\theta(t)} \left\|  \tilde{\mathbf{v}}(s) \right\|_{L^{2}(\mathbb{R}^2)} ds
\\[0.4cm]
&\displaystyle\qquad\leq\frac{A \sqrt{2(\eta+1)}}{\|Q\|_{L^{2}(\mathbb{R}^2)}}
\left(\|E_{10}\|_{L^{2}(\mathbb{R}^2)}^2+\|E_{20}\|_{L^{2}(\mathbb{R}^2)}^2
-\frac{1}{1+\eta}\|Q\|_{L^{2}(\mathbb{R}^2)}^2\right)^{\frac{1}{2}}\theta_{0},
    \end{array}
\right.\eqno(4.15)
$$
and
$$\theta_{0}\geq c'\left(\|E_{10}\|_{L^{2}(\mathbb{R}^2)}^2+\|E_{20}\|_{L^{2}(\mathbb{R}^2)}^2
-\frac{1}{1+\eta}\|Q\|_{L^{2}(\mathbb{R}^2)}^2\right)^{-\frac{1}{2}}.\eqno(4.16)$$
This finishes the proof of Proposition 4.1.
\end{proof}
\indent Using Proposition 4.1 and taking $\theta_0=c'\left(\|E_{10}\|_{L^{2}(\mathbb{R}^2)}^2+\|E_{20}\|_{L^{2}(\mathbb{R}^2)}^2
-\frac{1}{1+\eta}\|Q\|_{L^{2}(\mathbb{R}^2)}^2\right)^{-\frac{1}{2}}$ in the proof of (1.7) and (1.8), we  achieve (1.9) and (1.10).\\
\indent This finishes the proof of Theorem 1.3.\hfill$\Box$

\addcontentsline{toc}{section}{Reference}

\end{document}